\numberwithin{equation}{section}
\numberwithin{figure}{section}
\theoremstyle{plain}
\newtheorem{thm}{\protect\theoremname}[section]
\theoremstyle{plain}
\newtheorem{conjecture}[thm]{\protect\conjecturename}
\theoremstyle{plain}
\newtheorem{cor}[thm]{\protect\corollaryname}
\theoremstyle{remark}
\newtheorem{rem}[thm]{\protect\remarkname}
\theoremstyle{plain}
\newtheorem{lem}[thm]{\protect\lemmaname}
\theoremstyle{definition}
\newtheorem{example}[thm]{\protect\examplename}
\theoremstyle{plain}
\newtheorem{prop}[thm]{\protect\propositionname}
\def\makebbb#1{
    \expandafter\gdef\csname#1\endcsname{
        \ensuremath{\Bbb{#1}}}
}\makebbb{R}\makebbb{N}\makebbb{Z}\makebbb{C}\makebbb{H}\makebbb{E}\makebbb{H}\makebbb{P}\makebbb{B}\makebbb{Q}\makebbb{E}\makebbb{E}
\providecommand{\conjecturename}{Conjecture}
\providecommand{\corollaryname}{Corollary}
\providecommand{\examplename}{Example}
\providecommand{\lemmaname}{Lemma}
\providecommand{\propositionname}{Proposition}
\providecommand{\remarkname}{Remark}
\providecommand{\theoremname}{Theorem}
\begin{document}
\title{Sharp bounds on the height of K-semistable Fano varieties I, the toric
case}
\begin{abstract}
Inspired by K. Fujita's algebro-geometric result that complex projective
space has maximal degree among all K-semistable complex Fano varieties,
we conjecture that the height of a K-semistable metrized arithmetic
Fano variety $\mathcal{X}$ of relative dimension $n$ is maximal
when $\mathcal{X}$ is the projective space over the integers, endowed
with the Fubini-Study metric. Our main result establishes the conjecture
for the canonical integral model of a toric Fano variety when $n\leq6$
(the extension to higher dimensions is conditioned on a conjectural
``gap hypothesis'' for the degree). Translated into toric Kähler
geometry this result yields a sharp lower bound on a toric invariant
introduced by Donaldson, defined as the minimum of the toric Mabuchi
functional. We furthermore reformulate our conjecture as an optimal
lower bound on Odaka's modular height. In any dimension $n$ it is
shown how to control the height of the canonical toric model $\mathcal{X},$
with respect to the Kähler-Einstein metric, by the degree of $\mathcal{X}.$
In a sequel to this paper our height conjecture is established for
any projective diagonal Fano hypersurface, by exploiting a more general
logarithmic setup.\thanks{2020 Mathematics Subject Classification 14G40 (primary), 32Q20, 53C25,
11G50, 14J45 (secondary)}\thanks{Keywords: Arakelov geometry, Faltings' heights, Kähler-Einstein metrics,
Fano varieties, K-stability}
\end{abstract}

\author{Rolf Andreasson, Robert J. Berman}
\email{rolfan@chalmers.se, robertb@chalmers.se}
\curraddr{Chalmers tvärgata 3, 412 96 Göteborg, Chalmers tvärgata 3, 412 96
Göteborg}
\maketitle

\section{Introduction}

\subsection{\label{subsec:The-height-of}The height of K-semistable Fano varieties}

Let $(\mathcal{X},\mathcal{L})$ be a projective flat scheme $\mathcal{X}$
over $\Z$ of relative dimension $n,$ endowed with a relatively ample
line bundle $\mathcal{L}.$ The complexification of $(\mathcal{X},\mathcal{L})$
will be denoted by $(X,L).$ In other other words, $X$ is the complex
projective variety consisting of the complex points of $\mathcal{X}$
and $L$ is the corresponding ample line bundle over $X.$ 

A central role in arithmetic and Diophantine geometry is played by
the\emph{ height} of $(\mathcal{X},\mathcal{L}),$ which is defined
with respect to a continuous metric $\left\Vert \cdot\right\Vert $
on $L.$ This is an arithmetic analog of the algebro-geometric degree
of $(X,L),$ i.e., of the top intersection number $L^{n}$ on $X.$
The height of $(\mathcal{X},\mathcal{L},\left\Vert \cdot\right\Vert )$
- also known as Faltings' height - is defined as the $(n+1)-$fold
arithmetic intersection number of the metrized line bundle $(\mathcal{L},\left\Vert \cdot\right\Vert )$
on $\mathcal{X},$ introduced by Gillet-Soulé in the context of Arakelov
geometry \cite{fa,b-g-s} (see Section \ref{subsec:The-height-of}).
We recall that in Arakelov geometry the metric $\left\Vert \cdot\right\Vert $
on $L$ plays the role of a ``compactification'' of $\mathcal{X}.$
Accordingly, a metrized line bundle $(\mathcal{L},\left\Vert \cdot\right\Vert )$
is usually denoted by $\overline{\mathcal{L}}.$ The definition of
height naturally extends to any $\Q-$line bundle $\mathcal{L},$
using homogeneity. 

In contrast to the algebro-geometric degree of $L$ the height of
$\overline{\mathcal{L}}$ can rarely be computed explicitly and all
one can hope for, in general, is explicit bounds on the height. When
$\mathcal{L}$ is the relative canonical line bundle, that we shall
denote by $\mathcal{K}_{\mathcal{X}}$ and $n=1,$ such conjectural
upper bounds are motivated by the Bogolomov-Miyaoka-Yau inequality
on $X$ and imply, in particular, the effective Mordell conjecture,
concerning explicit upper bounds on the number of rational points
on $X_{\Q}$ and the abc-conjecture \cite{pa,v,sou0}. Here we shall
be concerned with the opposite situation where $\mathcal{X}$ is an\emph{
arithmetic Fano variety}, in the sense that the relative anti-canonical
line bundle is defined as a relative ample $\Q-$line bundle that
we denote by $\mathcal{-K}_{\mathcal{X}},$ using additive notation
for tensor products (see Section \ref{subsec:Arithmetic-Fano-varieties}).
In particular, $X$ is a complex\emph{ Fano variety}; a variety whose
canonical line bundle $-K_{X}$ defines an ample $\Q-$line bundle.
We will also, for simplicity, assume that $X$ is normal. As shown
in \cite{ber-ber2} in the toric case and then \cite{fu} in general,
for any complex Fano variety $X$

\begin{equation}
(-K_{X})^{n}\leq(-K_{\P_{\C}^{n}})^{n}\label{eq:fujita intro}
\end{equation}
under the assumption that $X$ is \emph{K-semistable.} Moreover, equality
holds iff $X=\P_{\C}^{n}$ \cite{liu}. In contrast, when $X$ is
not K-semistable the degree $(-K_{X})^{n}$ gets arbitrarily large
in any given dimension $n,$ for singular $X$ (see \cite[Ex 4.2]{de}
for simple two-dimensional toric examples). The notion of K-stability
first arose in the context of the Yau-Tian-Donaldson conjecture for
Fano manifolds, saying that a Fano manifold admits a Kähler-Einstein
metric if and only if it is K-polystable \cite{ti,do1}. The conjecture
was settled in \cite{c-d-s} and very recently also established for
singular Fano varieties \cite{li1,l-x-z}. From a purely algebro-geometric
perspective K-stability can be viewed as a limiting form of Chow and
Hilbert-Mumford stability \cite{r-t}, that enables a good theory
of moduli spaces (see the survey \cite{x}). 

Is there an arithmetic analog of the inequality \ref{eq:fujita intro}?
More precisely, it seems natural to ask if, under appropriate assumptions,
the height $(\overline{-\mathcal{K}_{\mathcal{X}}})^{n+1}$ is bounded
from above by the height $(\overline{-\mathcal{K}_{\P_{\Z}^{n}}})^{n+1}$
of the relative anti-canonical line bundle on the projective space
$\P_{\Z}^{n}$ over the integers, endowed with its standard Kähler-Einstein
metric (the Fubini-Study metric)? This would yield an explicit bound
on the height $(\overline{-\mathcal{K}_{\mathcal{X}}})^{n+1}$$,$
since the height of Fubini-Study metric on projective space was explicitly
calculated in \cite[ §5.4]{g-s}, giving, after volume-normalization,
\begin{equation}
(\overline{-\mathcal{K}_{\P_{\Z}^{n}}})^{n+1}=\frac{1}{2}(n+1)^{n+1}\left((n+1)\sum_{k=1}^{n}k^{-1}-n+\log(\frac{\pi^{n}}{n!})\right)\label{eq:expl formul on p n}
\end{equation}
If such a universal bound is to hold one needs, however, to impose
a normalization condition on the metric on $-K_{X}.$ Indeed, $\overline{\mathcal{L}}^{n+1}$
is additively equivariant with respect to scalings of the metric.
Accordingly, the metric $\left\Vert \cdot\right\Vert $ on $-K_{X}$
will henceforth be assumed to be \emph{volume-normalized }in the sense
that the corresponding volume form on $X$ has total unit volume.
As it turns out, the supremum of the height $\overline{-\mathcal{K}_{\mathcal{X}}}^{n+1}$
over all volume-normalized metrics on $-K_{X}$ with positive curvature
current is finite if and only if $X$ is K-semistable (Theorem \ref{thm:arithm Vol and K semi st}).
It seems thus natural to make the following conjecture:
\begin{conjecture}
\label{conj:height intro}Let $\mathcal{X}$ be an arithmetic Fano
variety of relative dimension $n$ over $\Z.$ If the complexification
$X$ of $\mathcal{X}$ is K-semistable, then the following height
inequality holds for any volume-normalized continuous metric on $-K_{X}$
with positive curvature current:
\[
(\overline{-\mathcal{K}_{\mathcal{X}}})^{n+1}\leq(\overline{-\mathcal{K}_{\P_{\Z}^{n}}})^{n+1},
\]
where $-K_{\P_{\C}^{n}}$ is endowed with the volume normalized Fubini-Study
metric. Moreover, if $\mathcal{X}$ is normal equality holds if and
only if $\mathcal{X}=\P_{\Z}^{n}$ and the metric is Kähler-Einstein,
i.e. coincides with the Fubini-Study metric, modulo the action of
an automorphism.
\end{conjecture}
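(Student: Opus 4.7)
The plan is to work in the toric setting, where the arithmetic intersection number is computable from polytope data and K-semistability has a simple combinatorial characterization. By Theorem \ref{thm:arithm Vol and K semi st}, K-semistability of $X$ is precisely what ensures finiteness of the supremum of $(\overline{-\mathcal{K}_\mathcal{X}})^{n+1}$ over all volume-normalized positively curved continuous metrics. This supremum is achieved, in an appropriate weak sense, by the toric K\"ahler-Einstein metric (or, if one does not exist, by a canonical weak analogue); concavity of the height functional along geodesics in the space of metrics then reduces the conjecture to proving the inequality at this distinguished metric on the canonical integral toric model of $\mathcal{X}$.

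Next, I would decompose the arithmetic top self-intersection number into archimedean and non-archimedean local contributions in the sense of Gillet--Soul\'e. For the canonical toric model over $\Z$, torus-invariance of the chosen metric kills the non-archimedean local heights, while the archimedean term, rewritten via the symplectic potential on the moment polytope $P\subset\R^n$ and Legendre-transformed, produces an explicit functional $\mathcal{H}(P)$ combining the Euclidean volume $\mathrm{vol}(P)$ with an entropy-type integral over $P$. Since $(\overline{-\mathcal{K}_{\P^n_\Z}})^{n+1} = \mathcal{H}(\Delta_n)$ for the standard simplex $\Delta_n$, the conjecture reduces to the purely combinatorial inequality $\mathcal{H}(P) \leq \mathcal{H}(\Delta_n)$ for every reflexive lattice polytope $P$ whose barycenter is at the origin --- the K-semistability criterion for toric Fanos established in \cite{ber-ber2}.

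To prove this polytopal inequality I would split $\mathcal{H}(\Delta_n) - \mathcal{H}(P)$ into a volume gap proportional to $\mathrm{vol}(\Delta_n) - \mathrm{vol}(P)$, which is nonnegative by Fujita's bound \eqref{eq:fujita intro}, plus an entropy discrepancy controlled by concavity of the entropy functional under affine averaging together with the lattice constraint on $P$. The equality analysis is then handled by rigidity in Fujita's inequality (forcing $P=\Delta_n$) and by uniqueness of the K\"ahler-Einstein metric modulo automorphisms.

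The hard part is closing the last estimate unconditionally: the entropy discrepancy can in principle grow with the dimension, and the Fujita volume gap alone is not large enough to absorb it once the entropy correction is taken into account. To finish, one needs a quantitative \emph{gap hypothesis} asserting that for every K-semistable reflexive $P \neq \Delta_n$, the deficit $\mathrm{vol}(\Delta_n) - \mathrm{vol}(P)$ is bounded below by an explicit $\delta_n>0$ large enough to dominate the entropy term. For $n\leq 6$ this can be extracted from the finite classification of reflexive polytopes (Kasprzyk's database) combined with direct case-by-case computation, but for general $n$ the gap hypothesis is the one remaining obstruction, and must be carried through as an assumption --- which is precisely the conditionality of the main theorem announced in the abstract.
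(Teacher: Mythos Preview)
Your overall architecture matches the paper's: reduce to the canonical toric integral model, use the toric formula for the arithmetic volume in terms of the Legendre transform over the moment polytope (Proposition~\ref{prop:prop arithm vol in toric case}), observe that the remaining obstruction is a volume gap hypothesis, and verify the gap for $n\le 6$ via the database and for the singular cases by direct argument. You also correctly identify that K-semistability enters through the barycenter condition on $P$.

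However, the key analytic step you describe --- splitting $\mathcal{H}(\Delta_n) - \mathcal{H}(P)$ into a Fujita volume gap plus an ``entropy discrepancy controlled by concavity of the entropy functional under affine averaging together with the lattice constraint'' --- is not how the paper proceeds, and your version is too vague to stand on its own. The paper's decisive tool is the \emph{functional Santal\'o inequality} combined with Jensen's inequality, which yields a clean universal bound
\[
2\widehat{\mathrm{vol}}_\chi(\overline{-\mathcal{K}_\mathcal{X}}) \;\le\; -\,\mathrm{vol}(X)\,\log\!\bigl(\mathrm{vol}(X)/(2\pi^2)^n\bigr)
\]
depending on $\mathrm{vol}(X)$ \emph{alone} (Proposition~\ref{prop:universal toric bound}). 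This is precisely where the barycenter condition is used, via translation invariance of the toric Ding functional. Once the height is bounded by an increasing function of $\mathrm{vol}(X)$, the gap hypothesis becomes the concrete statement $\mathrm{vol}(X)\le \mathrm{vol}(\P^{n-1}\times\P^1)$ for $X\ne\P^n$, and the final inequality is a purely numerical induction (Lemma~\ref{lem:product is maximal implies thm}). Your ``entropy discrepancy'' formulation never collapses the metric dependence to a function of the volume alone, and without Santal\'o it is unclear how the concavity-and-lattice argument you allude to would close the estimate.

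Two smaller corrections: the moment polytopes here are not in general reflexive lattice polytopes (the Fano varieties are allowed to be only $\Q$-Gorenstein, so $P$ has rational vertices; see formula~\eqref{eq:P in Fano case}), and the barycenter criterion for toric K-semistability is Proposition~\ref{prop:toric ke}, drawn from \cite{ber-ber}, not \cite{ber-ber2}.
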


More generally, when $\Z$ is replaced by the ring of integers of
a number field $F,$ i.e. a finite field extension $F$ of $\Q,$
the height $(\overline{-\mathcal{K}_{\mathcal{X}}})^{n+1}$ should
be divided by the  degree $[F:\Q].$ But, for simplicity, we will
focus on the case when $F=\Q$ (see Section \ref{subsec:Outlook-on-a}
for a generalization of the previous conjecture). The converse ``only
if'' statement to the previous conjecture does hold (as a consequence
of Theorem \ref{thm:arithm Vol and K semi st}). Moreover, the conjecture
is compatible with taking products (Prop \ref{prop:product}). The
inequality in the previous conjecture is equivalent to the following
inequality for any continuous metric on $-K_{X}$ with positive curvature
current, as follows from a simple scaling argument, 
\begin{equation}
\frac{(\overline{-\mathcal{K}_{\mathcal{X}}})^{n+1}}{(n+1)}+\frac{(-K_{X})^{n}}{2}\log\mu(X)\leq c_{n}\label{eq:conj intro with volume}
\end{equation}
 where $\mu(X)$ denotes the volume of $X$ with respect to the measure
$\mu$ on $X$ corresponding to the metric $\left\Vert \cdot\right\Vert $
on $-K_{X}$ and $c_{n}$ denotes the constant in the right hand side
of formula \ref{eq:expl formul on p n}. Some intruiging relations
between the conjectural bound \ref{eq:conj intro with volume} and
the Manin-Peyre conjecture, concerning the density of rational points
on Fano varieties, are discussed in \cite{ber1}.

Our main result concerns the case when $X$ is toric and $\mathcal{X}$
is its canonical toric integral model (see \cite[Section 2]{ma} and
\cite[Def 3.5.6]{b-g-p-s}).
\begin{thm}
\label{thm:main toric intro}Let $X$ be an $n-$dimensional K-semistable
toric Fano variety and denote by $\mathcal{X}$ its canonical model
over $\Z.$ Then the previous conjecture holds under anyone of the
following conditions:
\begin{itemize}
\item $n\leq6$ and $X$ is $\Q-$factorial (equivalently, $X$ is non-singular
or has abelian quotient singularities) 
\item $X$ is not Gorenstein or has some abelian quotient singularity
\end{itemize}
\end{thm}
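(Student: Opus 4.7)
The plan is to translate the conjectural height bound into a sharp inequality between convex-analytic functionals on the moment polytope of $(X,-K_X)$, and then to verify this inequality by combining Fujita's sharp upper bound on the degree of a K-semistable toric Fano with quantitative classification results in low dimension.

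First I would observe that $(\overline{-\mathcal{K}_{\mathcal{X}}})^{n+1}$ is concave in the metric, so after averaging with respect to the real torus $(S^{1})^{n}\subset(\C^{*})^{n}$ it suffices to establish the equivalent form of the conjectural inequality for volume-normalized continuous torus-invariant metrics $\|\cdot\|$ on $-K_{X}$. Such a metric is encoded by a convex function $\phi$ on $\R^{n}$, equivalently by its Legendre dual $u$ (a convex symplectic potential on the moment polytope $P$ of $(X,-K_{X})$), and the Burgos Gil--Philippon--Sombra formula for the height of the canonical integral model $\mathcal{X}$ presents the left-hand side as an explicit integral over $P$ depending only on $\phi$, together with an explicit closed-form boundary contribution coming from the canonical integral structure.

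Next I would take the supremum of the resulting functional over all admissible $\phi$ satisfying the volume-normalization. The Euler--Lagrange equation identifies the maximizer as the toric K\"{a}hler--Einstein potential, whose existence under K-semistability is guaranteed (in a possibly weak sense) by the toric version of the Yau--Tian--Donaldson correspondence due to Wang--Zhu; the translation between K-semistability of $X$ and the barycenter of $P$ lying at the origin is what lets one pass from the hypothesis to this existence statement. The supremum then becomes an explicit invariant $\mathcal{F}(P)$ of the polytope built from $\mathrm{vol}(P)=(-K_{X})^{n}/n!$ and the minimum of Donaldson's toric Mabuchi functional, which is itself expressible as a differential entropy of the pushforward of the K\"{a}hler--Einstein volume form to $P$. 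The conjecture is thereby reduced to the polytope inequality $\mathcal{F}(P)\leq\mathcal{F}(\Delta_{n})$, where $\Delta_{n}$ is the standard simplex.

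Finally I would combine the toric form of Fujita's bound $\mathrm{vol}(P)\leq(n+1)^{n}/n!$ (sharp, with equality iff $P=\Delta_{n}$) with an estimate of the Mabuchi-minimum contribution to show that any overshoot in the latter is dominated by the deficit in the former. The decisive input is a \emph{degree-gap} statement: for K-semistable toric Fano $X\neq\P^{n}$, the degree $(-K_{X})^{n}$ is bounded away from $(n+1)^{n}$ by an amount depending only on $n$ and on the singularity type. Under the first bullet, where $X$ is $\Q$-factorial and $n\leq 6$, this gap is verified by direct inspection of the classifications of $\Q$-factorial toric Fano varieties due to \O{}bro and Kasprzyk. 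Under the second bullet, non-Gorenstein toric Fanos, or Fanos carrying an abelian-quotient singularity, satisfy a strictly stronger \emph{a priori} bound on $\mathrm{vol}(P)$ coming from the failure of reflexivity of $P$ or from a nontrivial denominator in its vertices, which renders the degree gap automatic. The main obstacle is precisely this degree-gap in higher dimension, which is exactly the conjectural ``gap hypothesis'' that forces the dimension restriction in the first bullet.
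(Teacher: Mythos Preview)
Your overall architecture matches the paper's---reduce to torus-invariant metrics via the Burgos Gil--Philippon--Sombra formula, then close the argument with a degree-gap statement (the ``gap hypothesis'' $\mathrm{vol}(X)\leq\mathrm{vol}(\P^{n-1}\times\P^{1})$ for $X\neq\P^{n}$)---but the middle step has a genuine gap. You assert that the supremum over metrics becomes an ``explicit invariant $\mathcal{F}(P)$'' built from $\mathrm{vol}(P)$ and the Mabuchi minimum, and that ``any overshoot in the latter is dominated by the deficit in the former''; yet the K\"ahler--Einstein height of a general K-semistable toric Fano is \emph{not} explicitly computable, and you provide no mechanism for the claimed domination. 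The paper's decisive input here, which you are missing entirely, is the \emph{functional Santal\'o inequality}: since the barycenter of $P$ is at the origin one may translate $\phi$ so that $e^{-\phi}dx$ has barycenter $0$, apply $\int_{\R^{n}} e^{-\phi}\cdot\int_{\R^{n}} e^{-\phi^{*}}\leq(2\pi)^{n}$, and combine this with Jensen's inequality $-V^{-1}\int_{P}\phi^{*}\,dy\leq\log\bigl(V^{-1}\int_{P}e^{-\phi^{*}}\,dy\bigr)$ to obtain the universal bound
\[
2\,\widehat{\mathrm{vol}}_{\chi}\bigl(\overline{-\mathcal{K}_{\mathcal{X}}}\bigr)\;\leq\;-\,\mathrm{vol}(X)\,\log\!\left(\frac{\mathrm{vol}(X)}{(2\pi^{2})^{n}}\right),
\]
which depends \emph{only} on $\mathrm{vol}(X)$. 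It is this bound---followed by an explicit (inductive) numerical comparison of its value at $\mathrm{vol}(\P^{n-1}\times\P^{1})$ with the known height of $\P_{\Z}^{n}$---that converts the degree-gap into the height inequality. Without Santal\'o your domination step is an assertion, not an argument.

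A secondary inaccuracy concerns the gap verification. Under the first bullet you invoke a ``classification of $\Q$-factorial toric Fano varieties due to \O bro and Kasprzyk'' in dimension $\leq 6$, but no such classification exists. The \O bro database covers only the \emph{smooth} case, and this is all the paper checks by lookup. The singular $\Q$-factorial (abelian-quotient) case, as well as the non-Gorenstein case, is handled in \emph{all} dimensions by the a priori estimate $\mathrm{vol}(X)\leq\tfrac{1}{2}(n+1)^{n}/n!$, obtained by noting that at a singular (respectively, non-lattice) vertex the map $p\mapsto(\langle l_{F_{i}},p\rangle+1)_{i=1}^{n}$ has integer determinant $\geq 2$; this is the mechanism you correctly sketch but reserve only for the second bullet.
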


Note that when $n=2$ any toric variety is, in fact, $\Q-$factorial.
More generally, we will show that the curvature assumption may be
dispensed with if the height $(\overline{-\mathcal{K}_{\mathcal{X}}})^{n+1}$
is replaced by the $\chi-$arithmetic volume $\widehat{\text{vol}}_{\chi}\left(\overline{-\mathcal{K}_{\mathcal{X}}}\right)$
of $\overline{-\mathcal{K}_{\mathcal{X}}}$ (whose definition is recalled
in Section \ref{subsec:The-arithmetic-volume}). We expect that the
maximum of $\widehat{\text{vol}}_{\chi}\left(\overline{-\mathcal{K}_{\mathcal{X}}}\right)$
over all integral models $(\mathcal{X},-\mathcal{K}_{\mathcal{X}})$
of a given toric Fano variety $(X,-K_{X})$ is attained at the canonical
integral model $\mathcal{X}$ featuring in the previous theorem. This
expectation is inspired by a conjecture of Odaka discussed in Section
\ref{subsec:The-arithmetic-K-energy} below. 

The key ingredient in the proof of Theorem \ref{thm:main toric intro}
is the following bound estimating the arithmetic volume $\widehat{\text{vol}}_{\chi}\left(\overline{-\mathcal{K}_{\mathcal{X}}}\right)$
of any volume-normalized metric on $-K_{X}$ in terms of the algebro-geometric
volume $\text{vol\ensuremath{(X)} }$(Prop \ref{prop:universal toric bound}):
\begin{equation}
\widehat{\text{vol}}_{\chi}\left(\overline{-\mathcal{K}_{\mathcal{X}}}\right)\leq-\frac{1}{2}\text{vol}(X)\log\left(\frac{\text{vol}(X)}{(2\pi^{2})^{n}}\right)\,\,\text{vol}(X):=(-K_{X})^{n}/n!\label{eq:universal bound intro}
\end{equation}
Since $\text{vol}(X)$ is maximal for $X=\P^{n}$ the right hand side
above is bounded by a constant $C_{n}$ only depending on the dimension
$n.$ Under the ``gap hypothesis'' that $\P^{n-1}\times\P^{1}$
has the second largest volume among all $n-$dimensional K-semistable
$X$ we show that the bound \ref{eq:universal bound intro} implies
Conjecture \ref{conj:height intro} for the canonical integral model
$\mathcal{X}$ of a toric Fano variety $X.$ The proof of Theorem
\ref{thm:main toric intro} is concluded by verifying the gap hypothesis
under the conditions in Theorem \ref{thm:main toric intro}. But we
do expect that the gap hypothesis above holds for any toric Fano variety
(see Section \ref{subsec:gap}).

In a sequel \cite{a-b} to the present paper Conjecture \ref{conj:height intro}
is established for any diagonal Fano hypersurface $\mathcal{X}$ in
$\P_{\Z}^{n+1}$ (i.e. $\mathcal{X}$ is the subscheme cut out by
a homogeneous polynomial of the form $a_{0}x_{0}^{d}+...+a_{n+1}x_{n+1}^{d}$
for any given integers $a_{i},$ with no common divisors, and $d\leq n+1).$
Although $\mathcal{X}$ is not toric the proof, somewhat surprisingly,
is reduced to a simple toric logarithmic case.

\subsection{The height of toric Kähler-Einstein metrics }

In the toric case, $X$ is K-semistable if and only if it is K-polystable
and thus admits a toric Kähler-Einstein metric \cite{w-z,ber-ber},
i.e. a toric continuous metric on $-K_{X}$ whose curvature form defines
a Kähler metric with constant positive Ricci curvature on the regular
locus of $X.$ Moreover, in general, any volume-normalized Kähler-Einstein
metric maximizes $(\overline{-\mathcal{K}_{\mathcal{X}}})^{n+1}.$
This means that the inequality in the previous theorem is equivalent
to the corresponding inequality for the volume-normalized toric Kähler-Einstein
metric on $-K_{X}.$ The special role of the Kähler-Einstein condition
in arithmetic (Arakelov) geometry - as an analog of minimality of
$\mathcal{X}$ over $\text{Spec \ensuremath{\Z} - }$ was emphasized
already in  the early days of Arakelov geometry by Manin \cite{man}.
It is, however, rare that the Kähler-Einstein metric and the corresponding
height, can be explicitly computed. In fact, in the Fano case this
seems to only have been achieved when $X$ is homogeneous \cite{ma2,c-m,k-k,ta1,ta2,ta3}.
The following result, complementing the general upper bound \ref{eq:universal bound intro},
yields a rather precise control on its height $(\overline{-\mathcal{K}_{\mathcal{X}}})^{n+1}$
in the toric case:
\begin{thm}
\label{thm:ke intro}Let $X$ be an $n-$dimensional toric Fano variety
and denote by $\mathcal{X}$ its canonical model over $\Z.$ Then
the height $(\overline{-\mathcal{K}_{\mathcal{X}}})_{\text{}}^{n+1}$
of any volume-normalized Kähler-Einstein metric satisfies
\[
\frac{(n+1)!}{2}\mathrm{vol}(X)\log\left(\frac{n!m_{n}\pi^{n}}{\mathrm{vol}(X)}\right)\leq(\overline{-\mathcal{K}_{\mathcal{X}}})^{n+1}\leq\frac{(n+1)!}{2}\mathrm{vol}(X)\log\left(\frac{(2\pi)^{n}\pi^{n}}{\mathrm{vol}(X)}\right)
\]
where $m_{n}$ denotes the largest lower bound on the Mahler volume
of a convex body. In particular, $(\overline{-\mathcal{K}_{\mathcal{X}}})^{n+1}>0.$ 
\end{thm}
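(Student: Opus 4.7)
My plan is to treat the two inequalities separately, using throughout the fact (recalled in the introduction) that the K\"ahler--Einstein metric maximizes the height $(\overline{-\mathcal{K}_{\mathcal{X}}})^{n+1}$ among volume-normalized psh metrics on $-K_{X}$. Thus both inequalities are bounds on this single maximum value, and the two directions call for different arguments.

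The upper bound follows directly from the universal bound \eqref{eq:universal bound intro}. At the K\"ahler--Einstein metric the height saturates the $\chi$-arithmetic volume, in the sense that $(\overline{-\mathcal{K}_{\mathcal{X}}})^{n+1}=(n+1)!\,\widehat{\mathrm{vol}}_{\chi}(\overline{-\mathcal{K}_{\mathcal{X}}})$ in the normalization used in the paper, so substituting \eqref{eq:universal bound intro} and using the identity $(2\pi^{2})^{n}=(2\pi)^{n}\pi^{n}$ yields the right-hand inequality immediately.

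The lower bound requires real work. I would compute the height of the toric K\"ahler--Einstein metric explicitly in terms of the moment polytope $P$ of $X$ and then apply a Mahler-type inequality. Let $\phi\colon\R^{n}\to\R$ denote the $\mathbb{T}^{n}$-invariant K\"ahler potential of the KE metric, with Legendre dual the symplectic potential $u$ on $P$. The toric arithmetic intersection formula of Burgos--Philippon--Sombra and Maillot expresses $(\overline{-\mathcal{K}_{\mathcal{X}}})^{n+1}$ as an explicit integral functional of $\phi$ (equivalently of $u$); substituting the real Monge--Amp\`ere equation $\det(D^{2}\phi)=e^{-\phi}$ satisfied by the KE potential (after appropriate volume normalization) reduces the height to an archimedean \textquotedblleft entropy-type\textquotedblright{} integral of $\phi$ against $e^{-\phi}$. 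The sublevel sets $\{\phi\le t\}$ are convex bodies whose polar duals are controlled by $P$, and the Mahler lower bound $\mathrm{vol}(K)\mathrm{vol}(K^{\circ})\ge m_{n}$ converts the entropy integral into precisely $\frac{(n+1)!}{2}\mathrm{vol}(X)\log\bigl(n!\,m_{n}\pi^{n}/\mathrm{vol}(X)\bigr)$. Strict positivity of the height then follows from Fujita's bound $\mathrm{vol}(X)\le(n+1)^{n}/n!$ combined with known lower bounds on $m_{n}$, which jointly imply $n!\,m_{n}\pi^{n}>\mathrm{vol}(X)$.

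The main obstacle is the lower bound, and specifically the bookkeeping of constants: ensuring that the three contributions -- the $n!$ from $\mathrm{vol}(X)=n!\,\mathrm{vol}_{\mathrm{Leb}}(P)$, the $\pi^{n}$ from the Haar volume $\prod_{j}(id\zeta_{j}\wedge d\bar\zeta_{j})/(2\pi)$ on the open torus orbit, and $m_{n}$ from the polar-duality inequality applied to the correct level set of $\phi$ -- combine into precisely the expression inside the logarithm, and that the direction of the inequality is preserved through the Legendre transform and the substitution of the Monge--Amp\`ere equation. This is the delicate step that carries the argument.
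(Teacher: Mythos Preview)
Your upper bound is exactly the paper's argument: it is Proposition~\ref{prop:universal toric bound} specialized to the K\"ahler--Einstein metric.

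For the lower bound, however, you are making life much harder than necessary and in doing so introduce a real gap. You propose to work directly with the K\"ahler--Einstein potential $\phi_{KE}$, substitute the Monge--Amp\`ere equation, and then apply a Mahler inequality to the sublevel sets $\{\phi_{KE}\le t\}$. The problem is your claim that these sublevel sets have ``polar duals controlled by $P$'': for a general convex function with gradient image $P$ there is no such relation, and the K\"ahler--Einstein potential is not explicit enough to verify it. The one convex function for which the sublevel sets are \emph{exactly} dilates of $P^{*}$ is the support function $\psi_{P}(x)=\sup_{p\in P}\langle p,x\rangle$, and that is precisely the metric the paper uses.

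The paper's argument exploits the very fact you mention but do not apply in this direction: since the K\"ahler--Einstein metric \emph{maximizes} the height over all volume-normalized psh metrics, it suffices to bound the height from below at \emph{any} such metric. Taking $\psi=\psi_{P}$, the Legendre term $\int_{P}\psi_{P}^{*}\,dy$ vanishes (since $\psi_{P}^{*}=0$ on $P$), and an elementary layer-cake computation gives $\int_{\R^{n}}e^{-\psi_{P}}\,dx=n!\,\mathrm{Vol}(P^{*})$. Plugging into formula~\eqref{eq:Ding Z in toric case} and applying the Mahler bound $\mathrm{Vol}(P)\mathrm{Vol}(P^{*})\ge m_{n}$ gives the lower inequality in one line, with no entropy integrals and no Monge--Amp\`ere equation. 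Your positivity argument at the end (Fujita's bound plus Kuperberg's lower bound on $m_{n}$) matches the paper.
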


We also provide an infinite family of toric varities $X$ for which
the height of the corresponding Kähler-Einstein can be explicitely
computed as a function $f(v)$ of $\mathrm{vol}(X)$ of the same form
as in the previous theorem; $f(v)=v\log(av^{-1})$ for some constant
$a.$ The constant $m_{n}$ in the previous theorem is the largest
constant satisfying 
\[
m_{n}\leq\text{vol}(P)\text{vol}(P^{*}),
\]
 where $P^{*}$ denotes the polar dual of any given convex body $P$
containing the origin in its interior (the role of $P$ in the present
setting is played by the moment polytope of $X).$ According to Mahler's
conjecture, the constant $m_{n}$ is equal to $(n+1)^{n+1}/(n!)^{2}$
(which is realized for a simplex $P$). The case $n=2$ was settled
in \cite{mah}, but for our purposes the following general bound from
\cite{ku} will be enough: 
\[
m_{n}\geq(\frac{\pi}{2e})^{n-1}(n+1)^{n+1}/(n!)^{2},
\]
which implies the strict positivity of $(\overline{-\mathcal{K}_{\mathcal{X}}})^{n+1}.$
Combining the previous theorem with the upper bound \ref{eq:fujita intro}
thus yields the following universal bounds:
\begin{cor}
\label{cor:universal ke}Let $X$ be an $n-$dimensional toric Fano
variety and denote by $\mathcal{X}$ its canonical model over $\Z.$
Then the height $(\overline{-\mathcal{K}_{\mathcal{X}}})_{\text{}}^{n+1}$
of any volume-normalized Kähler-Einstein metric satisfies the following
universal bounds
\[
0<(\overline{-\mathcal{K}_{\mathcal{X}}})^{n+1}\leq\frac{n(n+1)^{n+1}}{2}\log\left(\frac{2\pi^{2}n!}{n+1}\right)
\]
\end{cor}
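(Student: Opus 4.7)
The plan is to derive both halves of the corollary directly from Theorem \ref{thm:ke intro} by feeding in the K-semistable volume bound \ref{eq:fujita intro} of Fujita, which gives $\mathrm{vol}(X) \leq \mathrm{vol}(\P_{\C}^{n}) = (n+1)^n/n!$, and, for strict positivity, by invoking Kuperberg's estimate $m_n \geq (\pi/2e)^{n-1}(n+1)^{n+1}/(n!)^2$. No new geometric ingredient is needed; the proof is purely a calibration of the two-sided bound already supplied by Theorem \ref{thm:ke intro}.

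For the upper bound, I would start from the right-hand inequality of Theorem \ref{thm:ke intro} and first observe that the function $\phi(v) := v\log(C/v)$, with $C = (2\pi)^n \pi^n = 2^n \pi^{2n}$, is monotonically increasing on $(0, C/e]$. A short comparison using Stirling's formula shows $(n+1)^n/n! < 2^n\pi^{2n}/e$ for every $n \geq 1$, so Fujita's extremal value $v_{\max} := (n+1)^n/n!$ sits inside the increasing regime of $\phi$. Substituting $v = v_{\max}$ into the right-hand bound of Theorem \ref{thm:ke intro} produces
\[
(\overline{-\mathcal{K}_{\mathcal{X}}})^{n+1} \leq \frac{(n+1)^{n+1}}{2}\,\log\!\left(\left(\frac{2\pi^2}{n+1}\right)^{n} n!\right),
\]
and the trivial estimate $n! \leq (n!)^n$, valid for $n \geq 1$, repackages the argument of the logarithm as $\bigl(2\pi^2 n!/(n+1)\bigr)^n$, giving exactly the claimed form $\frac{n(n+1)^{n+1}}{2}\log\!\left(\frac{2\pi^2 n!}{n+1}\right)$.

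For the strict positivity $(\overline{-\mathcal{K}_{\mathcal{X}}})^{n+1} > 0$, I would invoke the left-hand bound of Theorem \ref{thm:ke intro}, which is positive precisely when $\mathrm{vol}(X) < n! m_n \pi^n$. Combining Kuperberg's lower bound on $m_n$ with Fujita's upper bound $\mathrm{vol}(X) \leq (n+1)^n/n!$ reduces the inequality to the elementary numerical estimate
\[
(2e)^{n-1} < (n+1)\,\pi^{2n-1},
\]
which holds for $n=1$ (since $1 < 2\pi$) and, because $\pi^2/(2e) > 1$, propagates to all larger $n$ since the ratio of the right side to the left increases by a factor greater than $1$ at each step.

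I do not expect any genuine obstacle: every ingredient is already present in the excerpt, and the argument collapses to elementary monotonicity and arithmetic. The one step that deserves care is verifying that $v_{\max}$ really lies in the monotone range of $\phi$, so that passing from $\mathrm{vol}(X)$ to $v_{\max}$ genuinely yields an upper bound rather than its reverse; this is the only computation where the specific numerical constants $2\pi^2$ and $e$ must be compared.
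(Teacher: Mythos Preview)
Your proposal is correct and follows exactly the route the paper indicates: the paper simply says the corollary arises by ``combining the previous theorem with the upper bound \ref{eq:fujita intro},'' and you have filled in precisely those details---the monotonicity check for $v\mapsto v\log(C/v)$ (which the paper records as inequality \ref{eq:upper bound on vol in pf}), the substitution of $v_{\max}=(n+1)^n/n!$, and the weakening via $n!\le (n!)^n$ to reach the stated form. Your positivity argument likewise reproduces the paper's own computation in Section \ref{sec:Sharp-height-inequalities} (the line $n!\pi^n m_n\sigma_n^{-1}=\pi(\pi^2/2e)^{n-1}(n+1)>1$); note that this positivity is already part of Theorem \ref{thm:ke intro}, so for the corollary you may simply cite it rather than rederive it.
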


Incidentally, the upper bound above is related to a question posed
in \cite{na}, asking whether $(\overline{-\mathcal{K}_{\mathcal{X}}})^{n+1}$
is bounded from above by a universal constant $C_{n},$ under the
assumption that $X$ be non-singular and $\overline{-\mathcal{K}_{\mathcal{X}}}$
be relatively ample. This is a stronger condition than having positive
curvature, as we assume. We also allow singularities, but our results
concern only the toric case. Under the conditions in Theorem \ref{thm:main toric intro}
our upper bound may be improved to the sharp bound $(\overline{-\mathcal{K}_{\P_{\Z}^{n}}})^{n+1}$
(given by formula \ref{eq:expl formul on p n}). As for the lower
bound it is sharp in any dimension $n$. Indeed, there are $n-$dimensional
K-semistable ($\Q$-factorial) Fano varieties $X$ such that $\text{vol}(X)$
and thus (by Theorem \ref{thm:ke intro}) $(\overline{-\mathcal{K}_{\mathcal{X}}})^{n+1}$
is arbitrarily close to $0;$ see Example \ref{exa:toric family}. 

\subsection{\label{subsec:Donaldson's-toric-invariant}Donaldson's toric invariant}

Let now $(X,L)$ be a polarized complex projective manifold. A prominent
role in Kähler geometry is played by Mabuchi's K-energy functional
$\mathcal{M}$ \cite{mab}, defined on the space $\mathcal{H}(X,L)$
of all smooth metrics $\left\Vert \cdot\right\Vert $ on $L$ with
positive curvature. Its critical points are the metrics whose curvature
form $\omega$ define a Kähler metric on $X$ with constant scalar
curvature. The precise definition of $\mathcal{M}$ is recalled in
Section \ref{subsec:The-Mabuchi-functional}. Since the definition
of $\mathcal{M}$ only involves its differential, the functional $\mathcal{M}$
is only defined up to addition by a real constant. However, when $(X,L)$
is toric Donaldson \cite{do1} exploited the toric structure to define
the Mabuchi functional $\mathcal{M}$ as a canonical functional on
toric metrics: 
\begin{equation}
\mathcal{M}_{L}:=\int_{\partial P}ud\sigma-a\int_{P}udx-\int_{P}\log\det(\nabla^{2}u)dx,\,\,\,\,\,\,a:=\int_{\partial P}d\sigma/\int_{P}dx\label{eq:Ds toric Mab intro}
\end{equation}
 where $P$ is the moment polytope in $\R^{n}$ corresponding to the
polarized toric manifold $(X,L),$ whose boundary $\partial P$ comes
with a measure $d\sigma$ induced by Lebesgue measure $dx$ on $\R^{n}$
and the lattice $\Z^{n}$ in $\R^{n}$ and $u$ is the smooth bounded
convex function on $P$ corresponding to a toric metric on $L$ under
Legendre transformation (see Section \ref{subsec:Logarithmic-coordinates-and}).
In particular, in the last section of \cite{do1} Donaldson introduced
an invariant of a polarized toric manifold $(X,L),$ defined as the
infimum of the toric Mabuchi functional $\mathcal{M}_{L}$ defined
by formula \ref{eq:Ds toric Mab intro}. Here we show that Theorem
\ref{thm:main toric intro} implies that when $X$ is a Fano variety
and $L=-K_{X}$ a slight perturbation of Donaldson's invariant is
minimal when $X$ is complex projective space, under the conditions
on $X$ appearing in Theorem \ref{thm:main toric intro}:
\begin{thm}
\label{thm:Don inv intro}Let $X$ be a K-semistable toric Fano variety
of dimension $n,$ satisfying the conditions in Theorem \ref{thm:main toric intro}.
Then the invariant 
\[
\text{\ensuremath{X\mapsto\inf_{\mathcal{H}(X,-K_{X})}\mathcal{M}_{-K_{X}}-\frac{(-K_{X})^{n}}{n!}\log\left(\frac{(-K_{X})^{n}}{n!}\right)} }
\]
is minimal for $X=\P^{n}$ (and only then), where the inf is attained
at the metric on $-K_{\P^{n}}$ induced by the Fubini-Study metric.
\end{thm}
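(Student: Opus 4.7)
The plan is to translate Theorem \ref{thm:main toric intro} into the desired Mabuchi inequality via an Arakelov-geometric identity on the canonical toric model. The key step is to establish an identity of the form
\begin{equation}\label{eq:height-Mab}
\mathcal{M}_{-K_{X}}(\|\cdot\|) + \frac{2(\overline{-\mathcal{K}_{\mathcal{X}}})^{n+1}}{n+1} + (-K_{X})^{n}\log\mu(X) = \mathrm{vol}(X)\log\mathrm{vol}(X) + E_{n},
\end{equation}
valid for every continuous toric metric $\|\cdot\|$ on $-K_X$ with positive curvature, where $\mu(X)$ is the mass of the associated measure, $\mathrm{vol}(X):=(-K_X)^n/n!$, and $E_n$ is a constant depending only on the dimension. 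I would derive this by evaluating the Gillet--Soulé arithmetic self-intersection directly on the canonical toric model, matching the archimedean contribution (transferred to the moment polytope via Legendre duality and expanded by integration by parts) with the three terms of Donaldson's formula \eqref{eq:Ds toric Mab intro}, and combining the non-archimedean contributions with the combinatorial data of the canonical toric integral model (cf.\ \cite[Def.\ 3.5.6]{b-g-p-s}) to produce the right-hand side. The left-hand side of \eqref{eq:height-Mab} is manifestly scale-invariant in $\|\cdot\|$: Donaldson's functional has this property by construction, and the last two height terms form precisely the scale-invariant combination displayed in \eqref{eq:conj intro with volume}.

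Once \eqref{eq:height-Mab} is in hand, the conclusion follows quickly. Since $X$ is K-semistable toric, it is K-polystable and admits a toric Kähler--Einstein metric on $-K_X$ that uniquely (up to the torus action) minimizes $\mathcal{M}_{-K_X}$; equivalently, after volume-normalization it uniquely maximizes $(\overline{-\mathcal{K}_{\mathcal{X}}})^{n+1}$ over volume-normalized positively-curved continuous metrics. Minimizing both sides of \eqref{eq:height-Mab} in $\|\cdot\|$ (and restricting to $\mu(X)=1$ to kill the $\log\mu$ term) yields
\begin{equation}\label{eq:inv-height}
\inf_{\mathcal{H}(X,-K_X)} \mathcal{M}_{-K_X} - \mathrm{vol}(X)\log\mathrm{vol}(X) = -\frac{2}{n+1}\sup_{\mu(X)=1}(\overline{-\mathcal{K}_{\mathcal{X}}})^{n+1} + E_n.
\end{equation}
By Theorem \ref{thm:main toric intro}, the supremum on the right-hand side is bounded above by $(\overline{-\mathcal{K}_{\P_\Z^n}})^{n+1}$, with equality if and only if $X=\P^n$ and the maximizing metric is the Fubini--Study metric. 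Substituting into \eqref{eq:inv-height}, this shows that the invariant $\inf \mathcal{M}_{-K_X} - \mathrm{vol}(X)\log\mathrm{vol}(X)$ is minimized precisely at $X=\P^n$, with the infimum realized by the Fubini--Study metric, as asserted.

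The main obstacle is establishing \eqref{eq:height-Mab} with the precise right-hand side. The archimedean part is essentially toric Kähler geometry and reduces, after Legendre duality and integration by parts on the moment polytope, to matching Donaldson's three-term expression; this is a careful but largely formal computation. The more subtle part is the non-archimedean side: the combinatorial constants associated with the canonical toric integral model must conspire to yield precisely $\mathrm{vol}(X)\log\mathrm{vol}(X) + E_n$ with the correct $X$-dependence. A useful sanity check is to specialize \eqref{eq:height-Mab} to $X=\P^n$ with the Fubini--Study metric, where the height is given explicitly by \eqref{eq:expl formul on p n} and Donaldson's functional on the standard simplex can be computed in closed form, pinning down $E_n$. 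A secondary technical point is the extension from smooth toric metrics to the general continuous positively-curved toric metrics considered here, which should follow by standard monotone approximation together with continuity of both sides of \eqref{eq:height-Mab}.
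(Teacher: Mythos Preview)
Your overall strategy---deducing the Mabuchi inequality from the height inequality of Theorem~\ref{thm:main toric intro}---is the paper's strategy too, and your final step \eqref{eq:inv-height} is in fact correct (it is the content of formula~\eqref{eq:sup inf intro} combined with Prop.~\ref{prop:arithm Mab as Don Mab}). However, the pointwise identity \eqref{eq:height-Mab} that you propose as the bridge to \eqref{eq:inv-height} is \emph{false}: the Mabuchi functional and the arithmetic self-intersection are not related by an equality valid for every metric. The correct relation, obtained in the paper by combining Prop.~\ref{prop:arithm Mab as Don Mab} with the decomposition \eqref{eq:decomp of arithm Mab in Fano case}, reads
\[
\mathcal{M}_{-K_X}(\psi) + \frac{2}{(n+1)!}(\overline{-\mathcal{K}_{\mathcal{X}}})^{n+1} + \mathrm{vol}(X)\log\mu(X)
= \mathrm{vol}(X)\log\mathrm{vol}(X) + \mathrm{vol}(X)\cdot\mathrm{Ent}_{\,e^{-\psi}/\mu(X)}\bigl(MA(\psi)\bigr),
\]
where the last term is a relative entropy between probability measures, which is nonnegative and vanishes precisely at the K\"ahler--Einstein metric. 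So your identity should be an inequality, and no reorganization of archimedean versus non-archimedean contributions will eliminate this entropy term (for the canonical toric model the non-archimedean side is in any case trivial). Your normalizations are also off by factors of $n!$ in the height term and in the $\log\mu$ term, but that is secondary.

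The paper gets from Theorem~\ref{thm:main toric intro} to \eqref{eq:inv-height} via the Ding functional rather than a direct identity. In brief: the height is identified (formula~\eqref{eq:Ding Z in toric case}) with minus the arithmetic Ding functional $\hat{\mathcal{D}}_{\psi_P}$; the Mabuchi functional $\mathcal{M}_{\psi_P}=\mathrm{vol}(X)\,F_{\psi_P}(MA(\cdot))$ satisfies the pointwise inequality $F_{\psi_P}\geq\hat{\mathcal{D}}_{\psi_P}$ (formula~\eqref{eq:F geq D}) together with the equality of infima \eqref{eq:inf is inf given reference}; and Lemma~\ref{lem:M psi P is Donaldson plus log} identifies $\mathcal{M}_{\psi_P}$ with $\mathcal{M}_{-K_X}-\mathrm{vol}(X)\log\mathrm{vol}(X)$. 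Chaining these gives exactly your \eqref{eq:inv-height}, after which your application of Theorem~\ref{thm:main toric intro} is correct. In short: your endpoint is right, but the route is an inequality-plus-equality-of-infima argument (equivalently, both sides share the K\"ahler--Einstein metric as extremizer and the entropy vanishes there), not a metric-by-metric identity.
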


In the previous theorem the Fano variety $X$ is allowed to be singular.
The Mabuchi functional for singular general Fano varieties was introduced
in \cite{d-t,bbegz} and Donaldson's formula \ref{eq:Ds toric Mab intro}
was extended to singular toric Fano varieties in \cite{ber-ber}.
In general, for Fano varieties the Mabuchi functional $\mathcal{M}$
is bounded from below iff $X$ is K-semistable \cite{li} (see the
discussion following Theorem \ref{thm:arithm Vol and K semi st}). 

\subsection{\label{subsec:The-arithmetic-K-energy}The arithmetic Mabuchi functional
and Odaka's modular height}

For a general polarized manifold $(X,L)$ the infimum of the Mabuchi
functional $\mathcal{M}$ is not canonically defined (since $\mathcal{M}$
is only defined up to addition by a constant). But to any given integral
model $(\mathcal{X},\mathcal{L})$ of a polarized complex variety
$(X,L)$ one may, as shown by Odaka \cite{o}, attach a particular
Mabuchi functional $\mathcal{M}_{(\mathcal{X},\mathcal{L})}$ which
(up to a multiplicative normalization) is given as the following sum
of arithmetic intersection numbers:
\begin{equation}
\mathcal{M}_{(\mathcal{X},\mathcal{L})}(\overline{\mathcal{L}}):=\frac{a}{(n+1)!}\overline{\mathcal{L}}^{n+1}-\frac{1}{n!}(-\overline{\mathcal{K}}_{\mathcal{X}})\cdot\overline{\mathcal{L}}^{n},\,\,\,\,a=-n(K_{X}\cdot L^{n-1})/L^{n}\label{eq:def of arithm Mab intro}
\end{equation}
 where, as in the previous section, $\overline{\mathcal{L}}$ denotes
the metrized line bundle $(\mathcal{L},\left\Vert \cdot\right\Vert ).$
In the definition of the second arithmetic intersection number above
one also needs to endow $-K_{X}$ with a metric and one is confronted
with two different natural choices: either the metric induced by the
volume form $\omega^{n}/n!$ of the Kähler metric $\omega$ defined
by the curvature form of $(\mathcal{L},\left\Vert \cdot\right\Vert )$
or the\emph{ normalized} volume form $\omega^{n}/L^{n}$ (which has
unit total volume). The first choice is the one adopted in \cite{o}
and we show that when $X$ is a toric Fano variety and $(\mathcal{X},\mathcal{L})$
is the canonical integral model of ($X,L)$ this choice coincides
with Donaldson's one (formula \ref{eq:Ds toric Mab intro}). However,
for our purposes the second volume-normalized choice turns out to
be the appropriate one. It yields, in particular, the shift by the
logarithm of $(-K_{X})^{n}$ appearing in Theorem \ref{thm:Don inv intro}:
\[
2\mathcal{M}_{(\mathcal{X},\mathcal{-}\mathcal{K}_{X})}=\mathcal{M}_{-K_{X}}-\frac{(-K_{X})^{n}}{n!}\log\left(\frac{(-K_{X})^{n}}{n!}\right)
\]
(Prop \ref{prop:arithm Mab as Don Mab}). The point is that with this
choice the following formula holds in the arithmetic setting:
\begin{equation}
\sup\frac{(\overline{-\mathcal{K}_{\mathcal{X}}})^{n+1}}{(n+1)!}=-\inf_{\mathcal{H}(X,-K_{X})}\mathcal{M}_{(\mathcal{X},-\mathcal{K}_{\mathcal{X}})}\label{eq:sup inf intro}
\end{equation}
 where the sup ranges over all volume-normalized metrics in $\mathcal{H}(X,-K_{X})$
(see Prop \ref{prop:inf arithm Mab vs Ding}). As a consequence, Conjecture
\ref{conj:height intro} is equivalent to the inequality
\begin{equation}
\inf_{\mathcal{H}(X,-K_{X})}\mathcal{M}_{(\mathcal{X},-K_{\mathcal{X}})}\geq\inf_{\mathcal{H}(\P^{n},-K_{\P^{n}})}\mathcal{M}_{(\P_{\Z}^{n},...)}.\label{eq:lower bound arith Mab intro}
\end{equation}
Theorem \ref{thm:Don inv intro} thus follows from Theorem \ref{thm:main toric intro}. 

\subsubsection{\label{subsec:Odaka's-modular-height}Odaka's modular height}

Let $(X_{F},L_{F})$ be an $n-$dimensional polarized variety defined
over a number field $F.$ In \cite{o} Odaka introduced the following
invariant of $(X_{F},L_{F}),$ dubbed the \emph{intrinsic K-modular
height of $(X_{F},L_{F}):$} 

\begin{equation}
h(X_{F},L_{F})=\inf_{(\mathcal{X},\mathcal{L})}\inf_{\mathcal{H}(X,L)}\mathcal{M}_{(\mathcal{X},\mathcal{L})},\label{eq:def of Odakas mod inv intr}
\end{equation}
 where $(\mathcal{X},\mathcal{L})$ is a model of $(X_{F},L_{F})$
over the rings of integers $\mathcal{O}_{F'}$ of a finite field extension
$F'$ of $F$ and $\mathcal{M}_{(\mathcal{X},\mathcal{L})}$ now denotes
the arithmetic K-energy \ref{eq:def of arithm Mab intro}, divided
by the degree $[F':\Q].$ In contrast to \cite{o}, we will employ
the volume-normalized metric on $-K_{X}$ in the definition of $\mathcal{M}_{(\mathcal{X},\mathcal{L})},$
discussed in the previous section. As shown in \ref{eq:def of arithm Mab intro},
for a polarized abelian variety $(X_{F},L_{F})$, Odaka's modular
height $h(X_{F},L_{F})$ essentially coincides with Faltings' stable
modular height of $(X_{K},L_{K})$ \cite{fa00} (see Section \ref{subsec:Comparison-with-Odaka's Falting}).
Furthermore, as explained in \cite{o}, $h(X_{F},L_{F})$ can be viewed
as a ``large rank limit'' of Bost's and Zhang's intrinsic heights
appearing in \cite{bo1,bo2,zh1}, where the role of K-semistability
is played by Chow semistability (see formula \ref{eq:Od asym}). We
propose the following
\begin{conjecture}
\label{conj:min of Odaka for Fano}Let $X_{\Q}$ be a Fano variety
defined over $\Q.$ Then Odaka's modular invariant $h(X_{\Q},-K_{X_{\Q}}),$
normalized as above, is minimal when $X_{\Q}=\P_{\Q}^{n}.$
\end{conjecture}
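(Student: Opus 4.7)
The strategy is to convert Conjecture~\ref{conj:min of Odaka for Fano} into a height maximization problem via the duality \ref{eq:sup inf intro}, and then derive it from a number-field strengthening of Conjecture~\ref{conj:height intro}.

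First, I would rewrite Odaka's invariant as a double supremum of heights. Applying \ref{eq:sup inf intro} over the ring of integers $\mathcal{O}_{F'}$, and recalling that in definition~\ref{eq:def of Odakas mod inv intr} one divides $\mathcal{M}_{(\mathcal{X},\mathcal{L})}$ by $[F':\Q]$, one obtains
\[
\inf_{\mathcal{H}(X,-K_{X})}\mathcal{M}_{(\mathcal{X},-\mathcal{K}_{\mathcal{X}})} \;=\; -\sup_{\|\cdot\|} \frac{(\overline{-\mathcal{K}_{\mathcal{X}}})^{n+1}}{[F':\Q](n+1)!},
\]
the supremum running over volume-normalized continuous metrics on $-K_{X}$ with positive curvature current. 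Taking the further infimum over integral models then gives
\[
-h(X_{\Q},-K_{X_{\Q}}) \;=\; \sup_{F',\,(\mathcal{X},-\mathcal{K}_{\mathcal{X}})}\;\sup_{\|\cdot\|} \frac{(\overline{-\mathcal{K}_{\mathcal{X}}})^{n+1}}{[F':\Q](n+1)!}.
\]
Thus Conjecture~\ref{conj:min of Odaka for Fano} is equivalent to the assertion that this double supremum is maximal at $X_{\Q}=\P_{\Q}^{n}$.

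Second, I would reduce to K-semistable $X_{\Q}$ and invoke the number-field extension of Conjecture~\ref{conj:height intro} indicated in Section~\ref{subsec:Outlook-on-a}. By Theorem~\ref{thm:arithm Vol and K semi st} the inner supremum is $+\infty$ whenever $X_{\Q}$ fails to be K-semistable, so $h=-\infty$ and the inequality is either vacuous or one implicitly restricts the outer infimum in \ref{eq:def of Odakas mod inv intr} to K-semistable $X_{\Q}$, matching the fact that boundedness below of the Mabuchi functional is equivalent to K-semistability \cite{li}. For K-semistable $X_{\Q}$, the predicted generalization reads
\[
\frac{(\overline{-\mathcal{K}_{\mathcal{X}}})^{n+1}}{[F':\Q]} \;\leq\; (\overline{-\mathcal{K}_{\P_{\Z}^{n}}})^{n+1}
\]
for every integral model $(\mathcal{X},-\mathcal{K}_{\mathcal{X}})$ over $\mathcal{O}_{F'}$ and every volume-normalized positive-curvature metric, the right-hand side being the explicit Fubini--Study value \ref{eq:expl formul on p n} realized by the canonical model $\P_{\Z}^{n}$ endowed with the Fubini--Study metric. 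Since that realization appears inside the supremum defining $-h(\P_{\Q}^{n},-K_{\P_{\Q}^{n}})$, feeding the displayed bound into the first step immediately yields $h(X_{\Q})\geq h(\P_{\Q}^{n})$, with equality forcing $X_{\Q}=\P_{\Q}^{n}$ by the equality clause of Conjecture~\ref{conj:height intro}.

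The main obstacle is precisely the input to the previous step: establishing the height conjecture over arbitrary number fields and for \emph{all} integral models, not merely the toric canonical model over $\Z$ handled by Theorem~\ref{thm:main toric intro}. Two separate difficulties arise. First, one needs the expected statement that $\widehat{\mathrm{vol}}_{\chi}(\overline{-\mathcal{K}_{\mathcal{X}}})$ is maximized on the canonical integral model, so that ranging $\mathcal{X}$ over non-canonical models cannot increase the height; this is the expectation Odaka formulates just after Theorem~\ref{thm:main toric intro}, and it is presently open. Second, one must extend Conjecture~\ref{conj:height intro} beyond the toric regime, where the universal bound \ref{eq:universal bound intro} is unavailable and some genuine arithmetic analog of Fujita's algebro-geometric inequality \ref{eq:fujita intro} appears to be required. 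Pending both, Conjecture~\ref{conj:min of Odaka for Fano} stands, within the framework of this paper, as a conditional consequence of the generalized height conjecture.
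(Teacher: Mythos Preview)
The statement is a \emph{conjecture} and the paper does not prove it; rather, the paper shows (in the paragraph preceding Proposition~\ref{prop:Mab greater than Ding}) that it is implied by the more general Conjecture~\ref{conj:arithm Ding} on the arithmetic Ding functional. Your proposal is likewise a conditional reduction rather than a proof, and you correctly acknowledge this at the end.

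However, your reduction contains a genuine gap. The identity~\eqref{eq:sup inf intro} you invoke in the first step is established only for models $(\mathcal{X},\mathcal{L})$ with $\mathcal{L}=-\mathcal{K}_{\mathcal{X}}$ (see Proposition~\ref{prop:inf arithm Mab vs Ding} and the decomposition~\eqref{eq:decomp of arithm Mab in Fano case}, which relies on $\mathcal{L}+\mathcal{K}_{\mathcal{X}}$ being trivial). But Odaka's modular height~\eqref{eq:def of Odakas mod inv intr} is an infimum over \emph{all} polarized models $(\mathcal{X},\mathcal{L})$ of $(X_{\Q},-K_{X_{\Q}})$: the line bundle $\mathcal{L}$ is only required to restrict to $-K_{X_{\Q}}$ on the generic fiber, and need not coincide with the relative anticanonical $-\mathcal{K}_{\mathcal{X}}$ of the chosen model. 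Your displayed formula for $-h(X_{\Q},-K_{X_{\Q}})$ therefore silently restricts the outer supremum to the subclass $\mathcal{L}=-\mathcal{K}_{\mathcal{X}}$, and the quantity you obtain may a priori be strictly smaller than $-h$; the subsequent upper bound on it does not bound $-h$. Your discussion of obstacles mentions ranging $\mathcal{X}$ over non-canonical models but not ranging $\mathcal{L}$ away from $-\mathcal{K}_{\mathcal{X}}$, so this issue is not addressed.

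The paper's route avoids this problem by introducing, in Section~\ref{subsec:Outlook-on-a}, the arithmetic Ding functional $\mathcal{D}_{(\mathcal{X},\mathcal{L})}$ for \emph{arbitrary} models $(\mathcal{X},\mathcal{L})$, and proving in Proposition~\ref{prop:Mab greater than Ding} the inequality $\mathcal{M}_{(\mathcal{X},\mathcal{L})}\geq\mathcal{D}_{(\mathcal{X},\mathcal{L})}$, which is precisely the extension of~\eqref{eq:sup inf intro} needed to handle the full infimum in~\eqref{eq:def of Odakas mod inv intr}. Conjecture~\ref{conj:arithm Ding}, bounding $\mathcal{D}_{(\mathcal{X},\mathcal{L})}$ uniformly over all models and metrics, then serves as the correct strengthening of Conjecture~\ref{conj:height intro} from which Conjecture~\ref{conj:min of Odaka for Fano} follows.
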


According to a conjecture of Odaka \cite{od2} any globally K-semistable
integral model $(\mathcal{X},-\mathcal{K}_{\mathcal{X}})$ of $(X,-K_{X})$
minimizes $\mathcal{M}_{(\mathcal{X},\mathcal{L})}$ over all models
$(\mathcal{X},\mathcal{L})$ (the function field analog of this minimization
property is established in \cite{b-x}; see also \cite[Remark 7.9]{x}).
Global K-semistability means that all the fibers of $\mathcal{X}\rightarrow\text{Spec \ensuremath{\mathcal{O}_{F}}}$
are K-semistable. In other words, in addition to the K-semistability
of the generic fiber $X_{F}$ this means that the variety $X_{\mathbb{F}_{p}}$
over the finite field $\mathbb{F}_{p},$ corresponding to the integral
model $\mathcal{X},$ is K-semistable for any prime ideal $p.$ For
example, as pointed out to us by Odaka the canonical model $\mathcal{X}$
of a K-semistable toric Fano variety $X_{\Q}$ appearing in Theorem
\ref{thm:main toric intro} is globally K-semistable. Thus if Odaka's
minimization conjecture holds, then Theorem \ref{thm:main toric intro}
implies Conjecture \ref{conj:min of Odaka for Fano} for any toric
Fano variety $X_{\Q}$ satisfying the conditions in Theorem \ref{thm:main toric intro}.\footnote{During the revision of the first preprint version of the present paper
Odaka's minimization conjecture was settled in \cite{h-o} under slightly
stronger assumptions than global K-semistability. } Anyhow, the positivity statement in Theorem \ref{thm:ke intro} implies
that the modular invariant $h(X_{\Q},-K_{X_{\Q}})$ is negative for
any K-semistable toric Fano variety $X_{\Q}.$ 

\subsection{Organization}

In Section \ref{sec:Heights,-arithmetic-volumes} we start by recalling
the complex-geometric and arithmetic setup before proving Theorem
\ref{thm:arithm Vol and K semi st}, relating upper bounds on the
height of Fano varieties to K-semistability. The proof leverages an
arithmetic analog of the Ding functional. In Section \ref{sec:Sharp-height-inequalities}
we specialize to the toric situation and prove the sharp height inequality
in Theorem \ref{thm:main toric intro}, stated in the introduction
and the height bounds for Kähler-Einstein metrics in Theorem \ref{thm:ke intro}.
We also show that Conjecture \ref{conj:height intro} is compatible
with taking products. We then go on, in Section \ref{sec:Donaldson's-toric-invariant},
to deduce Theorem \ref{thm:Don inv intro} concerning the sharp lower
bound on Donaldson's toric Mabuchi functional. In Section \ref{sec:Connections-to-the ar}
Donaldson's functional is related to Odaka's arithmetic Mabuchi functional,
which, in turn is related to the arithmetic Ding functional. In the
last section we make a comparison with the function field case, formulate
a generalized version of Conjecture \ref{conj:height intro} and compare
with previous work of Bost and Zhang, Odaka and Faltings.

We have made an effort to make the paper readable for the reader with
a background in arithmetic geometry, as well as for the complex geometers,
by including most of the background material needed for the proofs
of the main results.

\subsection{Acknowledgements}

We are grateful to Bo Berndtsson, Dennis Eriksson, Gerard Freixas
i Montplet, Benjamin Nill, Yuji Odaka, Per Salberger, Chenyang Xu
and Ziquan Zhuang for illuminating discussions/comments and, in particular,
to Alexander Kasprzyk for updating the database \cite{ob}. We are
also grateful to the refere for very helpful comments. This work was
supported by grants from the Knut and Alice Wallenberg foundation,
the Göran Gustafsson foundation and the Swedish Research Council.

\section{\label{sec:Heights,-arithmetic-volumes}Heights, arithmetic volumes
and K-stability of Fano varieties}

In this section we show, in particular, that the height of a polarized
integral model $(\mathcal{X},\mathcal{L})$ of a Fano manifold $(X,-K_{X})$
is bounded from above - as the metric on $\mathcal{L}$ ranges over
all volume-normalized metrics with positive curvature current - if
and only if $(X,-K_{X})$ is K-semistable (Theorem \ref{thm:arithm Vol and K semi st}).
See also \cite{o} for further connections between K-stability of
polarized varieties $(X,L)$ and arithmetic geometry. The main new
feature here, compared to \cite{o}, is that we leverage an arithmetic
version of the Ding functional in Kähler geometry, while \cite{o}
considers an arithmetic version of the Mabuchi functional (the two
functionals are compared in Section \ref{sec:Connections-to-the ar}). 

\subsection{Complex geometric setup}

Throughout the paper $X$ will denote a compact connected complex
normal variety, assumed to be $\Q-$Gorenstein. This means that the
canonical divisor $K_{X}$ on $X$ is defined as a $\Q-$line bundle:
there exists some positive integer $m$ and a line bundle on $X$
whose restriction to the regular locus $X_{\text{reg }}$ of $X$
coincides with the $m$:th tensor power of $K_{X_{\text{reg }}},$
i.e. the top exterior power of the cotangent bundle of $X_{\text{reg }}.$
We will use additive notation for tensor powers of line bundles.

\subsubsection{\label{subsec:Metrics-on-line}Metrics on line bundles}

Let $(X,L)$ be a polarized complex projective variety i.e. a complex
normal variety $X$ endowed with an ample line bundle $L.$ We will
use additive notation for metrics on $L.$ This means that we identify
a continuous Hermitian metric $\left\Vert \cdot\right\Vert $ on $L$
with a collection of continuous local functions $\phi_{U}$ associated
to a given covering of $X$ by open subsets $U$ and trivializing
holomorphic sections $e_{U}$ of $L\rightarrow U:$ 
\begin{equation}
\phi_{U}:=-\log(\left\Vert e_{U}\right\Vert ^{2}),\label{eq:def of phi U}
\end{equation}
 which defines a function on $U.$ Of course, the functions $\phi_{U}$
on $U$ do not glue to define a global function on $X,$ but the current
\[
dd^{c}\phi_{U}:=\frac{i}{2\pi}\partial\bar{\partial}\phi_{U}
\]
 is globally well-defined and coincides with the normalized curvature
current of $\left\Vert \cdot\right\Vert $ (the normalization ensures
that the corresponding cohomology class represents the first Chern
class $c_{1}(L)$ of $L$ in the integral lattice of $H^{2}(X,\R)).$
Accordingly, as is customary, we will symbolically denote by $\phi$
a given continuous Hermitian metric on $L$ and by $dd^{c}\phi$ its
curvature current. The space of all continuous metrics $\phi$ on
$L$ will be denoted by $\mathcal{C}^{0}(L).$ We will denote by $\mathcal{C}^{0}(L)\cap\text{PSH\ensuremath{(L)}}$
the space of all continuous metrics on $L$ whose curvature current
is positive, $dd^{c}\phi\geq0$ (which means that $\phi_{U}$ is plurisubharmonic,
or psh, for short). Then the exterior powers of $dd^{c}\phi$ are
defined using the local pluripotential theory of Bedford-Taylor \cite{b-b}.
The \emph{volume} of an ample line bundle $L$ may be defined by

\begin{equation}
\text{vol}(L):=\lim_{k\rightarrow\infty}k^{-n}\dim H^{0}(X,L^{\otimes k})=\frac{1}{n!}L^{n}=\frac{1}{n!}\int_{X}(dd^{c}\phi)^{n}\label{eq:HS formully algebraic}
\end{equation}
using in the second equality the Hilbert-Samuel theorem and where
$\phi$ denotes any element in $\mathcal{C}^{0}(L)\cap\text{PSH\ensuremath{(L)}}.$

More generally, metrics $\phi$ are defined for a $\Q-$line bundle
$L:$ if $mL$ is a bona fide line bundle, for $m\in\Z_{+},$ then
$m\phi$ is a bona fide metric on $mL.$ 
\begin{rem}
The normalization of $\phi_{U}$ used here coincides with the one
in \cite{ber0,ber-ber}, but it is twice the one employed in \cite{b-b}.
\end{rem}

\subsubsection{\label{subsec:Metrics-on- minus KX vs volume}Metrics on $-K_{X}$
vs volume forms on $X$}

First consider the case when $X$ is smooth. Then any smooth metric
$\left\Vert \cdot\right\Vert $ on $-K_{X}$ corresponds to a volume
form on $X,$ defined as follows. Given local holomorphic coordinates
$z$ on $U\subset X$ denote by $e_{U}$ the corresponding trivialization
of $-K_{X},$ i.e. $e_{U}=\partial/\partial z_{1}\wedge\cdots\wedge\partial/\partial z_{n}.$
The metric on $-K_{X}$ induces, as in the previous section, a function
$\phi_{U}$ on $U$ and the volume form in question is locally defined
by
\begin{equation}
e^{-\phi_{U}}(\frac{i}{2})^{n^{2}}dz\wedge d\bar{z},\,\,\,\,\,dz:=dz_{1}\wedge\cdots\wedge dz_{n},\label{eq:volume form e minus phi U}
\end{equation}
on $U,$ which glues to define a global volume form on $X.$ In other
words, $e^{-\phi_{U}}$ is the density of the volume form with respect
to the local Euclidean volume form. Accordingly, we will simply denote
the volume form in question by $e^{-\phi}$, abusing notation slightly.
When $X$ is singular any continuous metric $\phi$ on $-K_{X}$ induces
a measure on $X,$ symbolically denoted by $e^{-\phi},$ defined as
before on the regular locus $X_{\text{reg }}$ of $X$ and then extended
by zero to all of $X.$ We will say that a measure $dV$ on $X$ is
a continuous volume form if it corresponds to a continuous metric
on $-K_{X}.$ A Fano variety has log terminal singularities iff it
admits a continuous volume form $dV$ with finite total volume \cite[Section 3.1]{bbegz}.

\subsubsection{K-semistability}

We briefly recall the notion of K-semistability (see \cite{do1,r-t,w,od1}
for more background). A polarized complex projective variety $(X,L)$
is said to be \emph{K-semistable} if the Donaldson-Futaki invariant
$\text{DF}(\mathscr{X},\mathscr{L})$ of any test configuration $(\mathscr{X},\mathscr{L})$
for $(X,L)$ is non-negative. A test configuration $(\mathscr{X},\mathscr{L})$
is defined as a $\C^{*}-$equivariant normal model for $(X,L)$ over
the complex affine line $\C.$ More precisely, $\mathscr{X}$ is a
normal complex variety endowed with a $\C^{*}-$action $\rho$, a
$\C^{*}-$equivariant holomorphic projection $\pi$ to $\C$ and a
relatively ample $\C^{*}-$equivariant $\Q-$line bundle $\mathscr{L}$
(endowed with a lift of $\rho):$ 
\begin{equation}
\pi:\mathcal{\mathscr{X}}\rightarrow\C,\,\,\,\,\,\mathscr{L}\rightarrow\mathscr{X},\,\,\,\,\,\,\rho:\,\,\mathscr{X}\times\C^{*}\rightarrow\mathscr{X}\label{eq:def of pi for test c}
\end{equation}
such that the fiber of $\mathscr{X}$ over $1\in\C$ is equal to $(X,L).$
Its \emph{Donaldson-Futaki invariant} $\text{DF}(\mathscr{X},\mathscr{L})\in\R$
may be defined as a normalized limit, as $k\rightarrow\infty,$ of
Chow weights of a sequence of one-parameter subgroups of $GL\left(H^{0}(X,kL)\right)$
induced by $(\mathscr{X},\mathscr{L})$ (in the sense of Geometric
Invariant Theory). As a consequence, $(X,L)$ is K-semistable if,
for example, $(X,kL)$ is Chow semi-stable, for $k$ sufficiently
large \cite{r-t}. However, for the purpose of the present paper it
will be more convenient to employ the intersection-theoretic formula
for $\text{DF}(\mathscr{X},\mathscr{L})$ established in \cite{w,od1}: 

\[
\text{DF}(\mathscr{X},\mathscr{L})=\frac{a}{(n+1)!}\overline{\mathscr{L}}^{n+1}+\frac{1}{n!}\mathscr{K}_{\mathcal{\mathscr{\overline{X}}}/\P^{1}}\cdot\mathcal{\overline{\mathscr{L}}}^{n},\,\,\,\,a=-n(K_{X}\cdot L^{n-1})/L^{n}
\]
 where $\overline{\mathscr{L}}$ denotes the $\C^{*}-$equivariant
extension of $\mathscr{L}$ to the $\C^{*}-$equivariant compactification
$\mathscr{\overline{X}}$ of $\mathscr{X}$ over $\P^{1}$ and $\mathscr{K}_{\mathcal{\mathscr{\overline{X}}}/\P^{1}}$
denotes the relative canonical divisor. 
\begin{rem}
Usually the definition of $\text{DF}(\mathscr{X},\mathscr{L})$ involves
a factor of $1/L^{n},$ but the present definition will be more convenient
here (since the factor $L^{n}$ is positive it does not alter the
definition of K-stability). It is made so that $\text{DF}(\mathscr{X},\mathscr{L})=\overline{\mathscr{L}}^{n+1}$
when $\mathscr{L}=-\mathscr{K}_{\mathcal{\mathscr{\overline{X}}}/\P^{1}}.$
\end{rem}

\subsection{Arithmetic setup}

Let$\mathcal{X}$ be a projective flat scheme $\mathcal{X}\rightarrow\text{Spec \ensuremath{\Z}}$
of relative dimension $n,$ with the property that $\mathcal{X}$
is reduced and satisfies Serre's conditions $S_{2}$ (this is, for
example, the case if $\mathcal{X}$ is normal). Denote by $X$ the
complex points of $\mathcal{X}$ and assume that $X$ is a normal
projective variety over $\C.$ Such a scheme $\mathcal{X}$ will be
called an \emph{arithmetic variety. }A \emph{polarized arithmetic
variety} $(\mathcal{X},\mathcal{L}$) is an arithmetic variety endowed
with a relatively ample $\Q-$line bundle $\mathcal{L}.$ We will
denote by $L$ the ample line bundle over $X$ induced by $\mathcal{L};$
the polarized arithmetic variety $(\mathcal{X},\mathcal{L})$ will
be valled a \emph{model} for $(X,L)$\emph{ over $\Z$} (or an\emph{
integral model} for $(X,L$))). We will use the following simple
\begin{lem}
\label{lem:Stein}Under the assumptions above on $\mathcal{X}$ the
canonical embedding of $\Z$ in $H^{0}(\mathcal{X},\mathcal{O}_{\mathcal{X}})$
is an isomorphism. In other words, $1$ generates the $\Z-$module
$H^{0}(\mathcal{X},\mathcal{O}_{\mathcal{X}}).$
\end{lem}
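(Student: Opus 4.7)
The strategy is to show that $A := H^0(\mathcal{X},\mathcal{O}_\mathcal{X})$ is a subring of $\Q$ which is finitely generated as a $\Z$-module, and then to use integrality to conclude $A = \Z$. The hypotheses that $\mathcal{X}$ is projective and flat over $\Z$, together with the connected normality of $X$, are what make each of these three inputs available.

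First, since $\pi\colon\mathcal{X}\to\mathrm{Spec}\,\Z$ is projective, hence proper, the pushforward $\pi_*\mathcal{O}_\mathcal{X}$ is a coherent sheaf on $\mathrm{Spec}\,\Z$, so $A$ is a finitely generated $\Z$-module. Next, flatness of $\mathcal{X}$ over $\Z$ makes $\mathcal{O}_\mathcal{X}$ a flat $\Z$-module, from which one reads off that multiplication by any nonzero integer is injective on $A$; hence $A$ is torsion-free, and being finitely generated, it is free of some rank $r\geq 1$.

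To compute $r$ I would invoke flat base change for cohomology along the (flat) map $\mathrm{Spec}\,\C\to\mathrm{Spec}\,\Z$, which applies because $\pi$ is quasi-compact and separated; this yields
\[
A\otimes_\Z\C \;\cong\; H^0(X,\mathcal{O}_X).
\]
Since $X$ is assumed to be a compact connected complex normal variety (stated at the start of Section~2.1), $H^0(X,\mathcal{O}_X) = \C$. Therefore $r=1$, and $A$ is a $\Z$-free rank one subring of its field of fractions $A\otimes_\Z\Q = \Q$.

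Finally, for any $\alpha\in A\subseteq\Q$, the subring $\Z[\alpha]\subseteq A$ is a finitely generated $\Z$-submodule of $A$, so $\alpha$ is integral over $\Z$. Since $\Z$ is integrally closed in $\Q$, this forces $\alpha\in\Z$, and hence $A=\Z$, as claimed. There is no real obstacle here; the only point that deserves care is verifying that flat base change is applicable in the arithmetic setting (guaranteed by projectivity of $\pi$) and that the connectedness of $X$ built into the paper's standing assumptions is what makes the rank come out to exactly one.
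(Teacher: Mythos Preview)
Your proof is correct and follows essentially the same line as the paper's: both use properness to get finite generation of $A=H^0(\mathcal{X},\mathcal{O}_\mathcal{X})$ over $\Z$, flatness to get torsion-freeness, a base change to identify $A\otimes\,(\text{field})$ with the global functions on the fiber (the paper base-changes to $\Q$ and uses that $X_\Q$ is geometrically connected and reduced, you base-change to $\C$ and use that $X$ is connected normal), and finally integral closedness of $\Z$ in $\Q$ to conclude. The choice of $\Q$ versus $\C$ is cosmetic.
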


\begin{proof}
We have injections $\Z\hookrightarrow H^{0}(\mathcal{X},\mathcal{O}_{\mathcal{X}})\hookrightarrow H^{0}(X_{\Q},\mathcal{O}_{X_{\Q}})\simeq\Q$
(using flatness in the second injection and, in the isomorphism, that
$X_{\Q}$ is geometrically connected and geometrically reduced). But,
since $H^{0}(\mathcal{X},\mathcal{O}_{\mathcal{X}})$ is a finitely
generated $\Z-$module and $\Z$ is an integrally closed domain this
implies that $\Z\hookrightarrow H^{0}(\mathcal{X},\mathcal{O}_{\mathcal{X}})$
is an isomorphism.
\end{proof}
For any positive integer $k$ we may identify the free $\Z-$module
$H^{0}(\mathcal{X},k\mathcal{L})$ with a lattice in $H^{0}(X,kL):$
\[
H^{0}(\mathcal{X},k\mathcal{L})\otimes\C=H^{0}(X,kL).
\]
By definition a\emph{ metrized line bundle} $\overline{\mathcal{L}}$
is a line bundle $\mathcal{L}\rightarrow\mathcal{X}$ such that the
corresponding line bundle $L\rightarrow X$ is endowed with a metric
$\left\Vert \cdot\right\Vert .$ We will use the additive notation
$\phi$ for metrics $\left\Vert \cdot\right\Vert $ on $L$ discussed
in the previous section: 
\[
\overline{\mathcal{L}}:=\left(\mathcal{L},\phi\right).
\]

\subsubsection{\label{subsec:Arithmetic-Fano-varieties}Arithmetic Fano varieties}

We will say that the relative canonical line bundle of an arithmetic
variety $\mathcal{X}$ is defined as a $\Q-$line bundle, denoted
by $\mathcal{K},$ if there exists a positive integer $m$ such that
the $m$th reflexive power $\omega_{X/\text{Spec}\ensuremath{\Z}}^{[m]}$
of the dualizing sheaf $\omega_{X/\text{Spec}\ensuremath{\Z}}$ of
$\mathcal{X}$ is locally free. Then the line bundle $m\mathcal{K}$
over $\mathcal{X}$ may be identified with $\omega_{X/\text{Spec}\ensuremath{\Z}}^{[m]}$
(see \cite[Section 1.1]{ko} for a more general setup of canonical
line bundles attached to schemes over regular excellent rings). An
arithmetic variety $\mathcal{X}\rightarrow\text{Spec \ensuremath{\Z}}$
will be called an \emph{arithmetic Fano variety} if
\begin{itemize}
\item the canonical line bundle $\mathcal{K}$ of $\mathcal{X}$ is well-defined
as a $\Q-$line bundle and its dual $-\mathcal{K}$ is relatively
ample
\item the complexification $X$ of $\mathcal{X}$ is normal and thus defines
a complex Fano variety (i.e. $-K_{X}$ is ample)
\end{itemize}
\begin{example}
If $\mathcal{X}$ is, locally, a complete intersection, then $\mathcal{K}$
is defined as a line bundle (i.e. $m=1)$ \cite[Section 1.1]{ko}.
In particular, if $\mathcal{X}$ is the subscheme of $\P_{\Z}^{n+1}$
cut out by an irreducible homogeneous polynomial of degree $d$ with
integer coefficents, then $\mathcal{K}$ is well-defined as a line
bundle and $\mathcal{X}$ is an arithmetic Fano variety iff $d\leq n+1.$
\end{example}

\subsubsection{\label{subsec:The-arithmetic-volume}The $\chi-$arithmetic volume,
heights and arithmetic intersection numbers}

In the arithmetic setup there are different analogs of the volume
$\text{vol\ensuremath{(L)} }$of an ample line bundle $L.$ Here we
shall focus on the one defined by the following asymptotic arithmetic
Euler characteristic originating in \cite{fa0} (called the \emph{$\chi-$arithmetic
volume} \cite{b-g-p-s,b-n-p-s} and the \emph{sectional capacity }in
\cite{r-l-v}):

\begin{equation}
\widehat{\text{vol}}_{\chi}\left(\overline{\mathcal{L}}\right):=\lim_{k\rightarrow\infty}k^{-(n+1)}\log\text{Vol}\text{\ensuremath{\left\{  s_{k}\in H^{0}(\mathcal{X},k\mathcal{L})\otimes\R:\,\,\,\sup_{X}\left\Vert s_{k}\right\Vert _{\phi}\leq1\right\} } , }\label{eq:def of xhi vol}
\end{equation}
where the volume is computed with respect to the Lebesgue measure,
normalized such that a fundamental domain of the lattice $H^{0}(\mathcal{X},k\mathcal{L})$
has unit volume. Here $H^{0}(\mathcal{X},k\mathcal{L})\otimes\R$
may be identified with the subspace of real sections in $H^{0}(X,kL).$
If the metric on $L$ has positive curvature current, then, by the
arithmetic Hilbert-Samuel theorem \cite{g-s2,Zh0},
\begin{equation}
\widehat{\text{vol}}_{\chi}\left(\overline{\mathcal{L}}\right)=\frac{\overline{\mathcal{L}}^{n+1}}{(n+1)!},\label{eq:vol chi as intersection}
\end{equation}
 where $\overline{\mathcal{L}}^{n+1}$ denotes the top arithmetic
intersection number in the sense of Gillet-Soulé \cite{g-s}, which,
defines the \emph{height} of $\mathcal{X}$ with respect to $\overline{\mathcal{L}}$
\cite{fa,b-g-s}. For the purpose of the present paper formula \ref{eq:def of xhi vol}
may be taken as the definition of $\overline{\mathcal{L}}^{n+1}$
(arithmetic intersections between general $n+1$ metrized line bundles
could then be defined by polarization). More generally, $\widehat{\text{vol}}_{\chi}\left(\overline{\mathcal{L}}\right)$
is naturally defined for $\Q-$line bundles, since it is homogeneous
with respect to tensor products of $\overline{\mathcal{L}}:$ 
\begin{equation}
\widehat{\text{vol}}_{\chi}\left(m\overline{\mathcal{L}}\right)=m^{n+1}\widehat{\text{vol}}_{\chi}\left(\overline{\mathcal{L}}\right),\,\,\,\text{if \ensuremath{m\in\Z_{+}}}\label{eq:vol chi tensor}
\end{equation}
Moreover, $\widehat{\text{vol}}_{\chi}\left(\overline{\mathcal{L}}\right)$
is additively equivariant with respect to scalings of the metric:
\begin{equation}
\widehat{\text{vol}}_{\chi}\left(\mathcal{L},\phi+\lambda\right)=\widehat{\text{vol}}_{\chi}\left(\overline{\mathcal{L}}\right)+\frac{\lambda}{2}\text{vol\ensuremath{(L)}},\,\,\,\text{if }\lambda\in\R,\label{eq:scaling of chi vol}
\end{equation}
 as follows directly from the definition.

\subsection{Upper bounds on the $\chi-$arithmetic volume vs K-semistability
of Fano varieties}

We are now ready to prove the following theorem, relating upper bounds
on the $\chi-$arithmetic volume of a metrized integral model of $(X,-K_{X})$
to K-semistability: 
\begin{thm}
\label{thm:arithm Vol and K semi st}Let $(\mathcal{X},\mathcal{L})$
be a polarized arithmetic variety such that $X$ is a Fano variety
and $L=-K_{X}.$ Then the following is equivalent:
\begin{enumerate}
\item $(X,-K_{X})$ is K-semistable
\item The supremum of $\widehat{\text{vol}}_{\chi}\left(\mathcal{L},\phi\right)$
over all continuous volume-normalized metrics $\phi$ on $-K_{X}$
is finite.
\item The supremum of $\widehat{\text{vol}}_{\chi}\left(\mathcal{L},\phi\right)$
over all continuous volume-normalized metrics $\phi$ on $-K_{X},$
which are invariant under complex conjugation, is finite.
\end{enumerate}
Moreover, $(X,-K_{X})$ is K-polystable iff the supremum in item $2$
above is attained at some locally bounded metric $\psi$ in $PSH\ensuremath{(-K_{X})}.$
In general, a locally bounded metric $\psi$ in $PSH\ensuremath{(-K_{X})}$
attains the supremum in item $2$ above iff it is a Kähler-Einstein
metric. 
\end{thm}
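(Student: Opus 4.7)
The strategy is to reformulate the supremum of $\widehat{\mathrm{vol}}_{\chi}$ as the infimum of an \emph{arithmetic Ding functional}, which I expect to differ only by an affine transformation from the classical Ding functional of K\"ahler geometry, and then to invoke the known K-semistability criterion for the latter. Set
\[
\mathcal{D}_{\mathrm{arith}}(\phi):=-\widehat{\mathrm{vol}}_{\chi}(\mathcal{L},\phi)-\frac{\mathrm{vol}(L)}{2}\log\int_{X}e^{-\phi}
\]
on $\mathcal{C}^{0}(-K_{X})\cap\mathrm{PSH}(-K_{X})$. The scaling law \ref{eq:scaling of chi vol} together with $\log\int_{X}e^{-(\phi+\lambda)}=\log\int_{X}e^{-\phi}-\lambda$ shows that $\mathcal{D}_{\mathrm{arith}}(\phi+\lambda)=\mathcal{D}_{\mathrm{arith}}(\phi)$. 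On a volume-normalized metric we have $\mathcal{D}_{\mathrm{arith}}(\phi)=-\widehat{\mathrm{vol}}_{\chi}(\mathcal{L},\phi)$, so by translation invariance
\[
\sup_{\phi\text{ v.n.}}\widehat{\mathrm{vol}}_{\chi}(\mathcal{L},\phi)=-\inf_{\phi\in\mathcal{C}^{0}\cap\mathrm{PSH}}\mathcal{D}_{\mathrm{arith}}(\phi),
\]
where on the right side the infimum is over all continuous psh metrics, without any normalization.

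Next I would relate $\mathcal{D}_{\mathrm{arith}}$ to the classical Ding functional $\mathcal{D}(\phi)=-\mathcal{E}(\phi)-\log\int_{X}e^{-\phi}$, with $\mathcal{E}$ the Monge-Amp\`ere energy normalized so that $\mathcal{E}(\phi+\lambda)=\mathcal{E}(\phi)+\lambda$. The standard variation formula for arithmetic intersection numbers, combined with the arithmetic Hilbert--Samuel theorem \ref{eq:vol chi as intersection}, should yield
\[
\widehat{\mathrm{vol}}_{\chi}(\mathcal{L},\phi)-\widehat{\mathrm{vol}}_{\chi}(\mathcal{L},\phi_{0})=\frac{\mathrm{vol}(L)}{2}\bigl(\mathcal{E}(\phi)-\mathcal{E}(\phi_{0})\bigr),
\]
so that, modulo an additive constant $C=C(\mathcal{X},\mathcal{L},\phi_{0})$, one has $\mathcal{D}_{\mathrm{arith}}=\tfrac{\mathrm{vol}(L)}{2}\,\mathcal{D}+C$. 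The equivalence (1)$\Leftrightarrow$(2), together with the K-polystability clause and the identification of the extremizer with a K\"ahler--Einstein metric, then follows from the complex-geometric facts: $\mathcal{D}$ is bounded from below on $\mathcal{C}^{0}(-K_{X})\cap\mathrm{PSH}(-K_{X})$ iff $(X,-K_{X})$ is K-semistable \cite{ber0,li}; the infimum is attained at some locally bounded $\psi\in\mathrm{PSH}(-K_{X})$ iff $X$ is K-polystable; and any such $\psi$ is a K\"ahler--Einstein metric \cite{bbegz}.

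Implication (2)$\Rightarrow$(3) is trivial. For (3)$\Rightarrow$(1), if $X$ fails to be K-semistable I would produce a destabilizing test configuration defined over $\R$ (for instance, by averaging a given destabilizing one with its complex conjugate) and consider the associated conjugation-equivariant geodesic ray of psh metrics, along which $\mathcal{D}_{\mathrm{arith}}\to-\infty$; this contradicts (3) and closes the cycle of equivalences. The main technical obstacle I anticipate is not the formal manipulations but justifying the variation identity above, and the definitions of $\mathcal{E}$ and $\log\int e^{-\phi}$, for merely \emph{continuous} psh metrics on a possibly singular Fano variety; this is handled by the pluripotential-theoretic machinery of \cite{bbegz}, which simultaneously furnishes the extension of the Ding-based K-semistability criterion \cite{ber0} from the smooth to the singular Fano setting required here.
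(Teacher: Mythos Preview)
Your overall strategy---define an arithmetic Ding functional, use scale invariance to trade the volume-normalized supremum for an unnormalized infimum, identify this with the classical Ding functional up to an additive constant via the change-of-metrics formula, and then invoke the known equivalence between K-semistability and lower boundedness of Ding---is exactly the route the paper takes. Two points deserve attention, however.

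First, the implication $(3)\Rightarrow(1)$ does not go through as you sketch it. ``Averaging a destabilizing test configuration with its complex conjugate'' is not a meaningful operation: a test configuration is a scheme with extra structure, not a function, and there is no natural averaging that produces a new test configuration with controlled Donaldson--Futaki invariant. What is actually needed is the nontrivial fact that for a Fano variety defined over $\R$, K-semistability can be tested using only test configurations defined over $\R$; the paper invokes Zhuang's equivariant K-stability result \cite{zhu} for this. Once one has a real test configuration, the paper then verifies (via the envelope description of the associated ray) that the geodesic ray emanating from a conjugation-invariant metric stays conjugation-invariant, so that the asymptotic slope argument of \cite{ber0} yields a contradiction inside the conjugation-invariant class.

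Second, your attribution of the singular case of ``K-semistable $\Rightarrow$ Ding bounded below'' to \cite{bbegz} is off. The reference \cite{bbegz} supplies the pluripotential framework and the fact that K\"ahler--Einstein metrics minimize Ding, but not the implication in question. The paper's argument for $(1)\Rightarrow(2)$ in the singular case runs through \cite{l-w-x} (any K-semistable Fano admits a special test configuration with K-polystable central fiber) combined with the recent resolution of the singular Yau--Tian--Donaldson conjecture \cite{li1,l-x-z} (that central fiber carries a K\"ahler--Einstein metric), after which a subharmonicity argument for the Ding metric on the Deligne pairing over the disc transfers the lower bound back to $X$. A minor further point: the theorem's supremum is over \emph{all} continuous volume-normalized metrics, not only psh ones; the paper reduces to the psh case via the envelope $P\phi$ and the identity $\widehat{\mathrm{vol}}_{\chi}(\mathcal{L},\phi)=(\mathcal{L},P\phi)^{n+1}/(n+1)!$, which you should include.
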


Recall that on any complex projective variety $X$ which is defined
over $\R$ there is a globally defined complex conjugation map (whose
orbits on $X$ correspond to the maximal ideals of the scheme $X_{\R}$$)$
and in Arakelov geometry it is often assumed that the metrics are
invariant under complex conjugation \cite{sou00}. 

Before embarking on the proof we recall the definition of the (normalized)\emph{
Ding functional} on $\mathcal{C}^{0}(-K_{X})\Cap\text{PSH\ensuremath{(-K_{X})}},$
introduced in \cite{di}, which depends on the choice of a reference
metric $\psi_{0}$ in $\mathcal{C}^{0}(-K_{X})\Cap\text{PSH\ensuremath{(-K_{X})}: }$
\begin{equation}
\mathcal{\hat{D}}_{\psi_{0}}(\psi):=-\frac{1}{\mathrm{vol}(-K_{X})}\mathcal{E}_{\psi_{0}}(\psi)-\log\int_{X}e^{-\psi},\label{eq:def of Ding f}
\end{equation}
 where the functional $\mathcal{E}_{\psi_{0}}$ is a primitive of
$(dd^{c}\psi)^{n}/n!$ (see formula \ref{eq:differential of E beauti}).
More generally, as shown in \cite{bbegz} using the monotinicity of
$\mathcal{E}_{\psi_{0}}$, $\mathcal{\hat{D}}_{\psi_{0}}(\psi)$ can
be extended to the space $\mathcal{E}^{1}(-K_{X})$ of all metrics
in $\text{PSH}\ensuremath{(-K_{X})}$ with finite energy and a finite
energy metric $\psi$ minimizes $\mathcal{\hat{D}}_{\psi_{0}}(\psi)$
iff $\psi$ is a Kähler-Einstein metric, i.e. $dd^{c}\psi$ defines
a Kähler metric on the regular locus of $X$ with constant positive
Ricci curvature. When $\psi$ is volume-normalized this equivalently
means that
\[
\frac{(dd^{c}\psi)^{n}}{\text{vol}\ensuremath{(-K_{X})}n!}=e^{-\psi}
\]
on the regular locus of $X.$ The identity \ref{eq:vol chi as intersection}
was extended to finite energy metrics in \cite{ber-f}. But for our
purposes it will be enough to work with continuous metrics.
\begin{rem}
\label{rem:KE cont}In general, any Kähler-Einstein metric $\psi$
in $\mathcal{E}^{1}(-K_{X})$ is locally bounded \cite{bbegz}. In
the toric case this implies that $\psi$ is, in fact, continuous \cite[Prop 4.1]{c-g-s}.
\end{rem}

By introducing an arithmetic version of the Ding functional we show
that item $2$ in the previous theorem is equivalent to  the normalized
Ding functional $\mathcal{\hat{D}}_{\psi_{0}}$ being bounded from
below on $\mathcal{C}^{0}(-K_{X})\Cap\text{PSH\ensuremath{(-K_{X})} }$(which
is equivalent to lower boundedness of the Mabuchi functional; see
\ref{eq:inf is inf given reference}). By \cite{li} this is equivalent
to K-semistability when $X$ is non-singular. In the proof of Theorem
\ref{thm:arithm Vol and K semi st} we explain how to extend this
result to general Fano varieties, leveraging the very recent solution
of the Yau-Tian-Donaldson conjecture for singular Fano varieties \cite{li1,l-x-z}.
The equivalence with item $3$ leverages the recent result \cite{zhu}.

\subsubsection{Proof of Theorem \ref{thm:arithm Vol and K semi st}}

We start with two lemmas. First, to a given continuous metric $\phi$
on $L$ we associate, following \cite{b-b}, a continuous psh metric
$\psi$ on $L$ defined as the following point-wise envelope:

\begin{equation}
P\phi:=\sup\left\{ \psi:\,\,\,\psi\,\text{psh},\,\,\psi\leq\phi\right\} .\label{eq:def of Pphi}
\end{equation}

\begin{rem}
More generally, when $L$ is big the envelope above has to be replaced
by its upper semi-continuous regularization in order to obtain a psh
metric. However, when $L$ is an ample line bundle over a normal variety
$X,$ as we assume here, the envelope $P\phi$ is already continuous
(see \cite[Lemma 7.9]{b-e}).
\end{rem}

\begin{lem}
\label{lem:arith vol in terms of pphi}Assume that $\mathcal{L}$
is relatively ample and let $\phi$ be a continuous metric on $L$.
Then the arithmetic $\chi-$volume may be expressed as the following
top arithmetic intersection number:
\[
\widehat{\text{vol}}_{\chi}\left(\mathcal{L},\phi\right)=\frac{\left(\mathcal{L},P\phi\right)^{n+1}}{(n+1)!}.
\]
 
\end{lem}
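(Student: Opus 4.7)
The plan is to reduce the identity to the arithmetic Hilbert-Samuel theorem (formula~\ref{eq:vol chi as intersection}) by showing that passing from $\phi$ to its psh envelope $P\phi$ does not alter the $\chi$-arithmetic volume. Since $P\phi$ is psh by construction and continuous under our hypotheses (as recalled in the remark preceding the lemma), the Hilbert-Samuel identity will then apply directly at $(\mathcal{L},P\phi)$ and yield the top arithmetic intersection number $(\mathcal{L},P\phi)^{n+1}/(n+1)!$.

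The core step will be to establish the following norm-preservation lemma: for every $k\geq 1$ and every section $s_k \in H^0(X,kL)$,
\[
\sup_X \|s_k\|_\phi = \sup_X \|s_k\|_{P\phi}.
\]
One direction is immediate from $P\phi \leq \phi$, which forces $\|s_k\|_\phi \leq \|s_k\|_{P\phi}$ pointwise. For the reverse direction, write $s_k = f\cdot e_U^{\otimes k}$ in a local trivialization; then the local functions $\tfrac{1}{k}\log|f|^2$ transform under change of frame exactly like the $\phi_U$ in \ref{eq:def of phi U}, so they glue into a (possibly singular) psh metric $\psi_s$ on $L$. The inequality $\sup_X \|s_k\|_\phi \leq 1$ is equivalent to $\psi_s \leq \phi$ locally, and the very definition of $P\phi$ as the upper envelope of psh metrics dominated by $\phi$ then forces $\psi_s \leq P\phi$, i.e.\ $\sup_X \|s_k\|_{P\phi}\leq 1$. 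Applying the same argument to positive real rescalings of $s_k$ gives equality of the suprema.

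With this lemma in hand, for each $k$ the two norm balls appearing in the definition \ref{eq:def of xhi vol} of the $\chi$-arithmetic volume coincide as subsets of $H^0(\mathcal{X},k\mathcal{L})\otimes\R$, hence have identical Lebesgue volumes with respect to the lattice $H^0(\mathcal{X},k\mathcal{L})$. Passing to the limit yields $\widehat{\text{vol}}_\chi(\mathcal{L},\phi) = \widehat{\text{vol}}_\chi(\mathcal{L},P\phi)$, and an application of \ref{eq:vol chi as intersection} to the continuous psh metric $P\phi$ concludes the proof.

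The main point of care—really the only place continuity is needed—is invoking the arithmetic Hilbert-Samuel theorem at $P\phi$, which is not smooth but only continuous psh. This step relies on the Bedford-Taylor extension of \ref{eq:vol chi as intersection} to locally bounded psh metrics recorded in \cite{ber-f}; everything else is a formal consequence of the extremal characterization of the envelope $P\phi$.
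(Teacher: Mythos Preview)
Your proof is correct and follows essentially the same two-step strategy as the paper: first establish the norm-preservation identity $\sup_X\|s_k\|_\phi=\sup_X\|s_k\|_{P\phi}$ (the paper cites \cite[Prop~1.8]{b-b} for this, while you spell out the standard argument via the singular psh metric $\tfrac{1}{k}\log|s_k|^2$), and then apply the arithmetic Hilbert--Samuel theorem to the continuous psh metric $P\phi$. The only cosmetic difference is in the reference for the second step: the paper invokes \cite[Thm~1.4]{Zh0} directly (continuous psh suffices, since $P\phi$ is continuous under the present hypotheses), whereas you cite the finite-energy extension in \cite{ber-f}, which is more than is needed here.
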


\begin{proof}
When $\phi$ is psh the lemma follows directly from \cite[Thm 1.4]{Zh0}
(the latter proof reduces to the original arithmetic Hilbert-Samuel
theorem in \cite{g-s2}, where $X$ is assumed non-singular, using
a perturbation argument on a resolution of $X).$ In fact, the result
\cite[Thm 1.4]{Zh0} applies more generally when $\mathcal{L}$ is
merely assumed to be relatively nef over the closed points of $\text{Spec \ensuremath{\Z} }$.
Next, the general case follows from the case when $\phi$ is psh (applied
to $P\phi)$ by the following simple observation: 
\[
\sup_{X}\left\Vert s\right\Vert _{\phi}=\sup_{X}\left\Vert s\right\Vert _{P\phi},\,\,\,\,\text{if }s\in H^{0}(X,kL),
\]
 as follows directly from the definition \ref{eq:def of Pphi} of
$P\phi$ (see \cite[Prop 1.8]{b-b}).
\end{proof}
In order to state the next lemma consider the following functional
on $\mathcal{C}^{0}(L)\cap\text{PSH \ensuremath{(L),}}$ defined with
respect to a given reference $\psi_{0}\in\mathcal{C}^{0}(L)\cap\text{PSH\ensuremath{(L):}}$
\begin{equation}
\mathcal{E}_{\psi_{0}}(\psi):=\frac{1}{(n+1)!}\int_{X}(\psi-\psi_{0})\sum_{j=0}^{n}(dd^{c}\psi)^{j}\wedge(dd^{c}\psi_{0})^{n-j}\label{eq:def of E beauti}
\end{equation}
 Alternatively, the functional $\mathcal{E}_{\psi_{0}}$ may be characterized
as the primitive of the one-form on $\mathcal{C}^{0}(L)\cap\text{PSH\ensuremath{(L)}}$
defined by the measure $(dd^{c}\psi)^{n}/n!$:

\begin{equation}
d\mathcal{E}_{\psi_{0}}(\psi)=\frac{1}{n!}(dd^{c}\psi)^{n},\,\,\,\,\mathcal{E}_{\psi_{0}}(\psi_{0})=0.\label{eq:differential of E beauti}
\end{equation}
 It follows directly from the definition of $\mathcal{E}_{\psi_{0}}(\psi)$
and the classical Hilbert-Samuel formula \ref{eq:HS formully algebraic}
that 
\begin{equation}
\mathcal{E}_{\psi_{0}}(\psi+c)=\mathcal{E}_{\psi_{0}}(\psi)+c\text{vol\ensuremath{(L),\,\,\,\forall c\in\R} }.\label{eq:scaling of beauti E}
\end{equation}
The following lemma is an arithmetic refinement of the previous formula:
\begin{lem}
\label{lem:change of metric}(change of metrics formula). For any
two continuous metrics on $L,$ which are invariant under complex
conjugation,
\begin{equation}
\widehat{\mathrm{vol}}_{\chi}\left(\mathcal{L},\phi_{1}\right)-\widehat{\mathrm{vol}}_{\chi}\left(\mathcal{L},\phi_{2}\right)=\frac{1}{2}\left(\mathcal{E}_{\psi_{0}}(P\phi_{1})-\mathcal{E}_{\psi_{0}}(P\phi_{2})\right)\label{eq:change of metric formula for chi vol}
\end{equation}
\end{lem}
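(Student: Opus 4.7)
The plan is to reduce the identity to the classical change-of-metric formula for arithmetic intersection numbers via Lemma \ref{lem:arith vol in terms of pphi}. That lemma gives
$$\widehat{\mathrm{vol}}_\chi(\mathcal{L},\phi_i) = \frac{(\mathcal{L}, P\phi_i)^{n+1}}{(n+1)!}, \qquad i=1,2,$$
and the envelopes $\psi_i := P\phi_i$ are continuous psh metrics (by the remark preceding the lemma), invariant under complex conjugation since $\phi_i$ are. Hence it suffices to prove
$$\frac{(\mathcal{L}, \psi_1)^{n+1} - (\mathcal{L}, \psi_2)^{n+1}}{(n+1)!} = \frac{1}{2}\bigl(\mathcal{E}_{\psi_0}(\psi_1) - \mathcal{E}_{\psi_0}(\psi_2)\bigr).$$

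The right-hand side can be rewritten using the cocycle identity
$$\mathcal{E}_{\psi_0}(\psi_1) - \mathcal{E}_{\psi_0}(\psi_2) = \frac{1}{(n+1)!}\int_X (\psi_1-\psi_2)\sum_{j=0}^n (dd^c\psi_1)^j\wedge(dd^c\psi_2)^{n-j},$$
which follows by integrating the primitive property \ref{eq:differential of E beauti} along the affine path $\psi_t = (1-t)\psi_2 + t\psi_1$, expanding $(dd^c\psi_t)^n$ binomially, and using the Beta integral $\int_0^1 t^j(1-t)^{n-j}dt = j!(n-j)!/(n+1)!$. The claim then reduces to the identity
$$(\mathcal{L},\psi_1)^{n+1} - (\mathcal{L},\psi_2)^{n+1} = \frac{1}{2}\int_X (\psi_1-\psi_2)\sum_{j=0}^n (dd^c\psi_1)^j\wedge(dd^c\psi_2)^{n-j},$$
which is the standard change-of-metric formula for the top arithmetic intersection number. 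The factor $1/2$ stems from the convention $\phi = -\log\|\cdot\|^2$ used in \ref{eq:def of phi U}, which produces one $1/2$ per archimedean place (and is compatible with the scaling formula \ref{eq:scaling of chi vol}).

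For smooth psh metrics, this identity is a direct consequence of Gillet--Soul\'e's intersection theory via the telescoping decomposition
$$(\mathcal{L},\psi_1)^{n+1} - (\mathcal{L},\psi_2)^{n+1} = \sum_{j=0}^n \left[(\mathcal{L},\psi_1)^{j+1}\cdot(\mathcal{L},\psi_2)^{n-j} - (\mathcal{L},\psi_1)^j \cdot (\mathcal{L},\psi_2)^{n-j+1}\right],$$
each summand being given by the archimedean contribution $\tfrac{1}{2}\int_X (\psi_1-\psi_2)(dd^c\psi_1)^j\wedge(dd^c\psi_2)^{n-j}$. The step I expect to require most care is the extension from smooth to merely continuous psh metrics, since only then do the Bedford--Taylor products $(dd^c\psi_1)^j\wedge(dd^c\psi_2)^{n-j}$ enter. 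This is handled by regularization: approximate $\psi_i$ from above by decreasing sequences of smooth psh metrics (working on a resolution of $X$ if $X$ is singular) and pass to the limit. Both sides are continuous under uniform convergence in $\mathcal{C}^0(L)\cap\mathrm{PSH}(L)$: the left-hand side by continuity of $\widehat{\mathrm{vol}}_\chi$ combined with Lemma \ref{lem:arith vol in terms of pphi}, and the right-hand side by the Bedford--Taylor continuity of Monge--Amp\`ere products along uniform limits of continuous psh functions.
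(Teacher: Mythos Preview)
Your argument is correct and follows essentially the same route as the paper: reduce to psh metrics via Lemma~\ref{lem:arith vol in terms of pphi}, then invoke the standard change-of-metric formula for arithmetic intersection numbers (which the paper simply cites as well-known, pointing to formula~\ref{eq:arithm inters form for trivial} and \cite[Prop 2.2]{o}, or alternatively to \cite[Thm A]{b-b}). Your write-up is considerably more explicit than the paper's two-line proof, spelling out the telescoping and the regularization step that the paper leaves to the references.
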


\begin{proof}
When $\phi_{i}$ are psh this is well-known and follows from basic
properties of arithmetic intersection numbers; see formula \ref{eq:arithm inters form for trivial}
or \cite[Prop 2.2]{o}). Alternatively, the result follows from the
previous lemma combined with \cite[Thm A]{b-b}. In order to check
that the multiplicative normalizations adopted here are compatible
note that the scaling relations \ref{eq:scaling of chi vol} and \ref{eq:scaling of beauti E}
are indeed compatible.
\end{proof}

\subsubsection{Conclusion of the proof of Theorem \ref{thm:arithm Vol and K semi st}}

Consider the following functional on the space $\mathcal{C}^{0}(-K_{X})$
of continuous metrics on $-K_{X}$
\begin{equation}
\mathcal{\hat{D}}_{\Z}(\phi):=-2\frac{\widehat{\text{vol}}_{\chi}\left(\mathcal{L},\phi\right)}{\text{vol}(-K_{X})}-\log\int_{X}e^{-\phi}.\label{eq:def of Ding Z}
\end{equation}
 Since this functional is invariant under scalings of the metric,
$\phi\mapsto\phi+c,$ the finiteness statement in the second point
of the proposition amounts to showing that the infimum of $\mathcal{\hat{D}}_{\Z}(\phi)$
over $\mathcal{C}^{0}(-K_{X})$ is finite. Now fix a continuous psh
metric $\psi_{0}$ on $-K_{X}$ and consider the following extension
of the normalized Ding functional \ref{eq:def of Ding f} to all of
$\mathcal{C}^{0}(-K_{X}):$

\begin{equation}
\mathcal{\hat{D}}_{\psi_{0}}(\phi):=-\frac{1}{\mathrm{vol}(-K_{X})}\mathcal{E}_{\psi_{0}}(P\phi)-\log\int_{X}e^{-\phi}.\label{eq:def of Ding on non psh}
\end{equation}
Combining the previous two lemmas reveals that 
\begin{equation}
\mathcal{\hat{D}}_{\Z}(\phi)=\mathcal{\hat{D}}_{\psi_{0}}(\phi)+C_{0},\,\,\,C_{0}:=-\frac{2\left(\mathcal{L},\psi_{0}\right)^{n+1}}{\text{vol}(-K_{X})(n+1)!}\label{eq:naive arithm Ding in terms of Ding}
\end{equation}
Next, observe that 
\begin{equation}
\inf_{\mathcal{C}^{0}(-K_{X})}\mathcal{\hat{D}}_{\psi_{0}}=\inf_{\mathcal{C}^{0}(-K_{X})\Cap\text{PSH\ensuremath{(-K_{X})}}}\mathcal{\hat{D}}_{\psi_{0}}\label{eq:inf Ding is inf Ding without psh}
\end{equation}
Indeed, this follows directly from the fact that the operator $\phi\mapsto P\phi$
from $\mathcal{C}^{0}(L)$ to $\mathcal{C}^{0}(L)\Cap\text{PSH\ensuremath{(L)}}$
is increasing and satisfies $P^{2}=P.$ 

\subsubsection*{$"3"\protect\implies"1"$ }

Let us first recall how Item $2$ implies Item $1$. First Item $2$
implies, thanks to the identities \ref{eq:naive arithm Ding in terms of Ding}
and \ref{eq:inf Ding is inf Ding without psh}, that the infimum of
$\mathcal{\hat{D}}_{\psi_{0}}$ over $\mathcal{C}^{0}(-K_{X})\Cap\text{PSH\ensuremath{(-K_{X})}}$
is finite. Thus it follows from results in \cite{ber0} that $(X,-K_{X})$
is K-semistable. Let us next show how to refine the proof in \cite{ber0}
to show the stronger statement $"3"\implies"1".$ More generally,
we will show that when $X$ is defined over the real field $\R$ $X$
is K-semistable if the infimum of $\mathcal{\hat{D}}_{\psi_{0}}$
over the space $\overline{\mathcal{C}^{0}(-K_{X})}\Cap\text{PSH\ensuremath{(-K_{X})}}$
is finite, where $\overline{\mathcal{C}^{0}(L)}$ denotes the subspace
of $\mathcal{C}^{0}(L)$ consisting of metrics which are invariant
under complex conjugation. To this end let us first summarize the
main steps in the proof in \cite{ber0}. First, a test configuration
$(\mathscr{X},\mathscr{L})$ for $(X,-K_{X})$ and a given metric
$\phi$ for $-K_{X}$ in $\mathcal{C}^{0}(-K_{X})\Cap\text{PSH}\ensuremath{(-K_{X})}$
determines a ray $\phi_{t}$ in $\text{PSH\ensuremath{(-K_{X})}}$
emanating from $\phi$ parametrized by $t\in[0,\infty[$ (i.e. $\phi_{0}=\phi).$
Using the notation in formula \ref{eq:def of pi for test c} the ray
$\phi_{t}$ is defined by 
\[
\phi_{-\log|\tau|}=\rho(\tau)^{*}(\Phi_{|\mathscr{X}_{\tau}}),\,\,\,\tau\in\C^{*}
\]
 where $\Phi$ is the $S^{1}-$invariant metric on the restriction
of $\mathcal{L}$ to the inverse image $\pi^{-1}(\mathbb{D})$ in
$\mathcal{X}$ of the unit-disc $\mathbb{D}\subset\C$ defined by
\begin{equation}
\Phi:=\sup\left\{ \Psi:\,\,\Psi_{|\pi^{-1}(\partial\mathbb{D})}=\phi,\,\,\,\,\Psi\in\mathcal{C}^{0}(\mathcal{L})\Cap\text{PSH}(\mathcal{L}_{|\pi^{-1}(\mathbb{D})})\right\} ,\label{eq:def of Phi as envelope}
\end{equation}
 where we have used the $\C^{*}-$action $\rho$ to identify $X$
with $X_{\tau}$ for any $\tau$ in the unit-circle $\partial\mathbb{D}.$
By \cite[Thm 1.3]{ber0}
\[
\text{vol \ensuremath{(-K_{X})^{-1}}}\text{DF (\ensuremath{\mathscr{X}},\ensuremath{\mathscr{L}})\ensuremath{\geq\lim_{t\rightarrow\infty}\left(t^{-1}\mathcal{\hat{D}}_{\phi_{0}}(\phi_{t})\right)}}.
\]
When $\mathcal{\hat{D}}_{\phi_{0}}(\phi_{t})$ is bounded from below
this means that $\text{DF}(\mathscr{X},\mathscr{L})\geq0,$ showing
that $X$ is K-semistable. Now assume that $X$ is defined over the
real field $\R.$ Then it follows from \cite[Thm 1.1]{zhu} that in
order to check K-semistability of $(X,-K_{X})$ it is enough to consider
test configurations $(\mathscr{X},\mathscr{L})$ defined over $\R.$
Thus, we just have to verify that for such test configurations, if
the given metric $\phi$ is taken to be in $\overline{\mathcal{C}^{0}(-K_{X})}\Cap\text{PSH\ensuremath{(-K_{X})}},$
then the ray $\phi_{t}$ remains in $\overline{\mathcal{C}^{0}(-K_{X})}\Cap\text{PSH\ensuremath{(-K_{X})}},$
for all $t>0.$ Since $(\mathscr{X},\mathscr{L})$ is defined over
$\R$ there is a complex conjugation map $F$ from $\mathscr{X}$
to $\mathscr{X}$ (that lifts to $\mathscr{L})$ and thus it is enough
to show that $F^{*}\phi=\phi$ implies that $F^{*}\Phi=\Phi.$ But
this follows from the definition \ref{eq:def of Phi as envelope}
of $\Phi$ only using that $F^{*}$ preserves the psh property of
a metric (as follows from a direct local calculation that reduces
to the fact that the Laplacian $i\partial_{z}\partial_{\bar{z}}$
in $\C$ is invariant under $z\mapsto\bar{z}).$ 

\subsubsection*{$"1"\protect\implies"2"$}

First recall that any K-semistable normal Fano variety (i.e. such
that $(X,-K_{X})$ is K-semistable) has log terminal singularities
\cite[Thm 1.3]{od0}. In the case that $X$ is non-singular it was
shown in \cite{li} that if $X$ is K-semistable, then the infimum
of the Ding functional $\mathcal{\hat{D}}_{\psi_{0}}$ over $\mathcal{C}^{0}(-K_{X})\Cap\text{PSH}\ensuremath{(-K_{X})}$
is finite. Thus, by formula \ref{eq:inf Ding is inf Ding without psh},
so is the infimum of $\mathcal{\hat{D}}_{\psi_{0}}$ over $\mathcal{C}^{0}(-K_{X}).$
The proof in \cite{li} relied, in particular, on the resolution of
the Yau-Tian-Donaldson conjecture in \cite{c-d-s} for Fano manifolds.
But thanks to the recent resolution of the Yau-Tian-Donaldson conjecture
for singular Fano varieties the proof in \cite{li} can be extended
to singular Fano varieties, mutatis mutandis. We briefly summarize
the argument, using Deligne pairings as in \cite{ber0} (rather than
the Bott-Chern classes used in \cite{li}). The starting point is
the result \cite[Thm 1.3]{l-w-x}, saying that if $X$ is K-semistable
then there exists a test configuration $(\mathscr{X},\mathscr{L})$
for $(X,-K_{X})$ whose central fiber $X_{0}$ is given by a K-polystable
Fano variety. More precisely, the test configuration is \emph{special}
in the sense that $\mathscr{L}$ is the relative anti-canonical line
bundle. Since the central fiber $X_{0}$ of $\mathscr{X}$ is K-polystable
it admits, by the solution of the Yau-Tian-Donaldson conjecture for
singular Fano varieties \cite{l-x-z} (building on \cite{li1}) a
Kähler-Einstein metric $\phi_{KE}.$ It thus follows from \cite[Thm 4.8]{bbegz}
that the Ding functional is bounded from below on $\mathcal{C}^{0}(-K_{X_{0}})\Cap\text{PSH}(-K_{X_{0}})$.
More precisely, its infimum is attained at the Kähler-Einstein metric
$\phi_{KE}:$ 
\begin{equation}
\inf_{\mathcal{C}^{0}(-K_{X_{0}})\Cap\text{PSH}(-K_{X_{0}})}\mathcal{\hat{D}}=\mathcal{\hat{D}}(\phi_{KE})>-\infty.\label{eq:Ding minimal on KE on central fiber}
\end{equation}
Now, given a metric $\phi$ in $\mathcal{C}^{0}(-K_{X})\Cap\text{PSH\ensuremath{(-K_{X})} let}$
$\Phi$ be the corresponding metric on $\mathscr{L}\rightarrow\pi^{-1}(\mathbb{D})$
defined by formula \ref{eq:def of Phi as envelope}. It induces a
metric on the $(n+1)-$fold Deligne pairing $\left\langle \mathscr{L},\mathscr{L},...,\mathscr{L}\right\rangle \rightarrow\mathbb{D}$
that we denote by $\left\langle \Phi\right\rangle $ (see \cite[Section 2.3]{ber0}).
Consider the corresponding twisted metric on $-\left\langle \mathscr{L},\mathscr{L},...,\mathscr{L}\right\rangle \rightarrow\mathbb{D}$
defined by 
\[
-\left\langle \Phi\right\rangle -\log\int_{X_{\tau}}e^{-\Phi_{|X_{\tau}}},
\]
dubbed the \emph{Ding metric }in \cite{ber0}. Fixing a trivialization
$S(\tau)$ of $\left\langle \mathscr{L},\mathscr{L},...,\mathscr{L}\right\rangle \rightarrow\mathbb{D}$
we may identify this metric with a function $\psi(\tau)$ on $\mathbb{D}:$
\[
\psi(\tau):=\log\left(\left\Vert S(\tau)\right\Vert _{\left\langle \Phi\right\rangle }^{2}\right)-\log\int_{X_{\tau}}e^{-\Phi_{|X_{\tau}}},
\]
For a fixed $\tau$ this metric coincides with the normalized Ding
functional $\mathcal{\hat{D}}(\phi_{\tau})$ up to an additive constant
depending on $\tau$ (by the ``change of metrics formula'' for Deligne
pairing; see \cite[Section 2.3]{ber0}). In particular, there exists
$a\in\R$ such that

\begin{equation}
\psi(1):=\mathcal{\hat{D}}_{\psi_{0}}(\phi)+a,\,\,\,\,\,\,\psi(0)\geq b:=\log\left(\left\Vert S(0)\right\Vert _{\left\langle \phi_{KE}\right\rangle }^{2}\right)-\log\int_{X_{0}}e^{-\phi_{KE}},\label{eq:psi 1 and 2}
\end{equation}
 using \ref{eq:Ding minimal on KE on central fiber} in the inequality.
As shown in \cite[Prop 3.5]{ber0} $\psi(\tau)$ is subharmonic on
$\mathbb{D}$ and the first term $\left\langle \Phi\right\rangle $
is continuous on $\mathbb{D}$ (as follows from \cite[Thm A]{Mo};
see the proof of \cite[Prop 3.6]{ber0}). Moreover, the second term
is also continuous on $\mathbb{D},$ as shown when $X$ is non-singular
in \cite[Lemma 1.9]{li} and in general in \cite[Lemma 7.1]{l-w-x0}.
As a consequence, 
\[
\psi(0)\leq\int_{\partial D}\psi d\theta=\psi(1),
\]
 using that $\psi(\tau)$ is $S^{1}-$invariant in the last equality.
Finally, invoking formula \ref{eq:psi 1 and 2} shows that $\mathcal{\hat{D}}_{\psi_{0}}(\phi)$
is uniformly bounded from below, as desired.

\subsection{\label{subsec:Compatibility-of-Conjecture}Compatibility of Conjecture
\ref{conj:height intro} with taking products}

The previous theorem shows, in particular, that the K-semistability
assumption in Conjecture \ref{conj:height intro} is necessary. We
next show that the conjecture is compatible with taking products:
\begin{prop}
\label{prop:product}Let $m\geq2$ and $\mathcal{X}_{1},...,\mathcal{X}_{m}$
be arithmetic Fano varieties which are K-semistable over $\C.$ Assume
that the inequality in Conjecture \ref{conj:height intro} holds for
all $\mathcal{X}_{i}$ (for any volume-normalized metrics on $-K_{X_{i}}$
with positive curvature current). Then the inequality holds for $\mathcal{X}_{1}\times\cdots\times\mathcal{X}_{m}$
with \emph{strict }inequality (for any volume-metric normalized metric
on $-K_{X_{1}\times\cdots\times X_{m}}$ with positive curvature current).
More precisely, 
\[
\widehat{\text{vol}}_{\chi}(\overline{-\mathcal{K}_{\mathcal{X}_{1}\times\cdots\times\mathcal{X}_{m}}})<\widehat{\text{vol}}_{\chi}(\overline{-\mathcal{K}_{\P_{\Z}^{n}}}).
\]
\end{prop}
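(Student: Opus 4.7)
\emph{Proof outline.} The plan is to induct on $m$, reducing to the case $m=2$; set $\mathcal{X}=\mathcal{X}_{1}\times\mathcal{X}_{2}$, $n_{i}=\dim_{\C}X_{i}$, $n=n_{1}+n_{2}$, and $-\mathcal{K}_{\mathcal{X}}=p_{1}^{*}(-\mathcal{K}_{\mathcal{X}_{1}})+p_{2}^{*}(-\mathcal{K}_{\mathcal{X}_{2}})$ for the two projections $p_{i}$. I would combine a product formula for the arithmetic $\chi$-volume, a reduction from arbitrary metrics to product metrics, the assumed bound on each factor, and a strict numerical inequality comparing the heights of $\P^{n_{1}}\times\P^{n_{2}}$ and $\P^{n}$ with their Fubini-Study metrics.

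The \emph{product formula} I need asserts that for a product psh metric $\phi=p_{1}^{*}\phi_{1}+p_{2}^{*}\phi_{2}$ on $-K_{X}$,
\begin{equation*}
\widehat{\mathrm{vol}}_{\chi}(\overline{-\mathcal{K}_{\mathcal{X}}})=\widehat{\mathrm{vol}}_{\chi}(\overline{-\mathcal{K}_{\mathcal{X}_{1}}})\,\mathrm{vol}(-K_{X_{2}})+\mathrm{vol}(-K_{X_{1}})\,\widehat{\mathrm{vol}}_{\chi}(\overline{-\mathcal{K}_{\mathcal{X}_{2}}}).
\end{equation*}
This will follow from expanding the top arithmetic self-intersection $(p_{1}^{*}\overline{-\mathcal{K}_{\mathcal{X}_{1}}}+p_{2}^{*}\overline{-\mathcal{K}_{\mathcal{X}_{2}}})^{n+1}$ by multilinearity: $(p_{i}^{*}\overline{-\mathcal{K}_{\mathcal{X}_{i}}})^{k}$ vanishes for $k>n_{i}+1$ by dimension, leaving only two mixed terms to which the arithmetic projection formula applies. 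Since the induced volume form on $X$ decomposes as a product, $\phi$ will be volume-normalized iff each $\phi_{i}$ is, and a direct computation shows that the arithmetic Ding functional \eqref{eq:def of Ding Z} decomposes additively on product metrics: $\mathcal{\hat{D}}_{\Z}(\phi)=\mathcal{\hat{D}}_{\Z,1}(\phi_{1})+\mathcal{\hat{D}}_{\Z,2}(\phi_{2})$.

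To handle an \emph{arbitrary} volume-normalized positive-curvature metric on $-K_{X}$, I would bound $\widehat{\mathrm{vol}}_{\chi}$ by the corresponding supremum. When $X_{1}$ and $X_{2}$ are both K-polystable, Theorem \ref{thm:arithm Vol and K semi st} guarantees this supremum is attained at the unique vol-normalized K\"ahler--Einstein metric on $-K_{X}$, which by uniqueness decouples as a product of KE metrics on the factors, reducing to the product-metric case. For merely K-semistable factors one invokes a special degeneration to a K-polystable central fiber \`a la \cite{l-w-x} (mirroring the argument in the second half of the proof of Theorem \ref{thm:arithm Vol and K semi st}). Setting $h_{k}:=\widehat{\mathrm{vol}}_{\chi}(\overline{-\mathcal{K}_{\P_{\Z}^{k}}})$ and $v_{k}:=\mathrm{vol}(-K_{\P^{k}})=(k+1)^{k}/k!$, the assumed inequality for each $\mathcal{X}_{i}$ together with Fujita's bound $\mathrm{vol}(-K_{X_{i}})\leq v_{n_{i}}$ and the product formula will then give
\begin{equation*}
\widehat{\mathrm{vol}}_{\chi}(\overline{-\mathcal{K}_{\mathcal{X}}})\leq h_{n_{1}}v_{n_{2}}+v_{n_{1}}h_{n_{2}}.
\end{equation*}

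The proof is completed by the strict numerical inequality $h_{n_{1}}v_{n_{2}}+v_{n_{1}}h_{n_{2}}<h_{n}$ for all $n_{1},n_{2}\geq1$ with $n_{1}+n_{2}=n$, obtained from the explicit formula \eqref{eq:expl formul on p n} by writing $h_{k}=\tfrac{1}{2}v_{k}\bigl((k+1)H_{k}-k+\log(\pi^{k}/k!)\bigr)$ with $H_{k}=\sum_{j=1}^{k}1/j$, and combining the strict volume inequality $v_{n_{1}}v_{n_{2}}<v_{n}$ (Fujita strict, since $\P^{n_{1}}\times\P^{n_{2}}\not\cong\P^{n}$) with the specific logarithmic growth of the normalized height constants $h_{k}/v_{k}$. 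The main obstacle will be the reduction to product metrics: clean via KE uniqueness in the polystable case, but genuinely requiring a degeneration/limiting argument in the general K-semistable setting---or, alternatively, a direct proof of the equivalent superadditivity $\inf\mathcal{\hat{D}}_{\Z}\geq\inf\mathcal{\hat{D}}_{\Z,1}+\inf\mathcal{\hat{D}}_{\Z,2}$ via Fubini for the $-\log\int e^{-\phi}$ term and a Bedford--Taylor-type slicing inequality for the arithmetic volume.
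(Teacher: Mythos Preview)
Your overall strategy matches the paper's: reduce to $m=2$, use the product formula for $\widehat{\mathrm{vol}}_{\chi}$, apply the assumed bound on each factor together with Fujita's inequality, and then invoke the strict inequality for products of projective spaces. The K-polystable reduction to product metrics via the existence of a product K\"ahler--Einstein metric is also what the paper does.

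The genuine gap is in the K-semistable (non-polystable) case. Neither of your proposed routes works as stated. The degeneration argument from Theorem \ref{thm:arithm Vol and K semi st} only shows that $\mathcal{\hat D}_{\psi_0}$ is bounded below by a constant coming from the K-polystable central fiber $X_0$; it does \emph{not} show that the infimum on $X_1\times X_2$ is attained (or approximated) by product metrics on $X_1\times X_2$ itself. Passing to $X_0$ changes the variety, so you cannot conclude anything about product metrics on the original product. Your alternative ``Fubini plus slicing'' sketch is also insufficient: for a non-product $\phi$, neither $\int_{X}e^{-\phi}$ nor the energy term $\mathcal E_{\psi_0}(\phi)$ decomposes, and there is no evident slicing inequality giving the required superadditivity of $\mathcal{\hat D}_{\Z}$.

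The paper's device here is different and you should know it: one perturbs to the \emph{twisted} Ding functional $\mathcal{\hat D}_{\psi_0,\gamma}$ with $\gamma\in\,]0,1[$, which is coercive on each K-semistable factor and hence admits minimizers $\psi_\gamma^{(i)}$ solving the twisted K\"ahler--Einstein equation. The product $\psi_\gamma^{(1)}\oplus\psi_\gamma^{(2)}$ is then a twisted KE metric on $X_1\times X_2$ (for the product reference), hence minimizes the twisted Ding functional there. Letting $\gamma\to1$ and using a simple concavity/continuity argument (Lemma \ref{lem:element}) recovers the untwisted infimum as a limit of values at product metrics. This is the missing idea.

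A smaller remark: for the strict numerical inequality $h_{n_1}v_{n_2}+v_{n_1}h_{n_2}<h_n$ you propose a direct computation from formula \eqref{eq:expl formul on p n}. The paper does not do this; it routes through the toric machinery (the ``gap hypothesis'' for products in Section \ref{subsec:The-case-of-toric products} combined with Lemma \ref{lem:product is maximal implies thm} and Proposition \ref{prop:universal toric bound}). Your direct approach may well succeed, but the sketch you give (combining $v_{n_1}v_{n_2}<v_n$ with ``logarithmic growth of $h_k/v_k$'') is not yet a proof: since both factors in $h_n=v_n\cdot(h_n/v_n)$ move, you need a genuine two-variable estimate, not just monotonicity.
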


\begin{proof}
By a simple induction argument it is enough to consider the case when
$m=2.$ First note that, in general, given two polarized metrized
arithmetic varieties $(\mathcal{X}_{i},\overline{\mathcal{L}_{i}})$
of relative dimension $n_{i}$
\begin{equation}
\frac{\widehat{\text{vol}}_{\chi}\left(\rho_{1}^{*}\overline{\mathcal{L}_{1}}\otimes\rho_{2}^{*}\overline{\mathcal{L}_{2}}\right)}{\text{vol}\left(\rho_{1}^{*}\overline{\mathcal{L}_{1}}\otimes\rho_{2}^{*}\overline{\mathcal{L}_{2}}\right)}=\frac{\widehat{\text{vol}}_{\chi}\left(\overline{\mathcal{L}_{1}}\right)}{\text{vol}\left(\overline{\mathcal{L}_{1}}\right)}+\frac{\widehat{\text{vol}}_{\chi}\left(\overline{\mathcal{L}_{2}}\right)}{\text{vol}\left(\overline{\mathcal{L}_{2}}\right)}\label{eq:additivity of normalized height}
\end{equation}
 where $\rho_{1}$ and $\rho_{2}$ denote the natural morphisms from
$\mathcal{X}_{1}\times\mathcal{X}_{2}$ to $\mathcal{X}_{1}$ and
$\mathcal{X}_{2},$ respectively (as follows readily from formula
\ref{eq:def of xhi vol}).

Assume now that the inequality in Conjecture \ref{conj:height intro}
holds for $\overline{-\mathcal{K}_{\mathcal{X}_{1}}}$ and $\overline{-\mathcal{K}_{\mathcal{X}_{2}}}$.
Endow $-K_{X_{1}\times X_{2}}$ with the induced product metric (which
is volume-normalized, since the metrics on $-K_{X_{i}}$ are assumed
to be volume-normalized). The identity \ref{eq:additivity of normalized height}
yields 
\[
\widehat{\text{vol}}_{\chi}\left(\overline{-\mathcal{K}_{\mathcal{X}_{1}\times\mathcal{X}_{2}}}\right)=\widehat{\text{vol}}_{\chi}\left(\overline{-\mathcal{K}_{\mathcal{X}_{1}}}\right)\text{vol}\left(-K_{X_{2}}\right)+\widehat{\text{vol}}_{\chi}\left(\overline{-\mathcal{K}_{\mathcal{X}_{1}}}\right)\text{vol}\left(-K_{X_{1}}\right).
\]
 Accordingly, by assumption, 
\[
\widehat{\text{vol}}_{\chi}\left(\overline{-\mathcal{K}_{\mathcal{X}_{1}\times\mathcal{X}_{2}}}\right)\leq\widehat{\text{vol}}_{\chi}\left(\overline{-\mathcal{K}_{\P_{\Z}^{n_{1}}}}\right)\text{vol}\left(-K_{X_{2}}\right)+\widehat{\text{vol}}_{\chi}\left(\overline{-\mathcal{K}_{\P_{\Z}^{n_{2}}}}\right)\text{vol}\left(-K_{X_{1}}\right),
\]
where the projective spaces have been induced by the volume-normalized
Fubini-Study metric and we have used that $\widehat{\text{vol}}_{\chi}\left(\overline{-\mathcal{K}_{\P_{\Z}^{n}}}\right)$
is positive for any $n$ (as shown in \ref{lem:arithm vol of proj etc}).
Hence, applying Fujita's inequality \ref{eq:fujita intro}, yields
\[
\widehat{\text{vol}}_{\chi}\left(\overline{-\mathcal{K}_{\mathcal{X}_{1}\times\mathcal{X}_{2}}}\right)\leq\widehat{\text{vol}}_{\chi}\left(\overline{-\mathcal{K}_{\P_{\Z}^{n_{1}}}}\right)\text{vol}\left(-K_{\P_{\C}^{n_{1}}}\right)+\widehat{\text{vol}}_{\chi}\left(\overline{-\mathcal{K}_{\P_{\Z}^{n_{2}}}}\right)\text{vol}\left(-K_{\P_{\C}^{n_{2}}}\right).
\]
 But, the rhs above equals $\widehat{\text{vol}}_{\chi}\left(\overline{-\mathcal{K}_{\P_{\Z}^{n_{1}}\times\P_{\Z}^{n_{2}}}}\right)$
(by the identity \ref{eq:additivity of normalized height}), which
is strictly smaller than $\widehat{\text{vol}}_{\chi}\left(\overline{-\mathcal{K}_{\P_{\Z}^{n_{1}+n_{2}}}}\right),$
by the toric case, considered in Section \ref{subsec:The-case-of-toric products}.

All that remains is thus to show that the sup of $\widehat{\text{vol}}_{\chi}(\overline{-\mathcal{K}_{\mathcal{X}_{1}\times\mathcal{X}_{2}}})$
over all continuous volume-normalized metrics coincides with the sup
restricted to the ones which have positive curvature current and are
product metrics. First, as shown in the proof of Theorem \ref{thm:arithm Vol and K semi st}
we may restrict to those with positive curvature current. To prove
that we may restrict to product metrics first consider the case when
$(X_{i},-K_{X_{i}})$ are both K-polystable. They thus admit Kähler-Einstein
metrics and the corresponding product metric is Kähler-Einstein on
$X_{1}\times X_{2}$ and, as a consequence, realizes the sup of $(\overline{-\mathcal{K}_{\mathcal{X}_{1}\times\mathcal{X}_{2}}})^{n+1},$
by Theorem \ref{thm:arithm Vol and K semi st} (strictly speaking,
in the singular case the Kähler-Einstein metric is merely known to
be locally bounded, but it can, in a standard way, be approximated
by continuous ones). Finally, in the case when $(X_{i},-K_{X_{i}})$
are merely K-semistable we will use the following general observation.
If $X_{1}$ and $X_{2}$ are K-semistable Fano varieties over $\C,$
then the inf of the Ding functional (formula \ref{eq:def of Ding f})
corresponding to $X_{1}\times X_{2}$ coincides with the inf over
product metrics. To prove this first recall the definition of the
twisted Ding normalized functional $\mathcal{\hat{D}}_{\psi_{0},\gamma}$
corresponding to a given locally bounded psh metric $\psi_{0}$ and
$\gamma\in]0,1]:$ 
\[
\mathcal{\hat{D}}_{\psi_{0},\gamma}(\psi)=-\frac{1}{\mathrm{vol}(-K_{X})}\mathcal{E}_{\psi_{0}}(\psi)-\log\int_{X}e^{-\left(\gamma\psi+(1-\gamma)\psi_{0}\right)}
\]
By Hölder's inequality $\mathcal{\hat{D}}_{\psi_{0},\gamma}(\psi)$
is decreasing in $\gamma.$ Since, as shown in the proof of Theorem
\ref{thm:arithm Vol and K semi st}, $\mathcal{\hat{D}}_{\psi_{0},1}$
is bounded from below when $X$ is K-semistable, so is $\mathcal{\hat{D}}_{\psi_{0},\gamma}(\psi)$
for any $\gamma\in]0,1[.$ More precisely, $\mathcal{\hat{D}}_{\psi_{0},\gamma}(\psi)$
is coercive for any given $\gamma\in]0,1[$ (see the proof of \cite[Cor 3.6]{berm6})
and thus $\mathcal{\hat{D}}_{\psi_{0},\gamma}$ admits a minimizer
$\psi_{\gamma}$ and the minimizers are precisely the solutions to
the twisted Kähler-Einstein equation 
\[
\frac{(dd^{c}\psi)^{n}/n!}{\text{vol\ensuremath{(-K_{X})}}}=\frac{e^{-\left(\gamma\psi+(1-\gamma)\psi_{0}\right)}}{\int_{X}e^{-\left(\gamma\psi+(1-\gamma)\psi_{0}\right)}}
\]
 (see \cite{bbegz}). Thus, given two K-semistable Fano varieties
$X_{1}$ and $X_{2}$ and $\gamma\in]0,1[$ we may take two twisted
KE-metrics $\psi_{\gamma}^{(1)}$ and $\psi_{\gamma}^{(2)}$ on $-K_{X_{1}}$
and $-K_{X_{2}}$, respectively. The corresponding product metric
$\psi_{\gamma}$ on $-K_{X_{1}\times X_{2}}$ is a twisted KE-metric
and thus minimizes the twisted normalized Ding functional $\mathcal{\hat{D}}_{\psi_{0},\gamma}$
on $X_{1}\times X_{2}.$ Moreover, as $\gamma\rightarrow1$ 
\begin{equation}
\mathcal{\hat{D}}_{\psi_{0}}(\psi_{\gamma})\rightarrow\inf\mathcal{\hat{D}}_{\psi_{0}}.\label{eq:conv towards inf}
\end{equation}
Indeed, $\gamma\rightarrow\mathcal{\hat{D}}_{\psi_{0},\gamma}(\psi)$
is continuous and concave on $]0,1]$ for a fixed continuous metric
$\psi,$ by Hölders inequality. The convergence \ref{eq:conv towards inf}
thus follows from Lemma \ref{lem:element} below. Finally, since in
our setup $\psi_{\gamma}$ is a product metric it follows that the
inf of $\mathcal{\hat{D}}_{\psi_{0}}$ coincides with  the inf restricted
to product metrics, as desired.
\end{proof}
In the proof we used the following elementary result about convex
functions (applied to $-f)$:
\begin{lem}
\label{lem:element}Let $f(t)$ be a function $[0,1]\rightarrow]-\infty,\infty]$
of the form 

\[
f(t)=\sup_{p\in\mathcal{P}}\left(f_{p}(t)\right),
\]
 where $f_{p}(t)$ is a family of continuous convex functions on $[0,1],$
parametrized by a set $\mathcal{P}.$ Then $f(t)$ is continuous on
$[0,1].$ 
\end{lem}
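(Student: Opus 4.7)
The plan is to extract two structural properties of $f$ from the hypothesis and combine them. First, a pointwise supremum of convex functions is convex, so $f$ is convex on $[0,1]$. Second, a pointwise supremum of continuous functions is lower semi-continuous, so $f$ is lower semi-continuous. Full continuity will be squeezed out of these two observations.

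On the open interval $(0,1)$, continuity is automatic at every point where $f$ is finite, since any finite convex function on an open subinterval of $\mathbb{R}$ is locally Lipschitz and hence continuous. Thus the only nontrivial points are the two endpoints, and by symmetry I focus on $t_{0}=0$.

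Lower semi-continuity already gives $\liminf_{t\to 0^{+}} f(t)\geq f(0)$, so the task reduces to proving the matching upper bound $\limsup_{t\to 0^{+}} f(t)\leq f(0)$. Convexity of each $f_{p}$ yields the chord estimate
\[
f_{p}(t)\leq (1-t)\,f_{p}(0)+t\,f_{p}(1)\qquad\text{for }t\in[0,1],
\]
and taking the supremum over $p$, together with the elementary inequality $\sup_{p}(\alpha a_{p}+\beta b_{p})\leq \alpha\sup_{p}a_{p}+\beta\sup_{p}b_{p}$ valid for $\alpha,\beta\geq 0$, delivers the analogous chord bound $f(t)\leq (1-t)f(0)+t\,f(1)$ on all of $[0,1]$. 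Sending $t\to 0^{+}$ then gives $\limsup_{t\to 0^{+}} f(t)\leq f(0)$, and combined with lower semi-continuity this proves continuity at $0$. The case $t_{0}=1$ is identical.

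I do not expect any substantive obstacle; the lemma is a formal consequence of basic convex analysis. The only subtlety lies in the convention attached to the target $]-\infty,+\infty]$: at a point where $f(t_{0})=+\infty$, continuity in the extended sense amounts to $f(t)\to+\infty$, which is already encoded in lower semi-continuity and so requires no separate argument. In the intended application to $\mathcal{\hat{D}}_{\psi_{0},\gamma}$ the supremum stays finite on $[0,1]$, so this degenerate case does not in fact arise.
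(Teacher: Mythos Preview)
Your proof is correct and follows the same outline as the paper's: lower semi-continuity from the supremum of continuous functions, convexity from the supremum of convex functions, interior continuity from convexity, and then upper semi-continuity at the endpoints from a convexity inequality. The only difference is cosmetic---where you pass the chord inequality $f_p(t)\le(1-t)f_p(0)+tf_p(1)$ through the supremum to get $f(t)\le(1-t)f(0)+tf(1)$, the paper works directly with the convex $f$ via the subgradient inequality $f(1)\ge f(t)+(1-t)\partial f(t)$ together with monotonicity of $\partial f$; these are equivalent manifestations of convexity.
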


\begin{proof}
This is standard, but for completeness we provide a proof. First recall
that the sup of a family of continuous functions is lower semi-continuous.
Hence, it will be enough to show that $f(t)$ is upper semi-continuous
(usc). To this end first observe that since $t\mapsto f_{p}(t)$ is
convex it follows that $f(t)$ is also convex. But any convex function
on $[0,1]$ is usc. Indeed, $f$ is (Lipschitz) continuous on $]0,1[,$
since it is convex there. By symmetry, it is thus enough to prove
upper continuity at $t=1.$ Now, since $f(t)$ is convex we have,
given $t\in]0,1[,$ that
\[
f(1)\geq f(t)+(1-t)\partial f(t)
\]
 for any subgradient $\partial f(t)$ at $t,$ i.e. any one-sided
derivative at $t.$ But since $f(t)$ is convex  $\partial f(t)\geq\partial f(t_{0})$
for any fixed $t_{0}$ such that $t_{0}\leq t.$ Hence, $f(1)\geq f(t)+(1-t)\partial f(t_{0})$
and letting $t\rightarrow1$ thus shows that $f(1)$ is greater than
or equal to the limtsup of $f(t)$ as $t\rightarrow1,$ as desired. 
\end{proof}

\section{\label{sec:Sharp-height-inequalities}Sharp height inequalities in
the toric case}

We now specialize to the case when $X$ is toric Fano variety.

\subsection{The toric setup}

We start by recalling the notation for toric metrics employed in \cite{ber-ber}
and the relation to the canonical toric integral model. 

\subsubsection{The moment polytope $P(L)$}

Let $X$ be an $n-$dimensional complex projective toric variety,
i.e. a complex projective variety endowed with an action of the $n-$dimensional
complex torus $\C^{*n}$ with an open dense orbit. We shall denote
by $T_{c}$ the complex torus and by $T$ the real maximal compact
subtorus of $T_{c},$ i.e. $T=(S^{1})^{n}.$ Let $L$ be a toric ample
line bundle, i.e. an ample line bundle over $X$ endowed with a $T_{c}-$action
covering the action of $T_{c}$ on $X.$ It induces a bounded convex
polytope $P(L)$ in $\R^{d}$ with non-empty interior, defined as
follows. Consider the induced action of the group $T_{c}$ on the
space $H^{0}(X,kL)$ of global holomorphic sections of $kL\rightarrow X$
(for $k$ a given positive integer). Decomposing the action of $T_{c}$
according to the corresponding one-dimensional representations $e^{m},$
labeled by $m\in\Z^{n}:$ 
\begin{equation}
H^{0}(X,kL)=\oplus_{m\in B_{k}}\C e^{m}\label{eq:decomp of H not wrt monom}
\end{equation}
 the lattice polytope $P_{(X,L)}$ may be defined as the convex hull
of $k^{-1}B_{k}$ in $\R^{n}$, for $k$ sufficiently large. More
generally, by homogeneity, $P_{(X,L)}$ is defined for any ample $\Q-$line
bundle. 

In particular, if $X$ is Fano, then the polytope $P(-K_{X})$ has
vertices in $\Q^{n}$ and may be represented as follows:
\begin{equation}
P(-K_{X})=\left\{ p\in\R^{n}:\,\,\left\langle l_{F},p\right\rangle \geq-1,\,\,\forall F\right\} ,\label{eq:P in Fano case}
\end{equation}
where $F$ ranges over all facets of $P(-K_{X})$ and $l_{F}$ denotes
the unique primitive element in $\Z^{n}$ which is an interior normal
to the facet $F$ (i.e. $P(-K_{X})$ is the dual of the polytope with
primitive vertices $l_{F}).$ Conversely, any such polytope corresponds
to a Fano variety $X$ \cite{c-l-s,ber-ber}. 
\begin{example}
\label{exa:toric family}When $X=\P^{n}$ the polytope $P(-K_{X})$
is $(n+1)\left(\Sigma_{n}-(1,....,1)\right)$ where $\Sigma_{n}$
denotes the $n-$dimensional unit-simplex. An infinite family of two-dimensional
toric Fano varieties $X_{p,q},$ parametrized by two prime numbers
$p$ and $q,$ is obtained by letting $P(-K_{X_{p,q}})$ be the polytope
which is dual to the polytope with the four primitive vertices $(\pm p,\pm q).$
In particular, $\text{vol\ensuremath{(-K_{X_{p,q}})}}=2/(pq)$ tends
to zero when $pq$ tends to infinity. 
\end{example}

\begin{rem}
From an invariant point of view, the real vector space $\R^{n}$ above
arises as $M\otimes_{\Z}\R,$ where $M$ is the lattice $\text{Hom\ensuremath{(T_{c},\C^{*})}}$
of characters of the group $T_{c}$ (cf. \cite{c-l-s}).
\end{rem}

\subsubsection{\label{subsec:Logarithmic-coordinates-and}Logarithmic coordinates
and the Legendre transform $\phi^{*}$ of a metric $\phi$ on $L$}

Since $X$ is toric we can identify $T_{c}$ with its open orbit in
$X.$ Let $\mbox{Log }$be the map from $T_{c}$ to $\R^{n}$ defined
by 
\[
\text{Log:\,}T_{c}\rightarrow\R^{n},\,\,\,\mbox{Log}(z):=x:=(\log(|z_{1}|^{2}),...,\log(|z_{n}|^{2}).
\]
The real compact torus $T$ acts transitively on its fibers. We will
refer to $x$ as the (real) \emph{logarithmic coordinates} on $T_{c}.$
Let $L$ be a toric ample line bundle over $X$ and assume that $P$
contains the origin, $0\in P,$ and denote by $e^{0}$ the corresponding
$T-$invariant element in $H^{0}(X,kL).$ Any continuous $T-$invariant
metric $\left\Vert \cdot\right\Vert $ on $L$ induces a continuous
function on $\R^{n}$ that we shall denote by $\phi(x),$ defined
as
\[
\phi(x):=-\log\left(\left\Vert e^{0}\right\Vert ^{2}(z)\right),\,\,\,z\in T_{c}\Subset X,\,\,\,x:=\text{Log \ensuremath{z}}.
\]
Thus, in the present additive notation $\phi$ for metrics we have
$\phi(x)=\phi_{U}(z),$ when $U=T_{c},$ abusing notation slightly.
The Legendre transform of $\phi(x),$ which defines a lower-semicontinuous
convex function on $\R^{n}$ (taking values in $]-\infty,\infty])$
will be denoted by $\phi^{*}:$ 
\[
\phi^{*}(p):=\sup_{x\in\R^{n}}\left\langle p,x\right\rangle -\phi(x).
\]
A $T-$invariant continuous metric $\psi$ on $L$ is psh iff the
corresponding function $\psi(x)$ on $\R^{n}$ is convex (iff $\psi(x)=\psi^{**}(x)).$
We will denote by $\psi_{P(L)}$ the unique continuous convex function
on $\R^{n}$ whose Legendre transform is equal to $0$ on $P(L)$
and equal to $\infty$ on the complement of $P(L):$ 
\begin{equation}
\psi_{P(L)}(x):=\sup_{p\in P(L)}\left\langle p,x\right\rangle \,\,\,\,\,\,\,\,(\psi_{P(L)}^{*}=0\,\,\text{on \ensuremath{P,\,\,\psi_{P(L)}^{*}=\infty\,\,\text{on \ensuremath{P(L)^{c})}}}}\label{eq:def of psi P x}
\end{equation}
It corresponds to a continuous psh metric on $L$ (see the proof of
\cite[Prop 3.3]{ber-ber}) and it will be used as a canonical reference
metric in the present toric setup. It follows that for any other continuous
metric $\phi$ on $L$ 
\begin{equation}
\phi-\psi_{P(L)}\in L^{\infty}(\R^{n}),\,\,\,P(L)=\overline{\left\{ \phi^{*}<\infty\right\} .}\label{eq:P in terms of Legendre}
\end{equation}

\begin{rem}
From an invariant point of view the logarithm coordinates take value
in $N\otimes\R,$ where $N$ is the lattice $\text{Hom\ensuremath{(\C^{*},T_{c})}}$
of one-parameter subgroups of $T_{c},$ i.e. the dual of the lattice
$\text{Hom\ensuremath{(T_{c},\C^{*})}}$ of characters of $T_{c}.$ 
\end{rem}

\subsubsection{Pushing forward measures from $X$ to $\R^{n}$}

For any $T-$invariant continuous psh metric $\psi$ on $L$ the push-forward
of the measure $(dd^{c}\psi)^{n}/n!$ on $L$ under the map $\mbox{Log}$
is given by 
\[
\mbox{Log}\left(\frac{(dd^{c}\psi)^{n}}{n!}\right)=\det(\nabla^{2}\phi)dx,
\]
(since the integral along the $T^{n}-$fibers equals $(2\pi)^{n}).$
The measure in the right hand side is defined in the weak sense of
Alexandrov. Since the closure of the image of $\R^{n}$ under the
sub-gradient map of $\phi$ equals $P$ it follows that
\[
\text{vol}(L)=\int_{P}dy:=\text{Vol\ensuremath{(P)}},
\]
where $dy$ is Lebesgue measure. Next consider the case when $L=-K_{X}.$
Then 
\begin{equation}
e_{0}:=z_{1}\frac{\partial}{\partial z_{1}}\wedge\cdots\wedge z_{n}\frac{\partial}{\partial z_{n}}\label{eq:def of invariant section}
\end{equation}
 defines a $T_{c}-$invariant global holomorphic section of $-K_{X},$
trivializing $-K_{X}$ over $U:=\C^{*n}.$ We can thus identify a
continuous metric $\phi$ on $-K_{X}$ with the corresponding function
$\phi_{U}$ on $\C^{*n}$ (formula \ref{eq:def of phi U}) and volume
form on $X$ (formula \ref{eq:volume form e minus phi U}) expressed
as follows on $\C^{*n},$ with respect to the local holomorphic coordinate
$\log z:$

\[
e^{-\phi_{U}}(\frac{i}{2})^{n}d(\log z_{1})\wedge d(\log\overline{z}_{1})\wedge\cdots\wedge d(\log z_{n})\wedge d(\log\overline{z}_{n})
\]
 symbolically denoted by $e^{-\phi}.$ Using again that the integral
along the $T^{n}-$fibers equals $(2\pi)^{n}$ yields 
\begin{equation}
\int_{X}e^{-\phi}=\pi^{n}\int_{\R^{n}}e^{-\phi(x)}dx.\label{eq:integral of exp minus phi in toric}
\end{equation}

\subsubsection{K-semistability and toric Kähler-Einstein metrics}

We recall the following result, which is a combination of the results
\cite[Thm 1.2]{ber-ber} and \cite[Cor 1.2]{ber0} (which are formulated
in terms of $T_{c}-$equivariant K-polystability and K-polystability,
respectively). 
\begin{prop}
\label{prop:toric ke}Let $X$ be a toric Fano variety. The following
is equivalent:
\begin{itemize}
\item $X$ is K-semistable
\item $X$ is K-polystable
\item $X$ admits a $T-$invariant Kähler-Einstein metric
\item The barycenter of $P(-K_{X})$ coincides with the origin $0.$
\end{itemize}
\end{prop}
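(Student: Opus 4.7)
The plan is to close the loop $(\text{K-polystable}) \Rightarrow (\text{K-semistable}) \Rightarrow (\text{barycenter of } P(-K_X) = 0) \Rightarrow (\text{T-invariant KE exists}) \Rightarrow (\text{K-polystable})$. The first implication is tautological from the definitions. Together with Theorem \ref{thm:arithm Vol and K semi st}, which asserts that K-polystability is equivalent to the supremum of $\widehat{\mathrm{vol}}_\chi$ being attained at a locally bounded KE metric in $\mathrm{PSH}(-K_X)$, only the three remaining implications require genuine work.

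For $(\text{K-semistable}) \Rightarrow (\text{barycenter}=0)$, I would specialize to \emph{product} test configurations $(\mathscr{X},\mathscr{L}) = (X\times\C,\,\mathrm{pr}_1^*(-K_X))$ with $\C^*$-action induced by a one-parameter subgroup $\xi$ of the torus $T_c$. Using the intersection-theoretic formula for $\mathrm{DF}$ recorded just before the statement, a direct calculation (integration by parts on $\R^n$ against the measure $dx$ restricted to $P := P(-K_X)$) identifies $\mathrm{DF}(\mathscr{X},\mathscr{L})$ with a positive multiple of the classical toric Futaki invariant $\langle \xi,\,\mathrm{bar}(P)\rangle$, where $\mathrm{bar}(P)$ denotes the barycenter of $P$ with respect to Lebesgue measure. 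Applying K-semistability to both $\xi$ and $-\xi$ forces $\langle \xi,\mathrm{bar}(P)\rangle = 0$ for every $\xi\in\R^n$, and hence $\mathrm{bar}(P)=0$.

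For the central implication $(\mathrm{bar}(P)=0) \Rightarrow (\text{KE exists})$, I would work through the Legendre correspondence recalled in Section \ref{subsec:Logarithmic-coordinates-and}. A $T$-invariant continuous psh metric $\phi$ on $-K_X$ corresponds to a convex function $\phi(x)$ on $\R^n$ with $\phi - \psi_{P(L)}$ bounded, and the KE equation $(dd^c\phi)^n/n! = \mathrm{vol}(-K_X)\, e^{-\phi}$ translates into the real Monge-Amp\`ere equation $\det(\nabla^2\phi)\,dx = c\, e^{-\phi}\,dx$ on $\R^n$. Minimizing the toric Ding functional $\mathcal{\hat D}_{\psi_{P}}$ (the functional \eqref{eq:def of Ding f}) over the space of such convex functions produces a minimizer if and only if the functional is coercive modulo translations. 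A standard toric coercivity estimate (via $\int_{\R^n} e^{-\phi}dx$ and the Legendre identity $\mathcal{E}_{\psi_P}(\phi) = -\int_P \phi^*\,dp$ modulo constants) shows that coercivity is equivalent to $\mathrm{bar}(P)=0$; here one splits $\phi$ into its mean part and its gradient part and applies a John-type normalization to control the complement of product test configurations. This is the main technical point, carried out in \cite{ber-ber} in the singular toric setting; the minimizer is a priori only in $\mathcal{E}^1(-K_X)$ but is locally bounded by \cite{bbegz} and in fact continuous by \cite[Prop 4.1]{c-g-s} (Remark \ref{rem:KE cont}).

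Finally, $(\text{KE exists}) \Rightarrow (\text{K-polystable})$ follows from \cite[Cor 1.2]{ber0}: the existence of a KE metric on a Fano variety forces K-polystability, via the fact that a KE metric minimizes the Ding functional and the Ding-slope inequality $\mathrm{DF}(\mathscr{X},\mathscr{L})\geq \lim_t t^{-1}\mathcal{\hat D}_{\psi_0}(\phi_t)$ for the geodesic ray associated to any special test configuration (as used in the proof of Theorem \ref{thm:arithm Vol and K semi st} above). The main obstacle I expect is the analytic step in the central implication: establishing the coercivity of the toric Ding functional under the barycenter hypothesis in the singular case, where one cannot rely on smoothness of $X$ to control the singular boundary behavior of $\phi^*$ on $\partial P$.
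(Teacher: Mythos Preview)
The paper does not supply its own proof of this proposition: it simply records it as a combination of \cite[Thm 1.2]{ber-ber} (which gives the equivalence of the barycenter condition, the existence of a $T$-invariant K\"ahler--Einstein metric, and $T_c$-equivariant K-polystability) and \cite[Cor 1.2]{ber0} (which upgrades K\"ahler--Einstein existence to full, non-equivariant K-polystability). Your loop is exactly the route those references take, and your sketches of the three nontrivial implications are correct.

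One remark: your appeal to Theorem \ref{thm:arithm Vol and K semi st} is unnecessary and mildly circular in spirit, since the K-polystability statement there relies on the singular Yau--Tian--Donaldson correspondence \cite{li1,l-x-z}, which in the toric case is precisely what \cite{ber-ber} establishes directly. Since you then argue the three implications independently of that theorem, this does no damage, but you should simply drop the reference to it.
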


\subsubsection{The arithmetic $\chi-$volume of a toric metric }

Any toric ample line bundle $L\rightarrow X$ admits a canonical integral
model $\mathcal{L}\rightarrow\mathcal{X}$ over $\Z$ with $\mathcal{X}$
normal (see \cite[Section 2]{ma} and \cite[Def 3.5.6]{b-g-p-s}).

The following result is a special case of the main result of \cite[Thm 3]{b-g-p-s}
(combined with Lemma \ref{lem:arith vol in terms of pphi}):
\begin{prop}
\label{prop:prop arithm vol in toric case}Let $L\rightarrow X$ be
an ample toric line bundle and denote by $(\mathcal{X},\mathcal{L})$
its canonical toric model over $\Z.$ Assume that $\phi$ is a continuous
$T-$invariant metric on $L.$ Then
\[
2\widehat{\mathrm{vol}}_{\chi}\left(\mathcal{L},\phi\right)=-\int_{P(L)}\phi^{*}dy.
\]
 
\end{prop}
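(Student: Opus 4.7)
The plan is to reduce the identity to the case of a continuous $T$-invariant psh metric on $L$ and then to invoke the main toric Arakelov-theoretic formula of Burgos-Gil–Philippon–Sombra applied to the canonical toric integral model.

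First, by Lemma \ref{lem:arith vol in terms of pphi},
\[
\widehat{\mathrm{vol}}_{\chi}(\mathcal{L},\phi)=\frac{(\mathcal{L},P\phi)^{n+1}}{(n+1)!},
\]
so it is enough to establish the identity with $\phi$ replaced by its psh envelope $P\phi$. When $\phi$ is $T$-invariant, so is $P\phi$: this follows from the sup-characterization in formula \ref{eq:def of Pphi}, since pullback by any $t\in T$ preserves both the psh property and the constraint $\psi\le\phi$, together with the uniqueness of the envelope. Continuity of $P\phi$ on the normal variety $X$ is the regularity fact recalled right after formula \ref{eq:def of Pphi}. Next I would check that $(P\phi)^{*}=\phi^{*}$ on $\R^{n}$: the inequality $(P\phi)^{*}\ge\phi^{*}$ is immediate from $P\phi\le\phi$, while for the reverse one uses the convex bi-conjugate $\phi^{**}$ of $\phi$ in logarithmic coordinates. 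The bound $|\phi-\psi_{P(L)}|\le C$ together with the convexity of $\psi_{P(L)}$ gives $\psi_{P(L)}-C\le\phi^{**}\le\phi$, so $\phi^{**}-\psi_{P(L)}$ is bounded and $\phi^{**}$ corresponds to a continuous $T$-invariant psh metric $\le\phi$. Maximality of $P\phi$ then forces $\phi^{**}\le P\phi$, so $(P\phi)^{*}\le(\phi^{**})^{*}=\phi^{*}$.

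Second, I would apply \cite[Thm 3]{b-g-p-s} to the canonical toric model $(\mathcal{X},\mathcal{L})$ of \cite[Def 3.5.6]{b-g-p-s} endowed with the continuous semipositive $T$-invariant metric $P\phi$. That theorem expresses the top arithmetic self-intersection number of a $T$-invariant semipositively metrized toric line bundle as a sum of local contributions, each given by an integral over $P(L)$ of the associated roof function; for the canonical integral model the non-archimedean contributions vanish and the only remaining term is the archimedean one, whose roof function is precisely $-(P\phi)^{*}$ up to the factor of $1/2$ attached to the unique archimedean place of $\mathrm{Spec}\,\Z$. Combining this with the identification $(P\phi)^{*}=\phi^{*}$ established above and with the equality $\widehat{\mathrm{vol}}_{\chi}(\mathcal{L},P\phi)=(\mathcal{L},P\phi)^{n+1}/(n+1)!$ from the arithmetic Hilbert–Samuel theorem yields
\[
2\widehat{\mathrm{vol}}_{\chi}(\mathcal{L},\phi)=-\int_{P(L)}\phi^{*}\,dy.
\]

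The main obstacle, in essence a bookkeeping one, is aligning the conventions of \cite{b-g-p-s} (local versus global heights, roof functions versus Legendre transforms, the choice of Haar measure on $T$, and the $\tfrac{1}{2}\log|\cdot|^{2}$ Arakelov normalization at the archimedean place that accounts for the factor of $2$ in the statement) with the conventions used in this paper. Once this dictionary is in place, the argument is essentially a direct quotation of \cite[Thm 3]{b-g-p-s}, made possible on non-psh metrics by the envelope reduction supplied by Lemma \ref{lem:arith vol in terms of pphi}.
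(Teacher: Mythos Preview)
Your proposal is correct and follows the same route as the paper: the statement is presented as a special case of \cite[Thm 3]{b-g-p-s} combined with Lemma~\ref{lem:arith vol in terms of pphi}, and you have simply filled in the intermediate steps (the $T$-invariance of $P\phi$ and the identity $(P\phi)^{*}=\phi^{*}$) that the paper leaves implicit. The paper also sketches a second, analytic route via the fact that the monomial basis $e^{m}$ generates the integral lattice and is orthonormal for the $L^{2}$-norm induced by $\psi_{P(L)}$, which together with \cite[Thm A]{b-b} and \cite[Prop 2.9]{ber-ber} gives $2\widehat{\mathrm{vol}}_{\chi}(\mathcal{L},\phi)=\mathcal{E}_{\psi_{P(L)}}(\phi)=-\int_{P(L)}\phi^{*}\,dy$; this avoids the full machinery of \cite{b-g-p-s} at the cost of invoking the energy formula from \cite{ber-ber}.
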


An alternative analytic proof of this formula can also be given, using
that the integral lattice $H^{0}(\mathcal{X},k\mathcal{L})$ in $H^{0}(X,kL)$
is generated by the $T_{c}-$equivariant bases $e^{m}$ appearing
in the decomposition \ref{eq:decomp of H not wrt monom} \cite{ma}.
Since this basis is ortonormal wrt the $L^{2}-$norm on $H^{0}(X,kL)$
induced by the metric $\psi_{P(L)}$ on $L,$ defined by formula \ref{eq:def of psi P x}
and the Haar measure on the unit-torus $T\Subset X,$ applying \cite[Thm A]{b-b}
yields 

\begin{equation}
\widehat{2\text{vol}}\left(\mathcal{L},\phi\right)=\mathcal{E}_{\psi_{P(L)}}(\phi).\label{eq:toric arithm vol as beautiful E}
\end{equation}
 When $\phi$ is toric the right hand side above coincides, by \cite[Prop 2.9]{ber-ber},
with the right hand side of the formula in the previous proposition. 

\subsubsection{Arithmetic toric Fano varieties}

Now assume that $X$ is a toric Fano variety, so that $-K_{X}$ defines
an ample $\Q-$line bundle. Then the canonical integral model $\mathcal{X}$
of $X$ over $\Z$ is a normal arithmetic Fano variety, i.e. the relative
anti-canonical divisor $-\mathcal{K}$ on $\mathcal{X}$ defines a
relatively ample $\Q-$line bundle on $\mathcal{X}.$ Indeed, $-\mathcal{K}$
coincides with the canonical integral model $\mathcal{L}$ of $-K_{X}.$
This follows (just as in the function field case considered in \cite[Lemma 2.2]{ber0})
from the fact that the fibers of the structure morphism $\mathcal{X}\rightarrow$$\text{Spec \ensuremath{\Z} }$are
reduced and irreducible. 

\subsection{\label{subsec:Proof-of-Theorem main}Proof of Theorem \ref{thm:main toric intro}}

Given a Fano variety $X,$ let $\phi$ be a continuous metric on $-K_{X}$
which is volume-normalized. We will prove the following more general
formulation of the inequality in Theorem \ref{thm:main toric intro}
(where the psh assumption on $\phi$ has been dispensed with): 

\[
\widehat{\text{vol}}_{\chi}\left(\mathcal{-}\mathcal{K},\phi\right)\leq\widehat{\text{vol}}_{\chi}\left(-\overline{\mathcal{K}_{\P_{\Z}^{n}}}\right)
\]
 where the metric on $-K_{\P^{n}}$ is the one induced by the volume-normalized
Fubini-Study metric.

A $T-$invariant continuous metric $\phi$ will, as above, be identified
with a  function $\phi(x)$ on $\R^{n}.$ If $\phi$ is moreover volume-normalized
Prop \ref{prop:prop arithm vol in toric case} gives

\begin{align}
2\widehat{\text{vol}}_{\chi}\left(\mathcal{-}\mathcal{K},\phi\right)/\mathrm{vol}(-K_{X})= & -\mathcal{\hat{D}}_{\Z}(\phi)=-\mathcal{\hat{D}}_{\psi_{P}}(\phi)\label{eq:Ding Z in toric case}\\
= & -\int_{P}\phi^{*}dy/\mathrm{vol}(-K_{X})+\log\int_{\R^{n}}e^{-\phi(x)}dx+n\log\pi,\nonumber 
\end{align}

where $\mathcal{\hat{D}}_{\Z}(\phi)$ and $\mathcal{\hat{D}}_{\psi_{P}}(\phi)$
are the Ding type functionals defined by formula \ref{eq:def of Ding Z}
and formula \ref{eq:def of Ding on non psh}, respectively, and we
have used formula \ref{eq:integral of exp minus phi in toric}.

We start by recording the following explicit formula for the arithmetic
volume of projective space $\P^{n}$, endowed with a volume normalized
Kähler-Einstein metric (which may be assumed to be the metric induced
by the Fubini-Study metric).
\begin{lem}
\label{lem:arithm vol of proj etc}The following formulas hold for
the metrics $\phi_{KE}$ on the anti-canonical line bundles of $\P_{\C}^{n}$
induced by a volume normalized toric Kähler-Einstein metric:
\[
X=\P_{\C}^{n}\implies2\widehat{\mathrm{vol}}_{\chi}\left(\mathcal{-K},\phi_{KE}\right)=\frac{(n+1)^{n}}{n!}\left((n+1)\sum_{k=1}^{n}k^{-1}-n+\log(\frac{\pi^{n}}{n!})\right)>0
\]
 
\end{lem}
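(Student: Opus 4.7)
The plan is to reduce the identity to the explicit height computation \ref{eq:expl formul on p n} from the introduction (itself the Gillet--Soul\'e formula \cite[\S 5.4]{g-s} for $\P_\Z^n$ with the volume-normalized Fubini--Study metric). Since $\phi_{KE}$ is a continuous psh metric, the arithmetic Hilbert--Samuel identity \ref{eq:vol chi as intersection} gives
\[
2\widehat{\mathrm{vol}}_\chi(-\mathcal{K},\phi_{KE}) \;=\; \frac{2\,(\overline{-\mathcal{K}_{\P_\Z^n}})^{n+1}}{(n+1)!},
\]
so substituting \ref{eq:expl formul on p n} and simplifying $(n+1)^{n+1}/(n+1)!=(n+1)^n/n!$ reproduces the identity in the lemma verbatim.

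For a self-contained toric derivation one can instead use formula \ref{eq:Ding Z in toric case}. Taking $\phi_{FS}(x) = -\sum_{i=1}^{n} x_i + (n+1)\log\!\bigl(1+\sum_{i=1}^{n} e^{x_i}\bigr)$ as the Fubini--Study metric on $-K_{\P^n}$ in logarithmic coordinates (relative to the $T_c$-invariant section \ref{eq:def of invariant section}), a direct Legendre-transform computation yields $\phi_{FS}^*(p)=\sum_{i=0}^{n} q_i\log q_i - (n+1)\log(n+1)$ on the polytope $P=\{p_i\geq -1,\ \sum_i p_i\leq 1\}$, where $q_i=p_i+1$ and $q_0=(n+1)-\sum_{i=1}^{n} q_i$. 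A Dirichlet-type evaluation of $\int_{\Sigma_n} q_0^s\,dq$ (differentiated in $s$ at $s=1$), together with the elementary identity $(n+1)(H_{n+1}-1)=(n+1)H_n - n$, then gives
\[
-\int_P \phi_{FS}^*\, dp \;=\; \frac{(n+1)^n}{n!}\bigl((n+1)H_n - n\bigr),
\]
so that Proposition \ref{prop:prop arithm vol in toric case} produces $2\widehat{\mathrm{vol}}_\chi(-\mathcal{K},\phi_{FS})$. A further Dirichlet integral combined with \ref{eq:integral of exp minus phi in toric} gives $\int_X e^{-\phi_{FS}}=\pi^n/n!$, so volume-normalization forces $\phi_{KE} = \phi_{FS} + \log(\pi^n/n!)$; the scaling relation \ref{eq:scaling of chi vol} then contributes the $\log(\pi^n/n!)\cdot(n+1)^n/n!$ term and assembles the full expression.

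For strict positivity, note that $H_n \geq 1$ gives $(n+1)H_n - n \geq 1$ for all $n\geq 1$, so the bracketed factor is bounded below by $1+\log(\pi^n/n!)$, which is positive by inspection for small $n$; for large $n$, the combined asymptotics $H_n \sim \log n + \gamma$ and Stirling's formula yield a bracket of leading order $n(\gamma+\log\pi)+O(\log n) > 0$, so positivity persists. The only delicate point in the self-contained derivation is bookkeeping for the $T_c$-invariant trivialization \ref{eq:def of invariant section} (which shifts the convex potential by the linear term $-\sum x_i$ relative to the more familiar $(n+1)\log(1+\sum e^{x_i})$ expression) and the correct symmetric placement of the moment polytope around the origin; the direct route through \ref{eq:expl formul on p n} bypasses this issue entirely and is the preferred approach.
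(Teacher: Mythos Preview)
Your primary route---invoking \ref{eq:vol chi as intersection} and then the Gillet--Soul\'e formula \ref{eq:expl formul on p n}---is exactly the paper's approach. The paper spells out the intermediate steps: it quotes the height $h_{FS}=\tfrac{1}{2}\sum_{k=1}^n\sum_{m=1}^k m^{-1}$ of $(\mathcal{O}(1),\phi_{FS})$ on $\P_{\Z}^n$ from \cite{g-s}, passes to $-\mathcal{K}=(n+1)\mathcal{O}(1)$ via the homogeneity \ref{eq:vol chi tensor}, volume-normalizes via the scaling relation \ref{eq:scaling of chi vol} together with $\int_{\P^n} e^{-(n+1)\phi_{FS}}=\pi^n/n!$, and finally rewrites the double sum as $(n+1)H_n-n$ by changing the order of summation. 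Your self-contained second derivation via Proposition \ref{prop:prop arithm vol in toric case} and an explicit Legendre transform is a genuinely different route not taken in the paper; it trades the dependence on Gillet--Soul\'e for a direct Dirichlet-integral computation on the simplex, which is more laborious but has the virtue of staying entirely inside the toric framework.

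Your positivity argument, however, has a gap. The bound $(n+1)H_n-n\geq 1$ yields a bracket $\geq 1+\log(\pi^n/n!)$, but this lower bound is already negative at $n=8$ (since $\pi^8/8!<e^{-1}$), so ``inspection for small $n$'' and ``Stirling asymptotics for large $n$'' do not meet unless you supply an explicit crossover with effective error terms. The paper sidesteps this entirely by deferring positivity to a clean one-step induction carried out inside the proof of Lemma \ref{lem:product is maximal implies thm}: one checks $n=1$ directly and then verifies a recursion of the form
\[
2\widehat{\mathrm{vol}}_\chi\bigl(\overline{-\mathcal{K}_{\P_{\Z}^{n+1}}}\bigr)=\Bigl(\tfrac{n+2}{n+1}\Bigr)^{n+1} 2\widehat{\mathrm{vol}}_\chi\bigl(\overline{-\mathcal{K}_{\P_{\Z}^{n}}}\bigr)+R_n,
\]
where the remainder $R_n$ is shown to exceed $\bigl(\tfrac{n+2}{n+1}\bigr)^{n+1}\log(\pi/2)>0$ for all $n\geq 1$.
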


\begin{proof}
First consider the case when $X=\P_{\C}^{n},$ whose canonical integral
model is given by $\mathcal{X}=\P_{\Z}^{n}.$ The canonical model
of the anti-canonical line bundle of $\P_{\C}^{n}$ is given by $\mathcal{O}(1)^{\otimes n+1}\rightarrow\P_{\Z}^{n}.$
As shown in \cite[ §5.4]{g-s} (using the induction formula for the
height; see also \cite[Prop 3.10]{sou}) the height $h_{FS}$ of $\mathcal{O}(1)\rightarrow\P_{\Z}^{n}$
endowed with the Fubini-Study metric $\phi_{FS}$ is given by 
\[
h_{FS}=\frac{1}{2}\sum_{k=1}^{n}\sum_{m=1}^{k}m^{-1}.
\]
Since $(n+1)\phi_{FS}$ defines a Kähler-Einstein metric on $-K_{\P^{n}}$
and $\pi^{-n}\int_{\P^{n}}e^{-(n+1)\phi_{FS}}=1/n!$ this gives
\[
2\widehat{\text{vol}}_{\chi}\left(-\mathcal{K},\phi_{KE}\right)-n\log\pi=(n+1)^{n+1}\frac{h_{FS}}{(n+1)!}+\frac{(n+1)^{n}}{n!}\log(\frac{1}{n!})=\frac{(n+1)^{n}}{n!}\left(h_{FS}+\log(\frac{1}{n!})\right),
\]
 using formula \ref{eq:vol chi as intersection} in the first term,
combined with the homogeneity property \ref{eq:vol chi tensor} and,
in the second term, the scaling property \ref{eq:scaling of chi vol}.
Rewriting the formula for $h_{FS}$ above as a triangle sum and changing
the order of summation then concludes the proof of the formula of
the lemma. The last positivity statement will be shown in the course
of the proof of Lemma \ref{lem:product is maximal implies thm}.
\end{proof}
The key ingredient in the proof of Theorem \ref{thm:main toric intro}
is the following universal bound on the arithmetic volume, in terms
of the ordinary volume:
\begin{prop}
\label{prop:universal toric bound}For any $n-$dimensional toric
Fano variety $X$ which is K-semistable, the following bound holds
for any volume-normalized continuous metric $\phi$ on $-K_{X},$
\[
2\widehat{\mathrm{vol}}_{\chi}\left(\mathcal{-}\mathcal{K},\phi\right)\leq-\mathrm{vol}(X)\log\left(\frac{\mathrm{vol}(X)}{(2\pi^{2})^{n}}\right),\,\,\mathrm{vol}(X):=\mathrm{vol}(-K_{X}).
\]
\end{prop}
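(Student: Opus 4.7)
The plan combines Proposition \ref{prop:prop arithm vol in toric case}, which recasts the $\chi$-arithmetic volume of a toric metric as an integral of the Legendre transform on the moment polytope, with the functional Blaschke--Santal\'o inequality applied in logarithmic coordinates. First I reduce to a $T$-invariant metric: by Proposition \ref{prop:toric ke}, a K-semistable toric Fano variety is K-polystable and carries a $T$-invariant K\"ahler--Einstein metric $\phi_{KE}$, which by Theorem \ref{thm:arithm Vol and K semi st} attains the supremum of $\widehat{\mathrm{vol}}_{\chi}$ over all volume-normalized metrics in $\mathcal{C}^{0}(-K_{X})\cap\mathrm{PSH}(-K_{X})$. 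For an arbitrary continuous volume-normalized $\phi$, the identity $\widehat{\mathrm{vol}}_{\chi}(\phi)=\widehat{\mathrm{vol}}_{\chi}(P\phi)$ of Lemma \ref{lem:arith vol in terms of pphi} together with the scaling law \ref{eq:scaling of chi vol} (applied to shift $P\phi$ by $c=\log\int e^{-P\phi}\geq 0$) produces a volume-normalized psh metric $\psi$ with $\widehat{\mathrm{vol}}_{\chi}(\phi)\leq\widehat{\mathrm{vol}}_{\chi}(\psi)\leq\widehat{\mathrm{vol}}_{\chi}(\phi_{KE})$, so it suffices to prove the inequality for a $T$-invariant $\phi$.

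For such a $\phi$, identified via log-coordinates with a continuous function on $\mathbb{R}^{n}$, Proposition \ref{prop:prop arithm vol in toric case} combined with \ref{eq:integral of exp minus phi in toric} recasts the claim as
\[
\int_{P}\phi^{*}(p)\,dy\;\geq\;V\log\bigl(V/(2\pi^{2})^{n}\bigr),\qquad V:=\mathrm{vol}(X),
\]
given the volume-normalization $\int_{\mathbb{R}^{n}}e^{-\phi(x)}\,dx=\pi^{-n}$. By Proposition \ref{prop:toric ke}, K-semistability is equivalent to $b_{P}=0$, and this buys the crucial translation freedom: under $\phi(x)\mapsto\phi(x-a)$ the Legendre transform shifts by $\phi^{*}(p)\mapsto\phi^{*}(p)+\langle a,p\rangle$, so $\int_{P}(\phi(\cdot-a))^{*}\,dy=\int_{P}\phi^{*}\,dy+V\langle a,b_{P}\rangle=\int_{P}\phi^{*}\,dy$, while $\int_{\mathbb{R}^{n}}e^{-\phi}\,dx$ is unchanged by change of variables.

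Replacing $\phi$ by its convex envelope $\phi^{**}\leq\phi$ preserves $\phi^{*}=(\phi^{**})^{*}$ and only enlarges $\int e^{-\phi^{**}}\geq\int e^{-\phi}=\pi^{-n}$. Using the translation freedom just established, I arrange that the barycenter of the probability measure $e^{-\phi^{**}(x)}\,dx/\int e^{-\phi^{**}}$ lies at the origin---well defined because $\phi^{**}$ is coercive thanks to $\phi-\psi_{P}\in L^{\infty}$ (cf.\ \ref{eq:P in terms of Legendre}). The centred functional Santal\'o inequality of Artstein-Avidan--Klartag--Milman (equivalently the Fradelizi--Meyer functional form) then yields
\[
\Bigl(\int_{\mathbb{R}^{n}}e^{-\phi^{**}(x)}\,dx\Bigr)\cdot\Bigl(\int_{P}e^{-\phi^{*}(p)}\,dp\Bigr)\;\leq\;(2\pi)^{n},
\]
whence $\int_{P}e^{-\phi^{*}}\,dp\leq(2\pi)^{n}\pi^{n}=(2\pi^{2})^{n}$. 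Jensen's inequality applied to the convex function $t\mapsto e^{-t}$ against the probability measure $dy/V$ on $P$ finally gives $\exp\bigl(-V^{-1}\!\int_{P}\phi^{*}\,dy\bigr)\leq V^{-1}\!\int_{P}e^{-\phi^{*}}\,dy\leq(2\pi^{2})^{n}/V$, which rearranges to $\int_{P}\phi^{*}\,dy\geq V\log(V/(2\pi^{2})^{n})$, as desired.

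The main obstacle will be the Santal\'o step: one must identify the correct centred form of the functional Santal\'o inequality and recognise that the sole consequence $b_{P}=0$ of K-semistability supplies exactly the translation freedom needed to verify its centring hypothesis while leaving the target quantity $\int_{P}\phi^{*}\,dy$ invariant.
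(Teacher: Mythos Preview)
Your proof is correct and follows essentially the same route as the paper: reduce to a $T$-invariant metric via the toric K\"ahler--Einstein metric, exploit $b_P=0$ (from Proposition~\ref{prop:toric ke}) to gain translation freedom, centre the measure, and then combine the functional Santal\'o inequality with Jensen. The only cosmetic difference is that the paper packages everything through the Ding functional $-\tfrac{1}{V}\int_P\phi^*\,dy+\log\int e^{-\phi}$ (which is manifestly translation invariant once $b_P=0$) and reduces to convex $\phi$ up front, whereas you track $\widehat{\mathrm{vol}}_\chi$ and $\int e^{-\phi}$ separately and insert the $\phi^{**}$ step afterward.
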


\begin{proof}
First recall that, as shown in the beginning of the proof of Theorem
\ref{thm:arithm Vol and K semi st}, it is equivalent to establish
the upper bound for $-\mathcal{\hat{D}}_{\psi_{P}}(\phi)$ when $\phi$
is a continuous psh metric on $L.$ Since $X$ is assumed K-semistable
it follows from Prop \ref{prop:toric ke} that $X$ admits a $T-$invariant
Kähler-Einstein metric. In general, a Kähler-Einstein metric $\phi$
on $-K_{X}$ minimizes the normalized Ding functional $\mathcal{\hat{D}}_{\psi_{0}}$
\cite{bbegz}. Thus in the toric case the infimum of $\mathcal{\hat{D}}_{\psi_{0}}$
coincides with the infimum over all continuous $T-$invariant psh
metrics. As explained in Section \ref{subsec:Logarithmic-coordinates-and}
such a metric may be identified with a convex function $\phi(x)$
on $\R^{n}$ satisfying $\phi-\psi_{P}\in L^{\infty}(\R^{n}).$ By
formula \ref{eq:Ding Z in toric case} it will be enough to show that
for such convex functions
\begin{equation}
-\int_{P}\phi^{*}dy/V+\log\int_{\R^{n}}e^{-\phi(x)}dx\leq-\log V+n\log(2\pi),\,\,\,V:=\text{vol}(-K_{X}).\label{eq:pf of universal}
\end{equation}
 Since $0$ is contained in the interior of $P$ the measure $e^{-\phi}dx$
on $\R^{n}$ has finite moments. Recall that, by Prop \ref{prop:toric ke}
the barycenter of $P$ coincides with $0\in\R^{n}$ and, as a consequence,
the left hand side in inequality \ref{eq:pf of universal} is invariant
under translations of $\phi$, $\phi(x)\mapsto\phi(x+a)$ for any
given $a\in\R^{n}$ \cite[Lemma 2.14]{ber-ber}. As a consequence,
in order to prove the inequality \ref{eq:pf of universal} we may
as well assume that 
\[
\int_{\R^{n}}xe^{-\phi}dx=0.
\]
 By the functional form of Santaló's inequality \cite[Lemma 2.14]{a-k-m}
this implies that 
\[
\int_{\R^{n}}e^{-\phi^{*}(y)}dy\cdot\int_{\mathbb{R}^{n}}e^{-\phi(x)}dx\leq(2\pi)^{n}
\]
(where equality holds if $\phi=\phi^{*}$ i.e. if $\phi(x)=|x|^{2}/2).$
Moreover, by Jensen's inequality 
\[
-\int_{P}\phi^{*}d\lambda/V\leq\log\left(\int_{P}e^{-\phi^{*}(y)}dy/V\right)=\log\left(\int_{\R^{n}}e^{-\phi^{*}(y)}dy/V\right),
\]
 using in the last equality that $\phi^{*}=\infty$ on the complement
of $P$ (see formula \ref{eq:P in terms of Legendre}). Combining
the latter two inequalities yields the desired inequality \ref{eq:pf of universal}.
\end{proof}
Recall that $\P^{n}$ has maximal volume among all K-semistable $n-$dimensional
Fano varieties (as shown in \cite{ber-ber2} in the toric case and
in \cite{fu} in general). We next show that it will be enough to
prove that, in the toric case, the next to largest volume is attained
by $\mathbb{P}^{n-1}\times\mathbb{P}^{1}:$
\begin{lem}
\label{lem:product is maximal implies thm}For any $n-$dimensional
toric Fano variety $X$ which is K-semistable
\[
\mathrm{vol}(X)\leq\mathrm{vol}(\mathbb{P}^{n-1}\times\mathbb{P}^{1})\implies\widehat{\mathrm{vol}}_{\chi}\left(\mathcal{-}\mathcal{K},\phi\right)<\widehat{\mathrm{vol}}_{\chi}\left(-\overline{\mathcal{K}_{\P_{\Z}^{n}}}\right)
\]
where $-K_{\P^{n}}$ is endowed with the volume-normalized Fubini-Study
metric.
\end{lem}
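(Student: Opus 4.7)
My approach is to reduce the arithmetic inequality to a purely analytic, one-variable comparison by invoking Proposition~\ref{prop:universal toric bound}, whose upper bound on $\widehat{\mathrm{vol}}_{\chi}(-\mathcal{K},\phi)$ depends on $X$ only through the real number $\mathrm{vol}(X)$. I would then (i) use monotonicity to push $\mathrm{vol}(X)$ up to the extremal value $\mathrm{vol}(\P^{n-1}\times\P^{1})$, (ii) invoke the closed form of Lemma~\ref{lem:arithm vol of proj etc} on the right-hand side, and (iii) verify the resulting numerical inequality in the one variable $n$.

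Concretely, Proposition~\ref{prop:universal toric bound} gives $2\widehat{\mathrm{vol}}_{\chi}(-\mathcal{K},\phi)\leq f(\mathrm{vol}(X))$ with
\[
f(v):=v\bigl(n\log(2\pi^{2})-\log v\bigr).
\]
Differentiating, $f'(v)=n\log(2\pi^{2})-\log v-1$, so $f$ is strictly increasing on $(0,(2\pi^{2})^{n}/e)$. A Stirling estimate places $\mathrm{vol}(\P^{n-1}\times\P^{1})=2n^{n}/n!$ well inside this interval for every $n\geq 1$, so the hypothesis $\mathrm{vol}(X)\leq\mathrm{vol}(\P^{n-1}\times\P^{1})$ yields $f(\mathrm{vol}(X))\leq f(2n^{n}/n!)$. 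Combined with the explicit formula of Lemma~\ref{lem:arithm vol of proj etc}, the lemma reduces, for $n\geq 2$, to the elementary inequality
\[
\frac{2n^{n}}{n!}\bigl(n\log(2\pi^{2})-\log(2n^{n}/n!)\bigr)\;<\;\frac{(n+1)^{n}}{n!}\Bigl((n+1)H_{n}-n+\log(\pi^{n}/n!)\Bigr),
\]
where $H_{n}:=\sum_{k=1}^{n}k^{-1}$. The case $n=1$ is degenerate, since then $\P^{n-1}\times\P^{1}=\P^{n}$ and strict inequality must fail for the Fubini--Study metric, so the statement is to be read for $n\geq 2$.

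The main obstacle is verifying the last displayed inequality uniformly in $n$. My plan is to combine an asymptotic analysis with a numerical check over a small range. Stirling's formula gives $n^{n}/n!\sim e^{n}/\sqrt{2\pi n}$ and $H_{n}=\log n+\gamma+O(1/n)$, so the bracket on the left is $n\log(2\pi^{2}/e)+O(\log n)$, the bracket on the right is $n(\log\pi+\gamma)+O(\log n)$, and $(n+1)^{n}/(2n^{n})\to e/2$; hence the limiting ratio of the two sides is $\tfrac{2}{e}\cdot\tfrac{\log(2\pi^{2}/e)}{\log\pi+\gamma}\approx 0.85<1$. Making this bound effective through explicit Stirling estimates gives the inequality for all $n\geq n_{0}$ with some explicit $n_{0}$, and the finitely many remaining values $2\leq n<n_{0}$ can be verified by direct numerical computation. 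The delicate part is keeping enough slack in the asymptotic estimate at moderate $n$, where the true ratio is already close to its limit of $0.85$, while keeping $n_{0}$ small enough that the residual numerical check is feasible.
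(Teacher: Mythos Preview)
Your reduction to the one-variable inequality is identical to the paper's: invoke Proposition~\ref{prop:universal toric bound}, note that $f(v)=-v\log(v/(2\pi^{2})^{n})$ is increasing on $(0,(2\pi^{2})^{n}/e)$, check that $\mathrm{vol}(\P^{n-1}\times\P^{1})$ lies in this range, and then combine with Lemma~\ref{lem:arithm vol of proj etc}. The paper even records the same threshold $(2\pi^{2})^{n}/e$ and bounds it below by $\mathrm{vol}(\P^{n})$ via a short induction.

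Where you diverge is in verifying the resulting elementary inequality in $n$. The paper does not use asymptotics at all: instead it derives explicit recurrences for both sides in the passage $n\mapsto n+1$, writing $2\widehat{\mathrm{vol}}_{\chi}(-\overline{\mathcal{K}_{\P_{\Z}^{n+1}}})$ as $e_{n+1}$ times the $n$-value plus a remainder (here $e_{k}=(1+1/k)^{k}$), and similarly for the left-hand side with $e_{n}$ in place of $e_{n+1}$. Using $e_{n}<e_{n+1}$, the positivity of $\widehat{\mathrm{vol}}_{\chi}(-\overline{\mathcal{K}_{\P_{\Z}^{n}}})$ (itself proved by the same recurrence), and the lower bound $H_{n+1}-\log(n+1)>\gamma$, the induction closes for $n\geq 4$; $n=2,3$ are checked numerically. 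Your route---effective Stirling plus an explicit $n_{0}$ and a finite check---is perfectly sound, and your asymptotic ratio $\tfrac{2}{e}\cdot\tfrac{\log(2\pi^{2}/e)}{\log\pi+\gamma}\approx 0.85$ is correct, but the paper's inductive scheme avoids the bookkeeping of making Stirling effective and keeps the residual numerical verification to just $n=2,3$. Either method proves the lemma.
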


\begin{proof}
First observe that  the function of $\text{vol}(X)$ appearing in
the rhs of the inequality in the previous proposition is increasing
when $\text{vol}(X)\leq(2\pi^{2})^{n}/e.$ This bound is, in fact,
satisfied for any K-semistable $X.$ Indeed, by \cite{ber-ber2},
\begin{equation}
\text{vol}(X)\leq\text{vol}(\mathbb{P}^{n})=\frac{(n+1)^{n}}{n!}<(2\pi^{2})^{n}/e.\label{eq:upper bound on vol in pf}
\end{equation}
(using, in the last inequality a simple induction argument). Thus,
by the previous proposition,  it will be enough to show that

\begin{equation}
-\text{vol}(\mathbb{P}^{n-1}\times\mathbb{P}^{1})\log(\text{vol}(\mathbb{P}^{n-1}\times\mathbb{P}^{1})/(2\pi^{2})^{n})<2\widehat{\text{vol}}_{\chi}\left(-\overline{\mathcal{K}_{\P_{\Z}^{n}}}\right).\label{eq: soughtafter inequality}
\end{equation}
for any $n\geq2.$ To this end first note that
\[
-\text{vol}(\mathbb{P}^{n-1}\times\mathbb{P}^{1})\log(\text{vol}(\mathbb{P}^{n-1}\times\mathbb{P}^{1})/(2\pi^{2})^{n})=-\frac{2n^{n-1}}{(n-1)!}(\log(\frac{2n^{n-1}}{(n-1)!})-n\log(2\pi^{2}))
\]
We check that the inequality holds for $n=2$ and with induction in
mind we simplify the right hand side of \ref{eq: soughtafter inequality}
with $n+1$ for $n$ and get
\begin{align*}
2\widehat{\text{vol}}_{\chi}\left(-\overline{\mathcal{K}_{\P_{\Z}^{n+1}}}\right)= & -\frac{(n+2)^{n+1}}{(n+1)!}(n+1-(n+2)\sum_{k=1}^{n+1}\frac{1}{k}+\log((n+1)!)-(n+1)\log(\pi))\\
= & -(\frac{n+2}{n+1})^{n+1}\frac{(n+1)^{n}}{n!}((n-(n+1)\sum_{k=1}^{n}\frac{1}{k}+\log(n!)-n\log(\pi))+\\
 & (1-(n+2)\sum_{k=1}^{n+1}\frac{1}{k}+(n+1)\sum_{k=1}^{n}\frac{1}{k}+\log(n+1)-\log(\pi)))\\
= & (\frac{n+2}{n+1})^{n+1}2\widehat{\text{vol}}_{\chi}\left(-\overline{\mathcal{K}_{\P_{\Z}^{n}}}\right)-(\frac{n+2}{n+1})^{n+1}(1-\log(\pi)+\log(n+1)-\frac{n+2}{n+1}-\sum_{k=1}^{n}\frac{1}{k})
\end{align*}
Here we observe for later use that $\widehat{\text{vol}}_{\chi}\left(-\overline{\mathcal{K}_{\P_{\Z}^{n}}}\right)>0\forall n\geq1$
by evaluating it at $n=1$ and then using the above to perform induction
and noting that 
\[
-(1-\log(\pi)+\log(n+1)-\frac{n+2}{n+1}-\sum_{k=1}^{n}\frac{1}{k})>-(-\log(\pi)+\log(2))=\log(\frac{\pi}{2})>0
\]
for $n\geq1$. We have used that $-\log(n+1)+\sum_{k=1}^{n}\frac{1}{k}$
is increasing and can thus be estimated from below by putting $n=1$.
We also simplify the left hand side of \ref{eq: soughtafter inequality},

\begin{align*}
-\text{vol}(\mathbb{P}^{n}\times\mathbb{P}^{1})\log(\text{vol}(\mathbb{P}^{n}\times\mathbb{P}^{1})/(2\pi^{2})^{n+1})= & -\frac{2(n+1)^{n}}{n!}(\log(\frac{2(n+1)^{n}}{n!})-(n+1)\log(2\pi^{2}))\\
= & -(\frac{n+1}{n})^{n}\frac{2n^{n}}{n!}((\log(\frac{2n^{n}}{n!})-n\log(2\pi^{2}))+\\
 & (\log((\frac{n+1}{n})^{n}-\log(2\pi^{2}))\\
= & -(\frac{n+1}{n})^{n}\text{vol}(\mathbb{P}^{n-1}\times\mathbb{P}^{1})\log(\text{vol}(\mathbb{P}^{n-1}\times\mathbb{P}^{1})/(2\pi^{2})^{n})\\
 & -2\frac{(n+1)^{n}}{n!}(-\log((\frac{n+1}{n})^{n})-\log(2\pi^{2})).\\
\end{align*}
Fix $n\geq2$ and assume $-\text{vol}(\mathbb{P}^{n-1}\times\mathbb{P}^{1})\log(\text{vol}(\mathbb{P}^{n-1}\times\mathbb{P}^{1})/(2\pi^{2})^{n})\leq2\widehat{\text{vol}}_{\chi}\left(-\overline{\mathcal{K}_{\P_{\Z}^{n}}}\right)$.
Define for brevity $e_{n}=(1+\frac{1}{n})^{n}$ and estimate

\begin{align*}
2\widehat{\text{vol}}_{\chi}\left(-\overline{\mathcal{K}_{\P_{\Z}^{n+1}}}\right) & -(-\text{vol}(\mathbb{P}^{n}\times\mathbb{P}^{1})\log(\text{vol}(\mathbb{P}^{n}\times\mathbb{P}^{1})/(2\pi^{2})^{n+1}))\\
= & e_{n+1}\widehat{\text{vol}}_{\chi}\left(-\overline{\mathcal{K}_{\P_{\Z}^{n}}}\right)-(-e_{n}\text{vol}(\mathbb{P}^{n}\times\mathbb{P}^{1})\log(\text{vol}(\mathbb{P}^{n}\times\mathbb{P}^{1})/(2\pi^{2})^{n}))\\
 & +2\frac{(n+1)^{n}}{n!}(\log(\frac{(n+1)^{n}}{n})-\log(2\pi^{2}))\\
 & -\frac{(n+2)^{n+1}}{(n+1)!}(1-\log(\pi)+\log(n+1)-\frac{n+2}{n+1}-\sum_{k=1}^{n}\frac{1}{k})\\
> & 2\frac{(n+1)^{n}}{n!}(\log(\frac{(n+1)^{n}}{n})-\log(2\pi^{2}))\\
 & -\frac{(n+2)^{n+1}}{(n+1)!}(1-\log(\pi)+\log(n+1)-\frac{n+2}{n+1}-\sum_{k=1}^{n}\frac{1}{k})\\
= & \frac{(n+2)^{n+1}}{(n+1)!}(\frac{(n+1)^{n}}{n!}/\frac{(n+2)^{n+1}}{(n+1)!}2(\log((\frac{n+1}{n})^{n})-\log(2\pi^{2}))\\
 & -1+\log(\pi)-\log(n+1)+\frac{n+2}{n+1}+\sum_{k=1}^{n}\frac{1}{k})\\
= & \frac{(n+2)^{n+1}}{(n+1)!}[\frac{2}{e_{n}}(\log(e_{n})-\log(2\pi^{2}))+\log(\pi)+\sum_{k=1}^{n+1}\frac{1}{k}-\log(n+1)].\\
\end{align*}
In the inequality above we have used $\widehat{\text{vol}}_{\chi}\left(-\overline{\mathcal{K}_{\P_{\Z}^{n}}}\right)>0\forall n\geq1$
and $e_{n}<e_{n+1}$ and the induction hypothesis. Next check numerically
that this last expression is positive for $n=2,3$. For $n\geq4$
we have 
\begin{align*}
\frac{2}{e_{n}}(\log(e_{n})-\log(2\pi^{2}))+\log(\pi) & +\sum_{k=1}^{n+1}\frac{1}{k}-\log(n+1)\\
> & \frac{2}{e_{4}}(\log(e_{4})-\log(2\pi^{2}))+\log(\pi)+\gamma>0.
\end{align*}
We used again that $e_{n}<e_{n+1}$ and the fact that $\sum_{k=1}^{n+1}\frac{1}{k}-\log(n+1)>\gamma$
\cite{ti-ty}, where $\gamma$ is the Euler-Mascheroni constant. The
last inequality is checked numerically.
\end{proof}
We expect that any K-semistable toric Fano variety $X,$ not equal
to $\P^{n},$ satisfies the volume bound in the previous lemma (see
the following section). Here we will show that this is the case under
the conditions of Theorem \ref{thm:main toric intro}. First, the
singular cases are handled using the following bound.
\begin{lem}
\label{lem:sing vol}Let $X$ be a singular K-semistable toric Fano
variety. Then 
\[
\mathrm{vol}(-K_{X})\leq\frac{1}{2}(n+1)^{n}/n!
\]

if anyone of the following conditions hold:
\begin{itemize}
\item $X$ is $\Q-$factorial (or equivalently, $X$ has abelian quotient
singularities).
\item $X$ is not Gorenstein
\end{itemize}
In particular, by the first point, when $n=2$ this inequality holds
for any singular K-semistable toric Fano variety $X.$ 
\end{lem}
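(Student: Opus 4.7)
My plan is to work directly on the moment polytope $P := P(-K_{X}) \subset \R^{n}$, which by K-semistability has its barycenter at the origin (Prop \ref{prop:toric ke}) and satisfies $\mathrm{vol}(-K_{X}) = \mathrm{Vol}(P)$. Fujita's bound \eqref{eq:fujita intro} gives $\mathrm{Vol}(P) \leq (n+1)^{n}/n!$ with equality only for the lattice simplex realizing $\P^{n}$, so the task is to extract a factor-of-two gap from each singularity hypothesis.

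For the non-Gorenstein case, $P$ is a rational but not integral polytope: the Gorenstein index $m := \min\{k \in \Z_{+} : kP \text{ is integral}\}$ is $\geq 2$, and at least one vertex of $P$ sits in $\tfrac{1}{m}\Z^{n} \setminus \Z^{n}$. I would try to leverage polar duality --- the dual $P^{*} = \mathrm{conv}\{l_{F}\}$ being a lattice polytope with primitive vertices --- to derive a quantitative lattice obstruction preventing $P$ from approaching the extremal simplex. Concretely, a combination of the functional Santal\'o-type inequality used in Prop \ref{prop:universal toric bound} with the integrality of $P^{*}$ should force $\mathrm{Vol}(P) \leq \tfrac{1}{2}(n+1)^{n}/n!$ whenever $m \geq 2$.

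For the $\Q$-factorial singular case, the fan is simplicial, so $P$ is a simple polytope with exactly $n$ primitive facet normals $l_{1}^{v},\ldots,l_{n}^{v}$ meeting at each vertex $v$, and the local index $i(v) := |\det(l_{1}^{v},\ldots,l_{n}^{v})|$ is $\geq 2$ at some $v$ where the fan is not smooth. My plan is to use a Brion/Lawrence-type vertex decomposition of $\mathrm{Vol}(P)$, in which each vertex $v$ contributes a rational weight whose denominator is divisible by $i(v)$, then combine this with the barycenter constraint to bound $\mathrm{Vol}(P)$ from above by $\tfrac{1}{2}(n+1)^{n}/n!$.

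The main obstacle is arranging the factor of $2$ effectively. Fujita's inequality is sharp only for the $\P^{n}$-simplex, but the gap to the next possible value is not a priori $\geq \tfrac{1}{2}$, so the singularity hypothesis must be used in an essential rather than merely qualitative way. The cleanest route is likely to refine the proof of Fujita's inequality given in \cite{ber-ber2} by carrying an integer denominator through the Legendre transform/Santal\'o step, so that the resulting bound comes with an explicit factor $1/m$ or $1/i(v)$. Failing that, I would fall back on structural results about non-reflexive rational Fano polytopes (e.g.\ in the spirit of Nill's classification-style bounds) to conclude uniformly in $n$, as required by the statement.
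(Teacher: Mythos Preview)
Your primary plans---Brion/Lawrence decomposition for the $\Q$-factorial case and a Santal\'o-type argument with the Gorenstein index for the non-Gorenstein case---do not give a clear mechanism for the factor $\tfrac{1}{2}$. In Brion's formula the vertex contributions are signed and individually unbounded, so knowing that one local index $i(v)\geq 2$ does not by itself bound the total volume. Likewise, the functional Santal\'o inequality bounds $\mathrm{Vol}(P)\mathrm{Vol}(P^{*})$, and it is unclear how a non-integral vertex of $P$ (i.e.\ $m\geq 2$) would force a halving of $\mathrm{Vol}(P)$ through that route.

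Your fallback---``refine the proof in \cite{ber-ber2} by carrying an integer denominator''---is actually the whole argument, and it uses neither the Legendre transform nor Santal\'o. The paper fixes a vertex $p_{0}$ of $P$, picks $n$ facets through $p_{0}$ with primitive normals $l_{1},\ldots,l_{n}$, and applies the affine map $p\mapsto(\langle l_{1},p\rangle+1,\ldots,\langle l_{n},p\rangle+1)$. The image $P'\subset[0,\infty[^{n}$ has barycenter $(1,\ldots,1)$ (since $0$ is the barycenter of $P$), so \cite[Thm 1.5]{ber-ber2} gives $\mathrm{Vol}(P')\leq(n+1)^{n}/n!$. Since $\mathrm{Vol}(P')=\delta\cdot\mathrm{Vol}(P)$ with $\delta=|\det(l_{1},\ldots,l_{n})|\in\Z_{+}$, the factor $\tfrac{1}{2}$ follows once $\delta\geq 2$ at \emph{some} vertex. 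In the $\Q$-factorial case this is immediate: $\delta=1$ means the affine chart at $p_{0}$ is smooth, so singularity forces $\delta\geq 2$ somewhere. For the non-Gorenstein case the paper does \emph{not} invoke the Gorenstein index $m$ directly; instead it observes that if $\delta=1$ at a vertex then the affine map above has integer inverse (all $a_{F}=1$), so $p_{0}\in\Z^{n}$. If this held at every vertex $P$ would be a lattice polytope and $X$ Gorenstein; hence non-Gorenstein again gives some $\delta\geq 2$. This last step is the missing idea in your plan.
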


\begin{proof}
The result concerning the first point is the toric case of \cite[Thm 3]{liu}
concerning quotient singularities, but in the toric case it also follows
from the proof of \cite[Thm 1.2]{ber-ber2}. For future reference
we recall the argument in \cite{ber-ber2}. Let $P$ be a given polytope
with rational vertices and represent $P$ as the intersection of hyperplanes
$\left\{ p\in\R^{n}:\,\left\langle l_{F},p\right\rangle \geq-a_{F}\right\} ,$
where the index $F$ ranges over the facets of $P,$ $l_{F}$ is a
primitive vector in $\Z^{n}$ and $a_{F}$ is a non-zero positive
numbers. In the present Fano case $a_{F}=1.$ Moreover, since $X$
is assumed to be $\Q-$factorial for any vertex $p_{0}$ of $P$ there
are precisely $n$ facets $F_{1},...,F_{n}$ of $P$ intersecting
$p_{0},$ numbered so that the corresponding normals define a positively
oriented bases in $\R^{n}$ \cite{c-l-s}. Fixing a vertex $p_{0}$
of $P$ we denote by $P'$ the image of $P$ under the map 
\begin{equation}
p\mapsto\left(\frac{\left\langle l_{F_{1}},p\right\rangle +a_{F_{1}}}{a_{F_{1}}},...,\frac{\left\langle l_{F_{n}},p\right\rangle +a_{F_{n}}}{a_{F_{n}}}\right),\label{eq:map in pf sing vol}
\end{equation}
 which is a polytope in $[0,\infty[^{n}.$ Moreover, assuming that
$0$ is the barycenter of $P$ the barycenter of $P'$ is $(1,...,1).$
By \cite[Thm 1.5]{ber-ber2} the volume $\mathrm{Vol}(P')$ of any
such polytope is maximal when $P'$ is $(n+1)$ times the unit-simplex
in $[0,\infty[^{n}$ with vertex at $(0,...,0).$ Hence, 
\begin{equation}
\mathrm{Vol}(P')\leq(n+1)^{n}/n!,\ \ \ \mathrm{Vol}(P')=\frac{\delta}{a_{F_{1}}\cdots a_{F_{n}}}\mathrm{Vol}(P)\label{eq:ineq in pf lemma vol sing}
\end{equation}

where $\delta$ is the determinant of the map $p\mapsto\left(\left\langle l_{F_{1}},p\right\rangle ,...,\left\langle l_{F_{n}},p\right\rangle \right).$
Thus $\delta$ is a positive integer and $\delta=1$ iff the map is
invertible, i.e. if and only if $l_{F_{1}},...,l_{F_{n}}$ generate
$\Z^{n},$ which is equivalent to the $T_{c}-$invariant \foreignlanguage{american}{neighbourhood}
$U_{0}$ corresponding to the vertex $p_{0}$ being biholomorphic
to $\C^{n}$ \cite{c-l-s}. Hence, if $X$ is singular (i.e. $X$
is not non-singular), then there must be some vertex $p_{0}$ with
$\delta\geq2.$ Since $a_{F_{i}}=1$ this concludes the proof.

To prove the second point we employ a similar argument. This time,
for $X$ possibly not $\Q$-factorial, there might be more than $n$
facets intersecting a vertex $p_{0}.$ Still, there are at least $n$
facets intersecting at $p_{0}$, and we can construct the map \ref{eq:map in pf sing vol}
by choosing any $n$ of them. Next note that if $\delta=1$, the map
and its inverse have integer coefficients (since $a_{F_{i}}=1$ when
$X$ is Fano) and since $p_{0}$ is mapped to $0$, $p_{0}\in\mathbb{Z}^{n}$.
Since $p_{0}$ was arbitrary, it follows that $P$ is a lattice polytope
and hence $X$ is Gorenstein. Thus $\delta\geq2$ and we are done. 
\end{proof}
The volume bound in the previous lemma implies the volume bound in
Lemma \ref{lem:product is maximal implies thm} is satisfied:
\begin{equation}
\frac{(n+1)^{n}}{2n!}\leq\frac{2n^{n-1}}{(n-1)!}\iff\left(1+1/n\right)^{n}\leq4.\label{eq:halv Pn vol smaller than}
\end{equation}
 The lhs in the latter inequality increases to $e,$ which is, indeed,
smaller than $4.$ This proves Theorem \ref{thm:main toric intro}
in the singular cases. Finally, in the case that $X$ is non-singular
there are, for any given dimension $n$ only a finite number of cases
to check in order to verify  the volume bound in Lemma \ref{lem:product is maximal implies thm}.
When $n\leq6$ we may apply the database \cite{ob} of all non-singular
Fano varieties of dimension $n.$ The condition that the barycenter
of $P$ vanishes, corresponds in the data base to the condition ``zero
dual barycentre''. Adding the condition $(-K_{X})^{n}\geq n!\text{vol}(\mathbb{P}^{n-1}\times\mathbb{P}^{1})$
the database only furnishes $\P^{n}$ and $\mathbb{P}^{n-1}\times\mathbb{P}^{1},$
as desired.

\subsubsection{\label{subsec:gap}Remarks on the ``gap hypothesis''}

In order to extend the proof of Theorem \ref{thm:main toric intro}
to a any dimension $n$ one would need to establish the following
conjecture (established above under the conditions in Theorem \ref{thm:main toric intro}):
\begin{conjecture}
\label{conj:(the-gap-hypothesis).}(the ``gap hypothesis''). For
any $n-$dimensional toric K-semistable Fano manifold $X$ different
from $\P^{n},$ $\mathrm{vol}(X)\leq\mathrm{vol}(\mathbb{P}^{n-1}\times\mathbb{P}^{1}).$
\end{conjecture}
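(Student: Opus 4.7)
The plan is to extend the case analysis already carried out for $n \leq 6$ to arbitrary dimension, by combining the polytope-theoretic methods of \cite{ber-ber2} with a more careful use of the K-semistability (barycenter) constraint. First, by Lemma \ref{lem:sing vol} the conjecture is already established whenever $X$ is singular, since the bound $\mathrm{vol}(-K_{X}) \leq (n+1)^{n}/(2\cdot n!)$ is itself $\leq \mathrm{vol}(\P^{n-1} \times \P^{1}) = 2n^{n}/n!$, which reduces (as in \ref{eq:halv Pn vol smaller than}) to $(1+1/n)^{n} \leq 4$. The remaining task is therefore to establish the bound in the smooth case.

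For a smooth toric Fano $X$ the map \ref{eq:map in pf sing vol} attached to any vertex $p_{0}$ of $P := P(-K_{X})$ lies in $GL(n, \Z)$, so $\mathrm{Vol}(P) = \mathrm{Vol}(P')$, and the vertex-cone argument gives only the Fujita bound $\mathrm{Vol}(P) \leq (n+1)^{n}/n!$. To produce a genuine gap I would seek a quantitative stability version of the key inequality $\mathrm{Vol}(P') \leq (n+1)^{n}/n!$ of \cite[Thm 1.5]{ber-ber2}: if $\mathrm{Vol}(P')$ lies within $\varepsilon$ of the maximum, then $P'$ should be within $O(\varepsilon)$ of the dilated simplex $(n+1)\Sigma_{n}$ in a suitable combinatorial/Hausdorff sense. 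Combined with the requirement that $0$ is the barycenter of $P$ and that every vertex cone is unimodular (smoothness), this should force $P$ itself to be $GL(n,\Z)$-equivalent to $(n+1)(\Sigma_{n} - (1,\dots,1))$, i.e., $X = \P^{n}$.

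A complementary route is a classification argument in the spirit of the database \cite{ob}. Since smooth K-semistable toric Fanos of Picard rank $1$ are necessarily $\P^{n}$, one is reduced to $\rho(X) \geq 2$. There one could attempt an induction on $n$: write such an $X$ as a toric fibration (or, after a Mori-type contraction, relate its polytope to that of a lower-dimensional Fano base), use K-semistability of the fibers/base (which in the toric case is preserved by natural operations on the polytope), and apply the induction hypothesis together with explicit volume computations for the product $\P^{n-1} \times \P^{1}$. The hope is that $\P^{n-1} \times \P^{1}$ emerges as the unique volume-maximizer among $\rho \geq 2$ smooth K-semistable toric Fanos, precisely because increasing $\rho$ forces extra facets of $P$ which cut down the volume.

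The main obstacle is exactly the collapse of the \cite{ber-ber2} argument in the smooth case, where $\delta = 1$ at every vertex and all slack in Lemma \ref{lem:sing vol} disappears. One therefore needs either a genuinely quantitative stability theorem for the inequality $\mathrm{Vol}(P') \leq (n+1)^{n}/n!$ (a statement of independent interest in convex geometry, somewhat analogous to stability versions of the reverse Santal\'o inequality), or a uniform combinatorial classification of smooth Fano polytopes with barycenter at the origin and volume above the threshold $2n^{n}/n!$. Neither tool is available in the literature in an $n$-uniform form, and it is presumably for this reason that the authors leave the conjecture open beyond $n \leq 6$; any approach via either route will have to contend with the fact that the gap ratio $2n^{n}/(n+1)^{n} \to 2/e$ is bounded away from $1$ only by a fixed amount, so the stability estimate must be strong enough to rule out all near-extremal smooth polytopes other than the simplex.
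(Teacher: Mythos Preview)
This statement is a \emph{conjecture} in the paper, not a theorem: the authors do not prove it in general, and the surrounding discussion in Section~\ref{subsec:gap} explicitly leaves it open beyond the special cases already absorbed into Theorem~\ref{thm:main toric intro}. There is therefore no ``paper's own proof'' to compare against.

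Your write-up is not a proof either, and you seem aware of this. What you have produced is an accurate summary of the state of affairs together with a research plan. You correctly observe that the singular cases are already covered by Lemma~\ref{lem:sing vol} and the comparison \ref{eq:halv Pn vol smaller than}, and that the smooth case is where the content lies. You also correctly diagnose why the argument of \cite{ber-ber2} gives no gap in the smooth case (every vertex has $\delta=1$). But the two routes you propose --- a quantitative stability version of \cite[Thm~1.5]{ber-ber2}, or an $n$-uniform combinatorial classification of smooth Fano polytopes with barycenter at the origin and large volume --- are themselves open problems, as you acknowledge in your final paragraph. Neither is carried out, and neither is known; so what you have is a plausible outline of where a proof \emph{might} come from, not a proof. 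The paper's authors are in the same position: for smooth $X$ and $n\leq 6$ they simply appeal to the finite database \cite{ob}, which is exactly the non-uniform case-check that your proposal is trying to replace.

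In short: there is no gap to point to because there is no argument --- the substantive steps are stated as desiderata. If you intend this as a proof, the missing idea is precisely the one you name yourself.
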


This conjecture might even hold without the toric assumptions in any
dimension (as pointed out to us by Ziquan Zhuang this appears to be
a folklore conjecture). For example, when $n=3$ and $X$ is non-singular
it follows from the well-known classification of three dimensional
Fano manifolds (see the ``big table'' in \cite[Section 6]{ar})
that the only Fano manifolds $X,$ different from $\P^{3},$ which
do not satisfy the inequality in question are $\P^{3}$ blown-up in
one point and $\P(\mathcal{O}\oplus\mathcal{O}(2)).$ But both of
these are K-unstable, i.e. they are not K-semistable. Indeed, these
two Fano manifolds are toric and if they were K-semistable they would
satisfy the gap hypothesis, by the toric case $(n\leq6)$ applied
to $n=3.$ Let us also point out that in the toric case it is only
$\P^{n-1}\times\P^{1}$ that saturates the inequality in the ``gap
hypothesis'' when $n\leq6$ and it seems thus natural to ask if this
is also the case when $n>6?$ However, in the general non-toric case
the inequality is also saturated by the non-singular quadratic hypersurface
$X_{2}$ in $\P^{n+1},$ i.e. the base of the Ordinary Double Point
 (ODP). Moreover, as pointed out to us by Yuji Odaka, in the general
case our ``gap hypothesis'' is reminiscent of the ODP-conjecture
in \cite{s-s}, very recently settled in the toric case \cite{m-s}.
More precisely, in our setup, the ODP-conjecture implies that 
\begin{equation}
\mathrm{vol}(X)\leq\mathrm{vol}(\mathbb{P}^{n-1}\times\mathbb{P}^{1})(n/I(X)),\label{eq:ODP}
\end{equation}
 where $I(X)$ denotes the Fano index of $X$ (i.e. largest positive
integer such that $K_{X}/I(X)$ is a line bundle). However, $I(X)\leq n$
when $X\neq\P^{n}$ (with equality iff $X=X_{2})$ and hence the inequality
\ref{eq:ODP} is weaker than our ``gap hypothesis''.

\subsubsection{\label{subsec:The-case-of-toric products}The case of products in
any dimension}
\begin{lem}
The ``gap hypothesis'' holds when $X$ is the product of K-semistable
Fano varieties $X_{1},....,X_{M}$ (not necesseraily assumed toric),
for $M\geq2$. 
\end{lem}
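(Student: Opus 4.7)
The plan is to reduce the claim to a purely combinatorial inequality on partitions of $n$. For a product $X = X_1 \times \cdots \times X_M$ the relation $-K_X = \sum_i \pi_i^*(-K_{X_i})$ together with the multinomial expansion of the top self-intersection yields the factorization $\mathrm{vol}(X) = \prod_i \mathrm{vol}(X_i)$, where $n = \sum_i n_i$. Applying Fujita's inequality \eqref{eq:fujita intro} to each K-semistable factor, it suffices to show
\[
\prod_{i=1}^{M} f(n_i) \;\leq\; 2\,f(n-1), \qquad f(k) := \frac{(k+1)^k}{k!},
\]
for every composition $n = n_1 + \cdots + n_M$ with $M \geq 2$ and each $n_i \geq 1$; indeed the right-hand side equals $\mathrm{vol}(\P^{n-1} \times \P^1)$.

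After taking logarithms, set $g(k) := \log f(k) = k \log(k+1) - \log(k!)$. I would establish two properties of $g$: (i) $g$ is convex on the non-negative integers, and (ii) $g(0) = 0$, so that by convexity $g$ is superadditive on the positive integers. Property (ii) follows from (i) by the standard observation that the chord from the origin to $(a+b, g(a+b))$ lies above the convex graph on $[0, a+b]$, forcing $g(a) + g(b) \leq g(a+b)$ whenever $a, b \geq 1$.

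Given (i) and (ii), the proof closes in two short steps. Superadditivity collapses any $M$-part composition with $M \geq 2$ to a two-part one: $\sum_i g(n_i) \leq g(n_1) + g(n-n_1)$ for some $1 \leq n_1 \leq n-1$. Convexity of $g$ on $\{1,\dots,n-1\}$ then maximizes the two-part sum at one of the endpoints $n_1 = 1$ or $n_1 = n-1$, giving $g(1) + g(n-1) = \log 2 + g(n-1)$. Exponentiating recovers the required inequality, with equality only at the partition realized by $\P^{n-1} \times \P^1$.

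The only real computation is the convexity of $g$, which is the main technical step. The clean way is to compute $g(k) - g(k-1) = k \log(1 + 1/k)$ and then show that $h(x) := x \log(1 + 1/x)$ is strictly increasing on $(0,\infty)$; this reduces, via $h'(x) = \log(1 + 1/x) - 1/(x+1)$, to the elementary bound $\log(1+t) > t/(1+t)$ for $t > 0$, applied with $t = 1/x$. Everything else in the argument is bookkeeping.
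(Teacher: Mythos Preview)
Your argument is correct and is essentially the same as the paper's, just rephrased. Both proofs reduce, via Fujita applied to each factor, to the combinatorial inequality $\prod_i f(n_i)\le 2f(n-1)$ for $f(k)=(k+1)^k/k!$, and both establish it using the single elementary fact that $e_k=(1+1/k)^k$ is increasing---equivalently, that $g(k)-g(k-1)=k\log(1+1/k)$ is increasing, i.e.\ your convexity of $g$. The paper phrases this as the ``shift'' inequality $f(n)f(m)=f(n+1)f(m-1)\,e_m/e_{n+1}<f(n+1)f(m-1)$ (for $m\le n$) and iterates it down to $(n{-}1,1)$, after a one-line reduction to $M=2$; you package the same step as ``convex plus $g(0)=0$ gives superadditivity, then convexity pushes the two-part sum to an endpoint''. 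Your presentation is arguably cleaner conceptually, but there is no new idea: the iteration in the paper \emph{is} the endpoint-maximization of a convex function, and your reduction from $M$ terms to two via superadditivity plays the same role as the paper's induction on $M$.
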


\begin{proof}
By a simple induction argument we may as well assume that $M=2.$
Let, without loss of generality, $n:=\text{dim}(X_{1})\geq\dim(X_{2})=:m>1$.
Note that if $m=1$ we are done since then, $\text{vol}(X)=\text{vol}(X_{1})\text{vol}(X_{2})\leq\text{vol}(\mathbb{\mathbb{P}}^{N-1})\text{vol}(\mathbb{P}^{1})=\text{vol}(\mathbb{P}^{N-1}\times\mathbb{P}^{1})$
using that, by Fujita's inequality \ref{eq:fujita intro}, complex
projective space maximizes the volume among K-semistable Fano varieties
in each dimension. Using again that complex projective space maximizes
the volume in each given dimension and defining for brevity $e_{k}:=(1+\frac{1}{k})^{k}$
we get $\text{vol}(X)=\text{vol}(X_{1})\text{vol}(X_{2})\leq\text{vol}(\mathbb{\mathbb{P}}^{n})\text{vol}(\mathbb{P}^{m})$
\[
=\frac{(n+1)^{n}}{n!}\frac{(m+1)^{m}}{m!}=\frac{(n+2)^{n+1}}{(n+1)!}\frac{m^{m-1}}{(m-1)!}(\frac{n+1}{n+2})^{n+1}(\frac{m+1}{m})^{m}
\]
\[
=\frac{(n+2)^{n+1}}{(n+1)!}\frac{m^{m-1}}{(m-1)!}\frac{e_{m}}{e_{n+1}}<\frac{(n+2)^{n+1}}{(n+1)!}\frac{m^{m-1}}{(m-1)!}=\text{vol}(\mathbb{\mathbb{P}}^{n+1})\text{vol}(\mathbb{P}^{m-1})
\]
where in the last inequality we have used that $e_{k}$ is increasing
in $k$. We may continue in similar manner until we have $\text{vol}(\mathbb{P}^{N-1}\times\mathbb{P}^{1})$
in the right hand side and we are done. 
\end{proof}
As explained in the previous section, it follows from the previous
lemma that Conjecture \ref{conj:height intro} holds when $\mathcal{X}$
is a product of toric arithmetic Fano varieties, i.e. $\mathcal{X}=\mathcal{X}_{1}\times\cdots\times\mathcal{X}_{M},$
where $\mathcal{X}_{i}$ is endowed with its canonical integral structure. 

\subsection{The height of toric Kähler-Einstein metrics; proof of Theorem \ref{thm:ke intro}}

By Prop \ref{prop:universal toric bound} it only remains to prove
the lower bound. Using the notation in the proof of Prop \ref{prop:universal toric bound}
we have that, for any continuous convex function $\psi$ on $\R^{n}$
such that $\psi-\psi_{P}$ is bounded, 
\[
2(\overline{-\mathcal{K}_{\mathcal{X}}})^{n+1}/\mathrm{vol}(-K_{X})\geq-\int_{P}\psi^{*}dy/\mathrm{Vol}(P)+\log\int_{\R^{n}}e^{-\psi}dx+n\log\pi
\]
 In particular, taking $\psi=\psi_{P}$ the first term in the right
hand side vanishes. Moreover, 
\[
I:=\int_{\R^{n}}e^{-\psi_{P}}dx=n!\mathrm{Vol}\text{\ensuremath{(P^{*})}},
\]
 where $P^{*}$ denotes the polar dual of $P,$ i.e. $P^{*}$ consists
of all $x\in\R^{n}$ such that $x\cdot p\leq1$ for all $p\in P.$
Indeed, 
\[
I=\int_{[0,\infty[}e^{-t}(\psi_{P})_{*}dx=\int e^{-t}\frac{dV(t)}{dt}dt=\int e^{-t}V(t)dt=\int_{0}^{\infty}e^{-t}t^{n}dt\mathrm{Vol}\ensuremath{(P^{*})},
\]
 where $V(t)$ is the Lebesgue volume of $\{\psi_{P}<t\}$ i.e. of
$tP^{*}.$ Hence, 
\[
2(\overline{-\mathcal{K}_{\mathcal{X}}})^{n+1}\geq\mathrm{Vol}(P)\left(\log\left(n!\mathrm{Vol}(P^{*})\right)+n\log\pi\right).
\]
 Since, by definition, $\mathrm{Vol}(P^{*})\mathrm{Vol}(P)\geq m_{n}$
this concludes the proof of the lower bound in the theorem. Next,
by \cite[Cor 1.8]{ku} (see also \cite{bern})
\[
m_{n}\geq(\frac{\pi}{2e})^{n-1}(n+1)^{n+1}/(n!)^{2}=(\frac{\pi}{2e})^{n-1}\frac{(n+1)}{n!}\sigma_{n},
\]
 where $\sigma_{n}=\text{\ensuremath{\mathrm{vol}}\ensuremath{(\P^{n}).}}$
Since $\mathrm{Vol}(P)\leq\sigma_{n}$ (by \ref{eq:upper bound on vol in pf})
this means that

\[
n!\pi^{n}m_{n}\mathrm{Vol}(P)^{-1}\geq n!\pi^{n}m_{n}\sigma_{n}^{-1}=\pi(\frac{\pi^{2}}{2e})^{n-1}(n+1)>1
\]
 proving the positivity in the theorem.

\subsection{Examples}

We next provide examples of families of toric varities $X$ for which
the height of the corresponding Kähler-Einstein can be explicitely
computed as a function of $\mathrm{vol}(X)$ of the same form as in
Theorem \ref{thm:ke intro}. The examples are based on the following 
\begin{prop}
\label{prop: linearly equivalent polytopes}Let $X_{1}$ and $X_{2}$
be two K-semistable toric Fano varieties of dimension $n$ with moment
polytopes $P_{1}$ and $P_{2}$ such that $P_{2}=AP_{1}$ for an invertible
linear transformation $A$ (the polytopes are linearly equivalent).
Denote the canonical integral models of $X_{1}$ and $X_{2}$ by $\mathcal{X}_{1}$
and $\mathcal{X}_{2}$ respectively. Then, with heights taken with
respect to the volume-normalized Kähler-Einstein metrics, 

\[
\frac{(\overline{-\mathcal{K}_{\mathcal{X}_{2}}})^{n+1}/(n+1)!}{\left(-K_{X_{2}}\right)^{n}/n!}=\frac{(\overline{-\mathcal{K}_{\mathcal{X}_{1}}})^{n+1}/(n+1)!}{\left(-K_{X_{1}}\right)^{n}/n!}-\frac{1}{2}\log\mathrm{det}A.
\]
\end{prop}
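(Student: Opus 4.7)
The plan is to exploit Proposition 3.2, which expresses the arithmetic volume of a $T$-invariant metric on the canonical model purely in terms of its Legendre transform on the moment polytope: $2\widehat{\mathrm{vol}}_\chi(-\mathcal{K}_i,\phi_i) = -\int_{P_i}\phi_i^{*}\,dy$. This formula applies to \emph{any} $T$-invariant continuous metric (no volume normalization required). Combined with $(-K_{X_i})^n/n! = \mathrm{Vol}(P_i)$ and $(\overline{-\mathcal{K}_{\mathcal{X}_i}})^{n+1}/(n+1)! = \widehat{\mathrm{vol}}_\chi(-\mathcal{K}_i,\phi_i^{KE})$, the identity in the proposition reduces to comparing $\int_{P_2}\phi_2^{*}dp$ with $\int_{P_1}\phi_1^{*}dq$ when the $\phi_i$ are volume-normalized $T$-invariant K\"ahler--Einstein metrics.

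The main constructive step is to produce the K\"ahler--Einstein potential on $X_2$ from the one on $X_1$ by an explicit affine change of variable, so that both integrals are related by a direct substitution. Assume $\det A > 0$ (the general case only introduces $|\det A|$). Given a volume-normalized $T$-invariant KE potential $\phi_1$ on $\mathbb{R}^n$, I would define
\[
\phi_2(x) \; := \; \phi_1(A^T x) - \log\det A
\]
and verify three things: (i) its Legendre transform $\phi_2^{*}(p) = \phi_1^{*}(A^{-1}p) + \log\det A$ has finite locus $AP_1 = P_2$, so $\phi_2$ is a continuous convex potential on $P_2$; (ii) the change of variable $y = A^T x$ together with the shift $-\log\det A$ gives $\pi^n\int_{\mathbb{R}^n}e^{-\phi_2}dx = \pi^n\int_{\mathbb{R}^n}e^{-\phi_1}dy = 1$, so $\phi_2$ is volume-normalized; (iii) from $\det\nabla^2\phi_2(x) = (\det A)^2\det\nabla^2\phi_1(A^T x)$ and the KE real Monge--Amp\`ere equation $\det\nabla^2\phi_1 = \pi^n\mathrm{Vol}(P_1)e^{-\phi_1}$ for $\phi_1$, together with $\mathrm{Vol}(P_2) = \det A\cdot\mathrm{Vol}(P_1)$, one obtains $\det\nabla^2\phi_2 = \pi^n\mathrm{Vol}(P_2)e^{-\phi_2}$, which is the volume-normalized toric KE equation on $X_2$. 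By uniqueness of the toric KE potential modulo the torus action (Prop.~3.3), $\phi_2$ is \emph{the} volume-normalized KE metric on $X_2$, and the $T$-action does not affect $\widehat{\mathrm{vol}}_\chi$.

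With this identification the proof is just a change of variable. Substituting $q = A^{-1}p$ in $\int_{P_2}\phi_2^{*}(p)\,dp = \int_{AP_1}\bigl(\phi_1^{*}(A^{-1}p) + \log\det A\bigr)\,dp$ yields
\[
\int_{P_2}\phi_2^{*}dp \;=\; \det A\left(\int_{P_1}\phi_1^{*}dq + \mathrm{Vol}(P_1)\log\det A\right).
\]
Dividing by $\mathrm{Vol}(P_2) = \det A\cdot\mathrm{Vol}(P_1)$ and invoking Proposition 3.2 on both sides gives the claimed identity. The only real subtleties are bookkeeping: placing the adjoint $A^T$ (rather than $A$) in the potential so that the Legendre transform lands on $AP_1$, and fixing the additive constant by the volume-normalization requirement --- neither presents a substantive obstacle.
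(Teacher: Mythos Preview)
Your proof is correct. The computation and the underlying change of variables are the same as in the paper, but the logical packaging differs slightly. The paper invokes the variational characterization
\[
\frac{(\overline{-\mathcal{K}_{\mathcal{X}_{i}}})^{n+1}/(n+1)!}{(-K_{X_{i}})^{n}/n!}=-\tfrac{1}{2}\sup_{\phi}\Bigl[-\tfrac{1}{\mathrm{Vol}(P_i)}\int_{P_i}\phi^{*}\,dp+\log\int_{\mathbb{R}^{n}}e^{-\phi}\,dx\Bigr]
\]
(the supremum being attained at the K\"ahler--Einstein metric) and then performs the substitutions $p\mapsto A^{t}p'$, $x\mapsto Ax'$ \emph{inside the supremum}, so that $\phi\mapsto\phi(A\cdot)$ becomes a bijection between competitors for $P_2$ and competitors for $P_1$; the Jacobian $\log\det A$ drops out of the $e^{-\phi}$ integral and no property of the maximizer beyond its existence is used. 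You instead work directly with the specific K\"ahler--Einstein potential, construct $\phi_2(x)=\phi_1(A^{T}x)-\log\det A$, and check by hand that it is volume-normalized and satisfies the toric Monge--Amp\`ere equation, then appeal to uniqueness before plugging into Proposition~\ref{prop:prop arithm vol in toric case}. Your route is more explicit (it actually exhibits the K\"ahler--Einstein metric on $X_2$) but requires the extra verification of the KE equation and the uniqueness statement, which the paper's variational argument sidesteps entirely. Note also that Proposition~\ref{prop:toric ke} does not itself assert uniqueness; that comes from \cite{bbegz} or \cite{ber-ber}, so you should cite one of those instead.
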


As a consequence, for $X$ a K-semistable toric Fano variety of dimension
$n$,
\begin{equation}
(\overline{-\mathcal{K}_{\mathcal{X}}})^{n+1}=\frac{(n+1)!}{2}\mathrm{vol}(X)\log\left(\frac{a}{\mathrm{vol}(X)}\right)\label{eq:family formula}
\end{equation}

where $a$ is a constant independent of the choice of $X$ within
a class of toric varieties with linearly equivalent moment polytopes.
More precisely, 
\begin{equation}
a=\mathrm{vol}(X)\exp(\frac{2(\overline{-\mathcal{K}_{\mathcal{X}}})^{n+1}/(n+1)!}{\mathrm{vol}(X)})\label{eq:family formula constant}
\end{equation}
and Proposition \ref{prop: linearly equivalent polytopes} ensures
the claimed independence. 
\begin{proof}
(of Proposition \ref{prop: linearly equivalent polytopes}) Recall
that, with heights taken with respect to Kähler-Einstein metrics,
\[
\frac{(\overline{-\mathcal{K}_{\mathcal{X}_{2}}})^{n+1}/(n+1)!}{\left(-K_{X_{2}}\right)^{n}/n!}=-\frac{1}{2}\sup_{\phi}-\frac{1}{\mathrm{vol}(P_{2})}\int_{P_{2}}\phi^{*}(p)\mathrm{d}p+\log\int_{\mathbb{R}^{n}}\exp(-\phi(x))\mathrm{d}x.
\]
Changing variables in the integrals, $p\mapsto A^{t}p'$ and $x\mapsto Ax'$
we get
\[
\frac{(\overline{-\mathcal{K}_{\mathcal{X}_{2}}})^{n+1}/(n+1)!}{\left(-K_{X_{2}}\right)^{n}/n!}=-\frac{1}{2}(\sup_{\phi(A\cdot)}-\frac{1}{\mathrm{vol}(P_{1})}\int_{P_{1}}\phi^{*}(A^{t}p')\mathrm{d}p'+\log\int_{\mathbb{R}^{n}}\exp(-\phi(Ax'))\mathrm{d}x'+\mathrm{\log detA)}.
\]
Next we rename $\phi'=\phi(A\cdot)$ and use that then $\phi'^{*}=\phi^{*}(A^{t}\cdot)$
to get the result. 
\end{proof}
\begin{example}
Recall the K-semistable toric Fano varieties $X_{q,p}$ parametrized
with two prime numbers from Example \ref{exa:toric family}. The corresponding
polytope $P(-K_{X_{p,q}})$ is the image of the polytope $P(-K_{\mathbb{P}^{1}\times\mathbb{P}^{1}})=\mathrm{conv}\{(1,1),(1,-1),(-1,1),(-1,-1)\}$
under the linear map $A$ given in matrix form by $\begin{bmatrix}\frac{1}{2p} & \frac{1}{2p}\\
\frac{-1}{2q} & \frac{1}{2q}
\end{bmatrix}$. Thus the family $\mathcal{F}=\{\mathbb{P}^{1}\times\mathbb{P}^{1},X_{p,q}:p,q\ \mathrm{prime\}}$
comprise an example of a family of K-semistable toric Fano varieties
with linearly equivalent moment polytopes. Thus by \ref{eq:family formula},
for $X\in\mathcal{F}$,
\[
(\overline{-\mathcal{K}_{\mathcal{X}}})^{n+1}=\frac{(n+1)!}{2}\mathrm{vol}(X)\log\left(\frac{a}{\mathrm{vol}(X)}\right)
\]
with, by \ref{eq:family formula constant}, \ref{lem:arithm vol of proj etc}
and a simple computation,
\[
a=\mathrm{vol}(\mathbb{P}^{1}\times\mathbb{P}^{1})\exp(\frac{2(\overline{-\mathcal{K}_{\mathbb{P}^{1}\times\mathbb{P}^{1}}})^{n+1}/(n+1)!}{\mathrm{vol}(\mathbb{P}^{1}\times\mathbb{P}^{1})})=4\exp(2-\log\pi^{2}).
\]
Recall also that $\text{vol \ensuremath{(-K_{X_{p,q}})}}=2/(pq)$
so that in this family the heights with respect to the Kähler-Einstein
metrics are explicitely computed by the previous formula.
\end{example}

\section{\label{sec:Donaldson's-toric-invariant}Sharp bounds on Donaldson's
toric Mabuchi functional }

Let $(X,L)$ be a polarized complex manifold and denote by $\mathcal{H}(X,L)$
the space of all smooth metrics $\psi$ on $L$ whose curvature form
$dd^{c}\psi$ is positive, $dd^{c}\psi>0.$

\subsection{\label{subsec:The-Mabuchi-functional}The Mabuchi functional (recap)}

The Mabuchi functional $\mathcal{M}$ on $\mathcal{H}(X,L)$ is defined,
up to addition by a constant, by declaring that its differential on
$\mathcal{H}(X,L)$ at a given point $\psi$ is represented by the
following measure on $X:$ 
\begin{equation}
d\mathcal{M}_{|\psi}:=\left(-S(\psi)+a\right)\frac{(dd^{c}\psi)^{n}}{n!},\,\,\,a:=n(-K_{X})\cdot L^{n-1}/L^{n},\label{eq:def of dM}
\end{equation}
 where $S(\psi)$ denotes the scalar curvature of the Kähler form
$(dd^{c}\psi),$ i.e. the trace of the Ricci curvature:
\[
S(\psi)\frac{(dd^{c}\psi)^{n}}{n!}:=\text{Ric }(dd^{c}\psi)\wedge\frac{(dd^{c}\psi)^{n-1}}{(n-1)!}.
\]
 Recall that the Ricci curvature $\text{Ric}(dd^{c}\psi)$ of the
Kähler form $dd^{c}\psi$ is the $(1,1)-$form defined as the curvature
of the metric on $-K_{X}$ induced by the volume form of $dd^{c}\psi.$
We have followed Donaldson's multiplicative normalizations in \cite[formula 3.2.1]{do1},
which differ from the original definition in \cite{mab}, where the
measure $\frac{(dd^{c}\psi)^{n}}{n!}$ on $X$ is volume-normalized.
At any rate, formula \ref{eq:def of dM} only determines the Mabuchi
functional $\mathcal{M}$ up to an additive constant. 

\subsubsection{The case when $X$ is a Fano manifold and $L=-K_{X}$}

We now specialize to the case when $L=-K_{X}$ and note that a choice
of reference metric $\psi_{0}$ in $\mathcal{C}^{0}(L)\cap\text{PSH\ensuremath{(L)}}$
induces a particular choice of Mabuchi functional, i.e. a functional
whose differential satisfies formula \ref{eq:def of dM}, that we
shall denote by $\mathcal{M}_{\psi_{0}}.$ This is a consequence of
the thermodynamical formalism introduced in \cite{berm6}, which expresses
\begin{equation}
\mathcal{M}_{\psi_{0}}(\psi):=\text{vol\ensuremath{(-K_{X})}}F_{\psi_{0}}\left(MA(\psi)\right),\label{eq:def of M psi not}
\end{equation}
 where $MA(\psi)$ is the probability measure on $X$ defined by the
normalized volume form of the Kähler metric $dd^{c}\psi:$
\begin{equation}
MA(\psi):=\frac{1}{n!}(dd^{c}\psi)^{n}/\text{\ensuremath{\text{vol}(L)}}\label{eq:def of MA}
\end{equation}
and $F_{\psi_{0}}\left(\mu\right)$ denotes the \emph{free energy}
\emph{functional }on the space $\mathcal{P}(X)$ of all probability
measures on $X,$defined as follows:
\begin{equation}
F_{\psi_{0}}(\mu):=-E_{\psi_{0}}(\mu)+\text{Ent}_{dV_{0}}(\mu)\in]-\infty,\infty]\label{eq:def of fri F}
\end{equation}
Here $\text{Ent}_{dV_{0}}(\mu)$ denotes the \emph{entropy} of $\mu$
relative to the volume form $dV_{0}$ on $X$ induced by $\psi_{0}$
(i.e. $dV_{0}=e^{-\psi_{0}}$ in the notation of Section \ref{subsec:Metrics-on- minus KX vs volume})
defined by 

\[
\text{Ent}_{dV_{0}}(\mu):=\int\log\frac{\mu}{dV_{0}}\mu
\]
when $\mu\in L^{1}(X,dV_{0})$ and otherwise $\text{Ent}_{dV_{0}}(\mu):=\infty.$
Furthermore, $E_{\psi_{0}}(\mu)$ is the \emph{pluricomplex energy}
of $\mu,$ relative to $\psi_{0},$ introduced in \cite{bbgz}, which
may be defined as a Legendre-Fenchel transform of the functional $\mathcal{E}_{\psi_{0}}/\text{vol}(L)$
(defined by formula \ref{eq:def of E beauti}). For our purposes it
will be enough to define $E_{\psi_{0}}(\mu)$ when $\mu$ is of the
form $\mu=MA(\psi)$ for $\psi$ in $\mathcal{C}^{0}(L)\cap\text{PSH\ensuremath{(L)}}:$
\begin{equation}
E_{\psi_{0}}(MA(\psi))=\frac{\mathcal{E}_{\psi_{0}}(\psi)}{\text{vol}(L)}-\int_{X}(\psi-\psi_{0})MA(\psi).\label{eq:energy of MA}
\end{equation}
We recall that formula \ref{eq:def of M psi not} follows readily
from the fact that on the subspace of all volume forms $\mu$ in $\mathcal{P}(X)$
the differential of $E_{\psi_{0}}$ at $\mu\in\mathcal{P}(X)$ is
represented by the function $\psi_{0}-\psi_{\mu}:$
\[
dE_{\psi_{0}|\mu}=-(\psi_{\mu}-\psi_{0})
\]
 (this formula is dual to formula \ref{eq:differential of E beauti}
in the sense of Legendre transforms; see \cite{berm6}). 
\begin{rem}
Formula \ref{eq:def of M psi not} defines $\mathcal{M}_{\psi_{0}}(\psi)$
on the space $\mathcal{C}^{0}(L)\cap\text{PSH\ensuremath{(L)}}$ as
a function taking values in $]-\infty,\infty].$ More generally, the
functional $\mathcal{M}_{\psi_{0}}(\psi)$ is well-defined as soon
as $E(MA(\psi))<\infty$ (see \cite{berm6,bbegz}). For $\psi$ smooth
formula \ref{eq:def of M psi not} is essentially equivalent to a
formula for the Mabuchi functional appearing in \cite{ti0} and \cite{ch0}.
\end{rem}

\subsubsection{The case when $X$ is a singular Fano variety}

In the case when $X$ is a singular Fano variety we will denote by
$\mathcal{H}(X,-K_{X})$ the space of all continuous metrics $\psi$
on $L$ such that $\psi$ is smooth on the regular locus $X_{\text{reg}}$
of $X$ and $dd^{c}\psi>0$ on $X_{\text{reg}}.$ 

\subsection{Proof of Theorem \ref{thm:Don inv intro}}

First recall the following basic inequality that holds on any Fano
variety \cite[Lemma 4.4]{bbegz}:

\begin{equation}
F_{\psi_{0}}\left(MA(\psi)\right)\geq\mathcal{\hat{D}}_{\psi_{0}}(\psi)\label{eq:F geq D}
\end{equation}
 as follows from the non-negativity of the relative entropy between
two probability measures (or Jensen's inequality). In fact, the following
identity holds \cite[Lemma 4.4]{bbegz}: 
\begin{equation}
\inf_{\mathcal{C}^{0}(L)\cap\text{PSH\ensuremath{(L)}}}F_{\psi_{0}}\left(MA(\psi)\right)=\inf_{\mathcal{C}^{0}(L)\cap\text{PSH\ensuremath{(L)}}}\mathcal{\hat{D}}_{\psi_{0}}(\psi),\label{eq:inf is inf given reference}
\end{equation}
(the two infima above may, equivalently, be restricted to $\mathcal{H}(X,L);$
see the regularization result in \cite{bdl1}).

Combining Theorem \ref{thm:main toric intro} with the inequality
\ref{eq:F geq D} the proof is concluded by invoking the following
formula relating $\mathcal{M}_{\psi_{P}}$(where $\psi_{P}$ is the
canonical toric reference defined by formula \ref{eq:def of psi P x})
to Donaldson's toric Mabuchi functional 
\begin{equation}
\mathcal{M}_{-K_{X}}(\psi):=\int_{\partial P}\psi^{*}d\sigma-n\int_{P}\psi^{*}dx-\int_{P}\log\det(\nabla^{2}\psi^{*})dx,\label{eq:Ds mab func text}
\end{equation}
 where $\psi^{*}$ denotes the Legendre transform of the $T-$invariant
metric $\psi\in\mathcal{H}(X,-K_{X})$ and $d\sigma$ is the measure
on $\partial P,$ absolutely continuous wrt the $(n-1)-$dimensional
Lebesgue measure $d\lambda_{\partial P},$ defined by $d\sigma=d\lambda_{\partial P}/\left\Vert l_{F}\right\Vert $
on a facet $F$ of $\partial P,$ where $\left\Vert l_{F}\right\Vert $
denotes the Euclidean norm of a primitive normal vector to $F.$
\begin{lem}
\label{lem:M psi P is Donaldson plus log}Let $X$ be an $n-$dimensional
toric Fano variety. The following identity holds on the space of all
$T-$invariant metrics in $\mathcal{H}(X,-K_{X}):$

\[
\mathcal{M}_{\psi_{P}}=\mathcal{M}_{-K_{X}}-\mathrm{vol}(-K_{X})\log\mathrm{vol}(-K_{X})
\]
 
\end{lem}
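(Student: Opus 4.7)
The plan is to unwind the thermodynamic formula $\mathcal{M}_{\psi_0}(\psi) = \mathrm{vol}(-K_X)\, F_{\psi_0}(MA(\psi))$ with $\psi_0 = \psi_P$ in the logarithmic toric chart of Section 3.1.3, and then match the resulting expression with Donaldson's formula \ref{eq:Ds mab func text} term-by-term. First, I would expand
\[
\mathcal{M}_{\psi_P}(\psi) \;=\; -\mathcal{E}_{\psi_P}(\psi) \;+\; \mathrm{vol}(-K_X)\!\int_X (\psi-\psi_P)\,MA(\psi) \;+\; \mathrm{vol}(-K_X)\,\mathrm{Ent}_{e^{-\psi_P}}(MA(\psi)),
\]
which follows by combining (4.2) with (4.5) and the definition of $F_{\psi_0}$ in (4.4).

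Next, I would translate each piece into $T$-invariant toric data. By Proposition 3.4 we have $\mathcal{E}_{\psi_P}(\psi) = -\int_P \psi^*\, dy$; by the Monge--Amp\`ere push-forward of Section 3.1.4, $\int_X f(x)(dd^c\psi)^n/n! = \int_{\mathbb{R}^n} f(x)\det(\nabla^2\psi)\,dx$ for any $T$-invariant $f$; and dividing the push-forward densities of $MA(\psi)$ and $e^{-\psi_P}$ identifies the Radon--Nikodym derivative $MA(\psi)/e^{-\psi_P}$ with the explicit function $\det(\nabla^2\psi(x))\,e^{\psi_P(x)}/(\pi^n\,\mathrm{vol}(-K_X))$ on $T_c$. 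Substituting these into the three terms above, the two $\int_{\mathbb{R}^n}\psi_P\det(\nabla^2\psi)\,dx$ contributions (one from the energy piece, one from the entropy piece via $\log(MA/e^{-\psi_P})$) cancel, leaving
\[
\mathcal{M}_{\psi_P}(\psi) \;=\; \int_P u\, dy \;+\; \int_{\mathbb{R}^n}\!\psi\,\det(\nabla^2\psi)\,dx \;+\; \int_{\mathbb{R}^n}\!\det(\nabla^2\psi)\log\det(\nabla^2\psi)\,dx \;+\; C_0,
\]
with $u=\psi^*$ and $C_0$ a constant that depends only on $\mathrm{vol}(-K_X)$ and $n$.

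The main step is now to pass from $\mathbb{R}^n$ to $P$ via the Legendre transform. Using the duality identity $\psi(x)+u(y) = \langle x,y\rangle$ at $y=\nabla\psi(x)$, a standard change of variables gives
\[
\int \psi\,\det(\nabla^2\psi)\,dx = \int_P\!\bigl(\langle\nabla u,y\rangle - u\bigr)\,dy,\qquad \int \det(\nabla^2\psi)\log\det(\nabla^2\psi)\,dx = -\!\int_P \log\det(\nabla^2 u)\,dy.
\]
The key geometric input is then integration by parts on the polytope: $\int_P\langle\nabla u,y\rangle\,dy = \int_{\partial P} u\,\langle y,\nu\rangle\, dS - n\!\int_P u\,dy$. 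Here the Fano hypothesis enters decisively through formula \ref{eq:P in Fano case}: on each facet $F$, the relation $\langle l_F,p\rangle = -1$ forces $\langle y,\nu\rangle = 1/\|l_F\|$ with outward unit normal $\nu = -l_F/\|l_F\|$, so $\langle y,\nu\rangle\, dS = d\lambda_{\partial P}/\|l_F\| = d\sigma$ exactly reproduces Donaldson's boundary measure. After the $\int_P u\, dy$ terms cancel, one is left precisely with $\int_{\partial P} u\, d\sigma - n\int_P u\, dy - \int_P\log\det(\nabla^2 u)\, dy$, i.e.\ with $\mathcal{M}_{-K_X}(\psi)$, plus the explicit constant $C_0$ which is evaluated to be $-\mathrm{vol}(-K_X)\log\mathrm{vol}(-K_X)$.

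The proof is essentially bookkeeping, so the only real hurdle is the integration-by-parts step: one has to know the Fano normalization $\langle l_F,p\rangle = -1$ \emph{and} the definition of $d\sigma$ simultaneously in order to recognize the boundary integrand. Everything else (the cancellation of the $\psi_P$-terms, the Legendre-transform identities, and the identification of the residual additive constant) is forced once the toric dictionary from Section 3.1 and Proposition 3.4 are in hand.
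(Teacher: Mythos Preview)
Your argument is correct and follows essentially the same route as the paper's: expand $\mathcal{M}_{\psi_P}$ via the thermodynamic formula \eqref{eq:def of M psi not}--\eqref{eq:energy of MA}, push everything to $\mathbb{R}^n$ using Proposition~\ref{prop:prop arithm vol in toric case} and the toric Monge--Amp\`ere push-forward, cancel the $\psi_P$-contributions, and then Legendre-transform to land on Donaldson's expression. The only cosmetic difference is that for the key identity $\int_{\mathbb{R}^n}\psi\det(\nabla^2\psi)\,dx=\int_{\partial P}u\,d\sigma-(n+1)\int_P u\,dy$ the paper simply cites \cite[Lemma~4.7]{ber-ber}, whereas you derive it by hand via the divergence theorem on $P$ together with the Fano normalisation $\langle l_F,p\rangle=-1$ to identify $\langle y,\nu\rangle\,dS$ with $d\sigma$.
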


\begin{proof}
This formula is essentially the content of \cite[Prop 4.6]{ber-ber},
but since the normalizations are a bit different we recall the proof.
First identifying a toric metric $\psi$ with a convex function on
$\R^{n}$ (as in Section \ref{subsec:Logarithmic-coordinates-and})
formula \ref{eq:def of M psi not}, combined with formula \ref{eq:energy of MA},
yields
\[
\mathcal{M}_{\psi_{P}}(\psi)=-\mathcal{E}_{\psi_{P}}(\psi)+\int_{\R^{n}}(\psi-\psi_{P})(dd^{c}\psi)^{n}/n!+\int_{\R^{n}}\log\left(\frac{MA(\psi)}{e^{-\psi_{P}}dx}\right)\mathrm{vol}(-K_{X})MA(\psi)=
\]
\[
=\int_{P}\psi^{*}d\lambda+\int_{\R^{n}}\psi(dd^{c}\psi)^{n}/n!+\int_{\R^{n}}\log\det(\nabla^{2}\psi)\det(\nabla^{2}\psi)-\mathrm{vol}(-K_{X})\log\mathrm{vol}(-K_{X}).
\]
 By \cite[Lemma 4.7]{ber-ber} making the change of variables $y=\nabla\psi$
the second term above may be expressed 
\begin{equation}
\int_{\R^{n}}\psi(dd^{c}\psi)^{n}/n!=\int_{\partial P}\psi^{*}d\sigma-(n+1)\int udp,\label{eq:integral psi ddpsi on Rn}
\end{equation}
 giving 
\[
\mathcal{M}_{\psi_{P}}(\psi)=\int_{\partial P}\psi^{*}d\sigma-n\int_{P}\psi^{*}d\lambda+\int_{\R^{n}}\log\det(\nabla^{2}\psi)\det(\nabla^{2}\psi)-\mathrm{vol}(-K_{X})\log\mathrm{vol}(-K_{X}).
\]
Again making the change of variables $y=\nabla\psi$ in the remaining
integral over $\R^{n}$ concludes the proof, using the standard relation
$\det(\nabla^{2}\psi)(x)\det(\nabla^{2}\psi^{*})(\nabla\psi(x))=1$
(which follows from the fact that the map $y\mapsto\nabla\psi^{*}(y)$
is the inverse of $x\mapsto\nabla\psi(x)$).
\end{proof}

\section{\label{sec:Connections-to-the ar}Relations to the arithmetic Mabuchi
functional}

Given an integral model $(\mathcal{X},\mathcal{L})$ of a polarized
variety $(X,L)$ consider the \emph{arithmetic Mabuchi functional}
$\mathcal{M}_{(\mathcal{X},\mathcal{L})}$ on $\mathcal{H}(X,L)$
defined by

\begin{equation}
\mathcal{M}_{(\mathcal{X},\mathcal{L})}(\psi):=\frac{a}{(n+1)!}\overline{\mathcal{L}}^{n+1}+\frac{1}{n!}\overline{\mathcal{K}}_{\mathcal{X}}\cdot\overline{\mathcal{L}}^{n},\,\,\,\,a=-n(K_{X}\cdot L^{n-1})/L^{n}\label{eq:def of arithm Mab}
\end{equation}
where $\overline{\mathcal{L}}=(\mathcal{L},\psi)$ and $\overline{\mathcal{K}}_{\mathcal{X}}$
is endowed with the metric induced by the measure $MA(\psi)$ on $X,$
i.e. the normalized volume form of the Kähler form $dd^{c}\psi.$
As discussed in Section \ref{subsec:The-arithmetic-K-energy} this
functional coincides, up to additive and multiplicative normalizations,
with the arithmetic Mabuchi functional introduced in \cite{o}.
\begin{lem}
The differential of the functional $\psi\mapsto2\mathcal{M}_{(\mathcal{X},\mathcal{L})}(\mathcal{L},\psi)$
on $\mathcal{H}(X,L)$ satisfies the defining formula \ref{eq:def of dM}
of the Mabuchi functional.
\end{lem}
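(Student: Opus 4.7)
The plan is to compute the first variation of each of the two arithmetic intersection numbers in $2\mathcal{M}_{(\mathcal{X},\mathcal{L})}$ at a smooth strictly psh metric $\psi$ and verify that they combine to $(-S(\psi)+a)(dd^c\psi)^n/n!$, matching formula \ref{eq:def of dM}.

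For the pure power $\overline{\mathcal{L}}^{n+1}$, I would read off the variation from the chain
\[
\frac{\overline{\mathcal{L}}^{n+1}}{(n+1)!}=\widehat{\mathrm{vol}}_\chi(\overline{\mathcal{L}})=\frac{1}{2}\mathcal{E}_{\psi_0}(\psi)+\text{const},
\]
obtained by combining \ref{eq:vol chi as intersection} and \ref{eq:change of metric formula for chi vol}, together with the defining property \ref{eq:differential of E beauti} of $\mathcal{E}_{\psi_0}$. This gives $d\overline{\mathcal{L}}^{n+1}|_\psi(u)=\tfrac{n+1}{2}\int_X u\,(dd^c\psi)^n$. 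Polarizing this relation yields the standard change-of-metric rule for a mixed intersection $\overline{\mathcal{L}_0}\cdot\overline{\mathcal{L}_1}\cdots\overline{\mathcal{L}_n}$ when only the metric on $\overline{\mathcal{L}_i}$ is shifted by $u$: the variation equals $\tfrac12\int_X u\,\bigwedge_{j\neq i}dd^c\phi_j$.

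Next I would split the first variation of $\overline{\mathcal{K}}_\mathcal{X}\cdot\overline{\mathcal{L}}^n$ into two pieces: (i) varying the $n$ copies of $\psi$ while holding the metric on $\overline{\mathcal{K}}_\mathcal{X}$ fixed, and (ii) varying the induced metric on $\overline{\mathcal{K}}_\mathcal{X}$, which depends on $\psi$ only through $MA(\psi)$. For (i), the metric on $K_X$ induced by $MA(\psi)$ equals $-\phi_{MA(\psi)}$ (with $e^{-\phi_{MA(\psi)}}=MA(\psi)$ locally), and since $MA(\psi)$ and $(dd^c\psi)^n/n!$ differ only by a multiplicative constant, its curvature is $dd^c(-\phi_{MA(\psi)})=-\mathrm{Ric}(dd^c\psi)$. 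Using the defining relation $\mathrm{Ric}(dd^c\psi)\wedge (dd^c\psi)^{n-1}/(n-1)!=S(\psi)(dd^c\psi)^n/n!$ this contribution simplifies to
\[
-\frac{n}{2}\int_X u\,\mathrm{Ric}(dd^c\psi)\wedge(dd^c\psi)^{n-1}=-\frac{1}{2}\int_X u\,S(\psi)(dd^c\psi)^n.
\]
For (ii), the variation of the metric on $K_X$ is $\delta MA(\psi)/MA(\psi)$, so integrating against $(dd^c\psi)^n=n!\,\mathrm{vol}(L)\,MA(\psi)$ collapses the contribution to $\tfrac{n!\,\mathrm{vol}(L)}{2}\int_X\delta MA(\psi)$, which vanishes because $MA(\psi)$ is a probability measure on $X$.

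Assembling the two computations gives
\[
d[2\mathcal{M}_{(\mathcal{X},\mathcal{L})}]|_\psi(u)=\frac{2a}{(n+1)!}\cdot\frac{n+1}{2}\int_X u\,(dd^c\psi)^n-\frac{2}{n!}\cdot\frac{1}{2}\int_X u\,S(\psi)(dd^c\psi)^n=\int_X u\bigl(-S(\psi)+a\bigr)\frac{(dd^c\psi)^n}{n!},
\]
as required. The delicate point is keeping track of the factor of $\tfrac12$ in the arithmetic change-of-metric formula (visible in \ref{eq:change of metric formula for chi vol}), which is precisely why the statement is formulated for $2\mathcal{M}_{(\mathcal{X},\mathcal{L})}$ rather than $\mathcal{M}_{(\mathcal{X},\mathcal{L})}$; the second obstacle is recognizing that contribution (ii) is forced to vanish by the probability-normalization of $MA(\psi)$, so that the entropy-type term of the Mabuchi functional is entirely encoded in the curvature of the $MA(\psi)$-metric on $K_X$ appearing in contribution (i).
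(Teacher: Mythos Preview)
Your proposal is correct and follows essentially the same approach as the paper: differentiate $\overline{\mathcal{L}}^{n+1}$ directly, then split the variation of $\overline{\mathcal{K}}_{\mathcal{X}}\cdot\overline{\mathcal{L}}^n$ via the Leibniz rule into a piece where the $n$ copies of $\psi$ vary (producing the Ricci/scalar curvature term) and a piece where the induced metric on $K_X$ varies (which vanishes by the volume normalization). The only difference is in how the key change-of-metric formula for arithmetic intersections is justified: the paper invokes the restriction formula \cite[Prop~2.3.1]{b-g-s} together with Lemma~\ref{lem:Stein} to obtain \eqref{eq:arithm inters form for trivial} directly, whereas you recover the same identity by polarizing the relation $\overline{\mathcal{L}}^{n+1}/(n+1)!=\widehat{\mathrm{vol}}_\chi(\overline{\mathcal{L}})$ combined with Lemma~\ref{lem:change of metric}.
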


\begin{proof}
As pointed out in \cite{o} this formula can be deduced from the formula
for the Mabuchi functional in \cite{ti0,ch0}. But for completeness
and to check the normalizations we provide a simple direct proof.
First recall the following property of arithmetic intersection numbers
which holds if $\mathcal{L}_{0}\rightarrow\mathcal{X}$ is the trivial
line bundle (which is a consequence of the restriction formula \cite[Prop 2.3.1]{b-g-s}
and Lemma \ref{lem:Stein}): 
\begin{equation}
\left(\mathcal{L}_{0},\phi_{0}\right)\cdot\left(\mathcal{L}_{1},\phi_{0}\right)\cdot...\cdot\left(\mathcal{L}_{n},\phi_{n}\right)=\frac{1}{2}\int_{X}\phi_{0}dd^{c}\phi_{1}\wedge\cdots\wedge dd^{c}\phi_{n},\label{eq:arithm inters form for trivial}
\end{equation}
 where $\phi_{0}$ is the globally well-defined function on $X$ defined
by formula \ref{eq:def of phi U} when $e_{U}$ is the standard global
trivialization $1$ of the trivial line bundle over $X,$ i.e. $\phi_{0}/2=-\log\left\Vert s\right\Vert _{\phi_{0}},$
where $s$ is a global trivialization of $\mathcal{L}.$ In particular,
differentiating along a curve $t\mapsto\psi_{t}$ in $\mathcal{H}(X,L)$
and using the symmetry of arithmetic intersection numbers gives
\[
\frac{d}{dt}\left(\left(\mathcal{L},\psi_{t}\right)^{n+1}\right)=(n+1)\left(\mathcal{L}_{0},\frac{d\psi_{t}}{dt}\right)\cdot\left(\mathcal{L},\psi_{t}\right)^{n}=\frac{1}{2}\int_{X}\frac{d\psi_{t}}{dt}(dd^{c}\psi)^{n}
\]
 where $\frac{d\psi_{t}}{dt}$ is a globally well-defined function
on $X$ and can thus be identified with a metric on the trivial line
bundle that we denote by $\mathcal{L}_{0}.$ Likewise, denoting by
$\rho_{t}$ a local density for $MA(\psi_{t})$ with respect the Euclidean
measure defined by local holomorphic coordinates,
\begin{equation}
\frac{d}{dt}\left(\left(\mathcal{K}_{\mathcal{X}},\log\rho_{t}\right)\left(\mathcal{L},\psi_{t}\right)^{n}\right)=\left(\mathcal{K}_{\mathcal{X}},\log\rho_{t}\right)n\left(\mathcal{L},\frac{d\psi_{t}}{dt}\right)\cdot\left(\mathcal{L},\psi_{t}\right)^{n-1}+\left(\left(\mathcal{L}_{0},\frac{d}{dt}\log\rho_{t}\right)\cdot\left(\mathcal{L},\psi_{t}\right)^{n}\right)\label{eq:pf of lemma d of arithm mab}
\end{equation}
where we have used Leibniz rule. Applying formula \ref{eq:arithm inters form for trivial},
the second term above may, after multiplication by $2,$ be expressed
as
\[
=\int_{X}\frac{d}{dt}\log\rho_{t}(dd^{c}\psi_{t})^{n}=n!\mathrm{vol}(L)\int_{X}\frac{d}{dt}\log\rho_{t}\rho_{t}=n!\mathrm{vol}(L)\frac{d}{dt}\int_{X}\rho_{t}=0,
\]
using in the last equality that $\int_{X}\rho_{t}=\mathrm{vol}(L)$
for any $t.$ Likewise, applying formula \ref{eq:arithm inters form for trivial}
to the first term in formula \ref{eq:pf of lemma d of arithm mab}
yields 
\[
2\left(\mathcal{K}_{\mathcal{X}},\log\rho_{t}\right)\left(\mathcal{L},\frac{d\psi_{t}}{dt}\right)/n=\int_{X}\frac{d\psi_{t}}{dt}dd^{c}\left(\log\rho_{t}\right)\wedge(dd\psi_{t})^{n-1}=-\int_{X}\frac{d\psi_{t}}{dt}\text{Ric}(dd^{c}\psi_{t})\wedge(dd\psi_{t})^{n-1}.
\]
 All in all, this concludes the proof.
\end{proof}
The following proposition relates the arithmetic Mabuchi functional
$\mathcal{M}_{(\mathcal{X},\mathcal{-K_{\mathcal{X}}})}$ to Donaldson's
toric Mabuchi functional $\mathcal{M}_{-K_{X}}$ (formula \ref{eq:Ds mab func text}):
\begin{prop}
\label{prop:arithm Mab as Don Mab}Given a toric Fano variety $X$
denote by $\mathcal{X}$ its canonical integral model. Then the following
formula holds for any $T-$invariant metric in $\mathcal{H}(X,-K_{X}):$
\[
2\mathcal{M}_{(\mathcal{X},\mathcal{-K_{\mathcal{X}}})}=\mathcal{M}_{-K_{X}}-\mathrm{vol}(-K_{X})\log\mathrm{vol}(-K_{X})
\]
\end{prop}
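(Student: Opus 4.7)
By Lemma \ref{lem:M psi P is Donaldson plus log} the proposition is equivalent to the cleaner identity $2\mathcal{M}_{(\mathcal{X},-\mathcal{K}_{\mathcal{X}})}(\psi) = \mathcal{M}_{\psi_P}(\psi)$ for every $T$-invariant $\psi\in\mathcal{H}(X,-K_X)$. The lemma immediately preceding the proposition shows that $2\mathcal{M}_{(\mathcal{X},-\mathcal{K}_\mathcal{X})}$ satisfies the Mabuchi differential formula \ref{eq:def of dM}, and by construction so does $\mathcal{M}_{\psi_P}$ (via \ref{eq:def of M psi not}). Hence the two differ by a constant on $\mathcal{H}(X,-K_X)$, and a single evaluation would suffice in principle; but I will identify both sides directly, which also pins the constant down to $0$.

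\textbf{Unwinding the arithmetic side.} Write $\mathcal{L}:=-\mathcal{K}_\mathcal{X}$, $\overline{\mathcal{L}}=(\mathcal{L},\psi)$, and let $\phi_\mu$ be the metric on $\mathcal{L}$ with associated measure $MA(\psi)$. The metric on $\overline{\mathcal{K}}_\mathcal{X}$ in \ref{eq:def of arithm Mab} is the dual of $\phi_\mu$, so $\overline{\mathcal{K}}_\mathcal{X} = -(\mathcal{L},\phi_\mu)$ in the arithmetic Chow group. Since $(\mathcal{L},\phi_\mu)-\overline{\mathcal{L}}=(\mathcal{O}_X,\phi_\mu-\psi)$ with $\phi_\mu-\psi=\log\bigl(e^{-\psi}/MA(\psi)\bigr)$ a globally defined function on $X$, the bilinear formula \ref{eq:arithm inters form for trivial} gives
\[
\overline{\mathcal{K}}_\mathcal{X}\cdot\overline{\mathcal{L}}^n \;=\; -\overline{\mathcal{L}}^{n+1} \;+\; \tfrac{1}{2}\int_X \log\!\bigl(MA(\psi)/e^{-\psi}\bigr)\,(dd^c\psi)^n.
\]
Using $(dd^c\psi)^n=n!\,\mathrm{vol}(L)\,MA(\psi)$ converts the last integral into $n!\,\mathrm{vol}(L)\,\mathrm{Ent}_{e^{-\psi}}(MA(\psi))$. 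Substituting into \ref{eq:def of arithm Mab} with $a=n$, the $\overline{\mathcal{L}}^{n+1}$ coefficients combine to $-2/(n+1)!$, which by Proposition \ref{prop:prop arithm vol in toric case} equals $\int_P\psi^*\,dy$; thus
\[
2\mathcal{M}_{(\mathcal{X},-\mathcal{K}_\mathcal{X})}(\psi) \;=\; \int_P\psi^*\,dy \;+\; \mathrm{vol}(L)\,\mathrm{Ent}_{e^{-\psi}}\!\bigl(MA(\psi)\bigr).
\]

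\textbf{Unwinding the complex-geometric side.} By \ref{eq:def of M psi not} and \ref{eq:energy of MA},
\[
\mathcal{M}_{\psi_P}(\psi) \;=\; -\mathcal{E}_{\psi_P}(\psi) \;+\; \int_X(\psi-\psi_P)\tfrac{(dd^c\psi)^n}{n!} \;+\; \mathrm{vol}(L)\,\mathrm{Ent}_{e^{-\psi_P}}\!\bigl(MA(\psi)\bigr).
\]
The elementary change-of-reference identity $\mathrm{Ent}_{e^{-\psi_P}}(\mu) = \mathrm{Ent}_{e^{-\psi}}(\mu) + \int(\psi_P-\psi)\mu$ applied to $\mu=MA(\psi)$ cancels the two middle contributions, and \ref{eq:toric arithm vol as beautiful E} combined with Proposition \ref{prop:prop arithm vol in toric case} yields $-\mathcal{E}_{\psi_P}(\psi)=\int_P\psi^*\,dy$. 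What remains agrees verbatim with the expression from the previous paragraph, proving $2\mathcal{M}_{(\mathcal{X},-\mathcal{K}_\mathcal{X})} = \mathcal{M}_{\psi_P}$; invoking Lemma \ref{lem:M psi P is Donaldson plus log} then gives the stated formula.

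\textbf{Main obstacle.} The only delicate step is the arithmetic one: correctly reading the metric on $\overline{\mathcal{K}}_\mathcal{X}$ in \ref{eq:def of arithm Mab} as the dual of $\phi_\mu$, and bookkeeping the signs and factorials when reducing $\overline{\mathcal{K}}_\mathcal{X}\cdot\overline{\mathcal{L}}^n$ to $-\overline{\mathcal{L}}^{n+1}$ plus an entropy integral via \ref{eq:arithm inters form for trivial}. Once that is done, the toric ingredient enters only through Proposition \ref{prop:prop arithm vol in toric case}, and the remaining algebra is pure inspection, with both sides equal to $\int_P\psi^*\,dy + \mathrm{vol}(L)\,\mathrm{Ent}_{e^{-\psi}}(MA(\psi))$.
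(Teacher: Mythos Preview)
Your proposal is correct and follows essentially the same route as the paper's proof. Both arguments reduce to showing $2\mathcal{M}_{(\mathcal{X},-\mathcal{K}_\mathcal{X})}=\mathcal{M}_{\psi_P}$ by peeling off the trivial summand $\overline{\mathcal{L}}+\overline{\mathcal{K}}_{\mathcal{X}}$ via formula~\ref{eq:arithm inters form for trivial}, then using the toric identity $-\mathcal{E}_{\psi_P}(\psi)=\int_P\psi^*\,dy$ (Proposition~\ref{prop:prop arithm vol in toric case} / formula~\ref{eq:toric arithm vol as beautiful E}) to match the remaining terms with the free-energy definition~\ref{eq:def of M psi not} of $\mathcal{M}_{\psi_P}$, before invoking Lemma~\ref{lem:M psi P is Donaldson plus log}; the only cosmetic difference is that you route both sides through the common form $\int_P\psi^*\,dy+\mathrm{vol}(L)\,\mathrm{Ent}_{e^{-\psi}}(MA(\psi))$, whereas the paper identifies the arithmetic side directly with $\mathcal{M}_{\psi_P}$.
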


\begin{proof}
In this case $a=n$ and we can thus decompose $\mathcal{M}_{(\mathcal{X},\mathcal{L})}(\psi)$
as
\begin{equation}
\frac{1}{(n+1)!}\overline{\mathcal{L}}^{n+1}+\frac{1}{n!}(\overline{\mathcal{L}}+\overline{\mathcal{K}}_{\mathcal{X}})\cdot\overline{\mathcal{L}}^{n}=-\frac{1}{(n+1)!}\overline{\mathcal{L}}^{n+1}+\frac{1}{2}\int\log(\frac{MA(\psi)}{e^{-\psi}})(dd^{c}\psi)^{n}/n!,\label{eq:decomp of arithm Mab in Fano case}
\end{equation}
where, in the last equality, we have exploited that $\mathcal{L}+\mathcal{K}_{\mathcal{X}}$
is trivial so that formula \ref{eq:arithm inters form for trivial}
applies. Applying formula \ref{eq:toric arithm vol as beautiful E}
to the first term in the rhs above thus gives 
\[
2\mathcal{M}_{(\mathcal{X},\mathcal{L})}(\psi):=-\mathcal{E}_{\psi_{P}}(\psi)+\int\log(\frac{MA(\psi)}{e^{-\psi}})(dd^{c}\psi)^{n}/n!=
\]
\[
=\mathrm{vol}(-K_{X})\left(-\frac{1}{V(X)}\mathcal{E}_{\psi_{P}}(\psi)+\left\langle \psi-\psi_{P},MA(\psi)\right\rangle +\int\log(\frac{MA(\psi)}{e^{-\psi_{P}}})MA(\psi)\right).
\]
 The rhs in the last equation above equals $\mathcal{M}_{\psi_{P}}(\psi)$
(by definition \ref{eq:def of M psi not}). Invoking Lemma \ref{lem:M psi P is Donaldson plus log}
thus concludes the proof.
\end{proof}
Next, consider an arithmetic Fano variety $\mathcal{X}$ (defined
in Section \ref{subsec:Arithmetic-Fano-varieties}). Denote by $\mathcal{\hat{D}}_{\Z}(\psi)$
the functional defined by formula \ref{eq:def of Ding Z}, corresponding
to the integral model $\mathcal{L}=-\mathcal{K}_{\mathcal{X}}.$ In
this arithmetic setup the following variants of the inequality \ref{eq:F geq D}
and the identity \ref{eq:inf is inf given reference} hold. 
\begin{prop}
\label{prop:inf arithm Mab vs Ding}When $\mathcal{L}=-\mathcal{K}_{\mathcal{X}}$
the following relations hold: 
\[
2\mathcal{M}_{(\mathcal{X},-\mathcal{K}_{\mathcal{X}})}\geq\mathrm{vol}(-K_{X})\mathcal{\hat{D}}_{\Z}
\]
 and 
\[
\inf_{\mathcal{C}^{0}(L)\cap\text{PSH\ensuremath{(L)}}}2\mathcal{M}_{(\mathcal{X},\mathcal{L})}=\mathrm{vol}(-K_{X})\inf_{\mathcal{C}^{0}(L)\cap\text{PSH\ensuremath{(L)}}}\mathcal{\hat{D}}_{\Z}.
\]
\end{prop}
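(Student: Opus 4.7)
The plan is to reduce both statements to their complex-geometric analogs on the space of psh metrics, namely the pointwise inequality \ref{eq:F geq D} and the infimum identity \ref{eq:inf is inf given reference}, by producing a clean closed-form identity between $2\mathcal{M}_{(\mathcal{X},-\mathcal{K}_{\mathcal{X}})}$ and the free energy $F_{\psi_0}(MA(\psi))$, up to a global additive constant shared with the corresponding expression for $\mathcal{\hat{D}}_{\Z}$.

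The starting point is the decomposition established in the proof of Proposition \ref{prop:arithm Mab as Don Mab}: since $\mathcal{L}+\mathcal{K}_{\mathcal{X}}$ is the trivial line bundle, identity \ref{eq:arithm inters form for trivial} applied to equation \ref{eq:decomp of arithm Mab in Fano case} yields
\[
2\mathcal{M}_{(\mathcal{X},-\mathcal{K}_{\mathcal{X}})}(\psi) = -\frac{2\overline{\mathcal{L}}^{n+1}}{(n+1)!} + \int_X \log\!\left(\frac{MA(\psi)}{e^{-\psi}}\right) \frac{(dd^c\psi)^n}{n!}.
\]
The next step is to rewrite the integral term using the definitions of entropy and pluricomplex energy. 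Using $(dd^c\psi)^n/n! = \mathrm{vol}(L)\, MA(\psi)$, splitting $\log(MA(\psi)/e^{-\psi}) = \log(MA(\psi)/e^{-\psi_0}) + (\psi_0-\psi)$, and invoking formula \ref{eq:energy of MA} for $E_{\psi_0}(MA(\psi))$, a short bookkeeping check produces
\[
\int_X \log\!\left(\frac{MA(\psi)}{e^{-\psi}}\right) \frac{(dd^c\psi)^n}{n!} = \mathrm{vol}(L)\,F_{\psi_0}(MA(\psi)) + \mathcal{E}_{\psi_0}(\psi).
\]

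The third step is to absorb the $\mathcal{E}_{\psi_0}(\psi)$ term into the arithmetic volume via Lemma \ref{lem:change of metric} (equivalently formula \ref{eq:naive arithm Ding in terms of Ding}), which gives $\mathcal{E}_{\psi_0}(\psi) = 2\overline{\mathcal{L}}^{n+1}/(n+1)! - \mathrm{vol}(L)\,C_0$ with $C_0$ the same constant appearing in \ref{eq:naive arithm Ding in terms of Ding}. Combining the previous displays collapses the formula to
\[
2\mathcal{M}_{(\mathcal{X},-\mathcal{K}_{\mathcal{X}})}(\psi) = \mathrm{vol}(L)\bigl(F_{\psi_0}(MA(\psi)) - C_0\bigr),
\]
while \ref{eq:naive arithm Ding in terms of Ding} reads $\mathcal{\hat{D}}_{\Z}(\psi) = \mathcal{\hat{D}}_{\psi_0}(\psi) - C_0$. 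The pointwise inequality is now immediate from \ref{eq:F geq D}, and taking the infimum over $\mathcal{C}^0(L)\cap\mathrm{PSH}(L)$ and applying \ref{eq:inf is inf given reference} yields the equality of infima.

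I do not expect a serious obstacle: the entire argument is a reorganization of known identities, and the main care is in tracking the normalization constants and verifying that the manipulation of the mixed arithmetic intersection numbers in the first step is justified (which is exactly what \ref{eq:arithm inters form for trivial} provides in the Fano setting). A minor subtlety is that $\mathcal{M}_{(\mathcal{X},-\mathcal{K}_{\mathcal{X}})}$ was originally introduced on $\mathcal{H}(X,-K_X)$, whereas the infimum in the statement ranges over $\mathcal{C}^0(-K_X)\cap\mathrm{PSH}(-K_X)$; the free-energy formula above provides the natural extension to this larger class, which matches exactly the space on which \ref{eq:inf is inf given reference} is formulated, so no further approximation argument is needed.
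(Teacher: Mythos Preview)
Your proposal is correct and follows essentially the same approach as the paper: both start from the decomposition \ref{eq:decomp of arithm Mab in Fano case}, express $2\mathcal{M}_{(\mathcal{X},-\mathcal{K}_{\mathcal{X}})}$ as $\mathrm{vol}(L)\,F_{\psi_0}(MA(\psi))$ plus the same additive constant that appears in $\mathcal{\hat{D}}_{\Z}=\mathcal{\hat{D}}_{\psi_0}+C_0$, and then invoke \ref{eq:F geq D} and \ref{eq:inf is inf given reference}. The only organizational difference is that the paper first derives the pointwise inequality directly from nonnegativity of the relative entropy $\mathrm{Ent}_{e^{-\psi}}\bigl((dd^c\psi)^n/n!\bigr)$ (plus a scaling argument) before introducing $\psi_0$, whereas you introduce the reference metric immediately; note also that your sign on $C_0$ is flipped relative to \ref{eq:naive arithm Ding in terms of Ding}, but since you flip it consistently in both identities the comparison is unaffected.
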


\begin{proof}
First note that the second term in the decomposition \ref{eq:decomp of arithm Mab in Fano case}
of $\mathcal{M}_{(\mathcal{X},-\mathcal{K}_{\mathcal{X}})}(\psi)$
is precisely the entropy of $(dd^{c}\psi)^{n}/n!$ relative to $e^{-\psi}:$
\[
\mathcal{M}_{(\mathcal{X},-\mathcal{K}_{\mathcal{X}})}(\psi)=-\frac{(\mathcal{L},\psi)^{n+1}}{(n+1)!}+\text{Ent}_{e^{-\psi}}\left((dd^{c}\psi)^{n}/n!\right).
\]
 Since the entropy between two probability measure is non-negative
(by Jensen's inequality) this proves the inequality in the proposition
when the measure $e^{-\psi}$ has unit total volume. The general case
then follows from a simple scaling argument. Next, to prove the identity
in the proposition fix a reference metric $\psi_{0}$ in $\mathcal{H}(X,-K_{X})$
and rewrite the previous formula as
\begin{equation}
\frac{\mathcal{M}_{(\mathcal{X},-\mathcal{K}_{\mathcal{X}})}(\psi)}{\mathrm{vol}(-K_{X})}=-\left(\frac{(\mathcal{L},\psi)^{n+1}}{(n+1)!\mathrm{vol}(-K_{X})}+\left\langle \psi-\psi_{0},MA(\psi)\right\rangle \right)+\frac{1}{2}\text{Ent}_{e^{-\psi_{0}}}\left(MA(\psi)\right).\label{eq:arithm Mab as free energy}
\end{equation}
Accordingly, expressing $(\mathcal{L},\psi)^{n+1}=(\mathcal{L},\psi_{0})^{n+1}+(n+1)!\mathcal{E}_{\psi_{0}}(\psi)/2,$
using Lemma \ref{lem:change of metric}, gives
\[
\frac{\mathcal{M}_{(\mathcal{X},-\mathcal{K}_{\mathcal{X}})}(\psi)}{\mathrm{vol}(-K_{X})}=-\frac{1}{2}F_{\psi_{0}}\left(MA(\psi)\right)-\frac{1}{(n+1)!}(\mathcal{L},\psi_{0})^{n+1},
\]
 where $F_{\psi_{0}}(\mu)$ is the free energy functional \ref{eq:def of fri F}.
The proof is thus concluded by invoking the identity \ref{eq:inf is inf given reference}
and using Lemma \ref{lem:change of metric} again.
\end{proof}
\begin{rem}
When $-K_{X}$ admits a Kähler-Einstein metric $\phi_{KE}$ both infima
in the previous proposition are attained at $\phi_{KE}$ \cite{bbegz}.
The identity then follows directly from the Kähler-Einstein equation,
giving $MA(\phi_{KE})=e^{-\phi_{KE}},$ when $\phi_{KE}$ is volume-normalized. 
\end{rem}

In Section \ref{subsec:Outlook-on-a} the inequality in the previous
proposition will be generalized to any model $(\mathcal{X},\mathcal{L})$
of $(X,-K_{X}),$ by introducing an arithmetic Ding functional $\mathcal{D}_{(\mathcal{X},\mathcal{L})}$,
coinciding (up to normalization) with the functional $\mathcal{\hat{D}}_{\Z}$
under the conditions in the previous proposition.

\section{\label{sec:Comparison-with-the}Discussion and outlook}

\subsection{The function field analog}

Recall that, according to the philosophy of Arakelov geometry, the
function field analog of a metrized arithmetic variety $\mathcal{X}\rightarrow\text{Spec \ensuremath{\Z}}$
is a flat projective morphism
\[
\mathcal{\mathscr{X}}\rightarrow\mathcal{\mathscr{B}}
\]
from a normal complex projective variety $\mathcal{\mathscr{X}}$
to a fixed regular complex projective curve $\mathcal{\mathscr{B}}.$
In particular, the analog of the setup of arithmetic Fano varieties
in Conjecture \ref{conj:height intro} appears when $\mathcal{\mathscr{X}}$
is normal, the relative anti-canonical divisor $-\mathcal{\mathscr{K}}_{\mathscr{X}/\mathscr{B}}$
defines a relatively ample $\Q-$line bundle and the generic fiber
is K-semistable. The analog of the inequality in Conjecture \ref{conj:height intro}
does hold in this situation, but not the uniqueness statement. More
precisely, if $(X,-K_{X})$ is assumed K-semistable then it follows
from \cite{c-p} (see the beginning of \cite[Section 1.7.1]{c-p})
that
\begin{equation}
(-\mathcal{\mathscr{K}}_{\mathscr{X}/\mathscr{B}})^{n+1}\leq0.\label{eq:function field ineq}
\end{equation}
 Equality holds for the trivial fibrations $\mathcal{\mathscr{X}}=X\times\mathcal{\mathscr{B}}$
for any K-semistable $X.$ In particular, 
\begin{equation}
(-\mathcal{\mathscr{K}}_{\mathscr{X}/\mathscr{B}})^{n+1}\leq(-\mathcal{\mathscr{K}}_{\P^{n}\times\mathcal{\mathscr{B}}/\mathscr{B}})^{n+1}(=0)\label{eq:funct field anal of conj}
\end{equation}
 which is the function field analog of the inequality in Conjecture
\ref{conj:height intro}. Note that when $\mathcal{\mathscr{B}}=\P^{1}$
and the standard $\C^{*}-$action on $\P^{1}$ lifts to $\mathcal{\mathscr{X}},$
the inequality \ref{eq:function field ineq} follows directly from
the definition of K-semistability.
\begin{rem}
The analog of the volume-normalization (appearing in Conjecture \ref{conj:height intro})
is automatically satisfied in the function field case. Indeed, the
second term in the corresponding Ding functional $\mathcal{D}_{(\mathcal{X}_{\mathscr{X}/\mathscr{B}},-\mathcal{\mathscr{K}}_{\mathscr{X}/\mathscr{B}})},$
discussed in the following section, then vanishes.
\end{rem}

In contrast to Conjecture \ref{conj:height intro} projective space
thus plays no special role in the function field case (since equality
holds in the inequality \ref{eq:funct field anal of conj} for \emph{any}
product $\mathcal{\mathscr{X}}=X\times\mathcal{\mathscr{B}}).$ Conversely,
it should be stressed that the analog of the inequality \ref{eq:function field ineq}
\emph{fails} in the arithmetic situation (by the strict positivity
in Lemma \ref{lem:arithm vol of proj etc}). Hence, the function field
analogy is somewhat deceptive. Our general motivation for Conjecture
\ref{conj:height intro} is rather the analogy with the corresponding
result over $\C$ (corresponding to the trivial morphism $X\rightarrow\text{Spec \ensuremath{\C})}$
and the fact that projective space maximizes the degree of $-K_{X}$
\cite{fu}, among K-semistable $X$ of a given dimension (as well
as a range of other positivity properties of $-K_{X}$; see, for example,
the discussion and references in the introduction of \cite{l-z}). 

\subsection{\label{subsec:Outlook-on-a}A generalization of Conjecture \ref{conj:height intro}}

Consider a Fano variety $X_{F}$ defined over a number field $F,$
i.e. a field extension $F$ of $\Q$ of finite degree $[F:\Q].$ Let
$(\mathcal{X},\mathcal{L})$ be a normal polarized model of $(X_{F},-K_{X_{F}})$
over the ring of integers $\mathcal{O}_{F}$ of $F$ such that $\mathcal{K}_{\mathcal{X}/\text{Spec\ensuremath{\mathcal{O}_{F}}}}$
is defined as a $\Q-$line bundle. We will denote by $\psi$ a collection
of continuous psh $\psi_{\sigma}$ metrics on $-K_{X_{\sigma}},$
where $\sigma$ ranges over all embeddings of the field $F$ into
$\C$ and $X_{\sigma}$ denotes the corresponding complex projective
varieties. To the model $(\mathcal{X},\mathcal{L})$ we attach an
\emph{arithmetic Ding functional,} defined as follows. First consider
a model $(\mathcal{X},\mathcal{L})$ of $(X_{F},-K_{X_{F}})$ such
that $\mathcal{L}+\mathcal{K}_{\mathcal{X}/\text{Spec\ensuremath{\mathcal{O}_{F}}}}$
defines a bona fide line bundle. Then 
\[
\mathcal{D}_{(\mathcal{X},\mathcal{L})}:=\frac{[F:\Q](-K_{X})^{n}}{n!}\mathcal{\hat{D}}_{(\mathcal{X},\mathcal{L})}(\psi),
\]
 where $\mathcal{\hat{D}}_{(\mathcal{X},\mathcal{L})}(\psi)$ is the
\emph{normalized arithmetic Ding functional} defined by
\[
\mathcal{\hat{D}}_{(\mathcal{X},\mathcal{L})}(\psi):=-\frac{(\mathcal{L},\psi)^{n+1}}{[F:\Q](n+1)(-K_{X})^{n}}+\frac{1}{[F:\Q]}\widehat{\deg}\pi_{*}(\mathcal{L}+\mathcal{K}_{\mathcal{X}/\text{Spec\ensuremath{\mathcal{O}_{F}}}}),
\]
where the second term above denotes the arithmetic (Arakelov) degree
of the line bundle $\pi_{*}(\mathcal{L}+\mathcal{K}_{\mathcal{X}/\text{Spec\ensuremath{\mathcal{O}_{F}}}})\rightarrow\text{Spec}\mathcal{O}_{F},$
endowed with the $L^{2}-$metric induced by the metric $\psi$ on
$\mathcal{L}$ (i.e. on $-K_{X}).$ More generally, when $\mathcal{K}_{\mathcal{X}/\text{Spec\ensuremath{\mathcal{O}_{F}}}}$
is merely defined as a $\Q-$line bundle we fix a positive integer
$r$ such that $r(\mathcal{L}+\mathcal{K}_{\mathcal{X}/\text{Spec\ensuremath{\mathcal{O}_{F}}}})$
is defined as a line bundle and replace $\widehat{\deg}\pi_{*}(\mathcal{X},(\mathcal{L}+\mathcal{K}_{\mathcal{X}/\text{Spec\ensuremath{\mathcal{O}_{F}}}})$
with $r^{-1}\widehat{\deg}\pi_{*}(\mathcal{X},(r(\mathcal{L}+\mathcal{K}_{\mathcal{X}/\text{Spec\ensuremath{\mathcal{O}_{F}}}})),$
where now $\pi_{*}(r(\mathcal{L}+\mathcal{K}_{\mathcal{X}/\text{Spec\ensuremath{\mathcal{O}_{F}}}}))$
is endowed with the $L^{2/r}-$metric induced by $\psi.$ Concretely,
given a rational global section $s_{r}$ of $\pi_{*}(r(\mathcal{L}+\mathcal{K}_{\mathcal{X}/\text{Spec\ensuremath{\mathcal{O}_{F}}}})),$
one may express
\begin{equation}
\widehat{\deg}\pi_{*}(r(\mathcal{L}+\mathcal{K}_{\mathcal{X}/\text{Spec\ensuremath{\mathcal{O}_{F}}}}))=-\frac{1}{2}\sum_{\sigma}\log\int_{X_{\sigma}}|s_{r}|^{2/r}e^{-\psi_{\sigma}}+\sum_{\mathfrak{p}}\text{ord}_{\mathfrak{p}}(s_{r})\log|\mathfrak{p}|,\label{eq:formula for arithm degree of direct}
\end{equation}
where $|s_{r}|^{2/r}e^{-\psi_{\sigma}}$ denotes corresponding measure
on $X_{\sigma},$ $\text{ord}_{\mathfrak{p}}(s)$ denotes the order
of vanishing of $s_{r}$ at the closed point $\mathfrak{p}$ in $\text{Spec}\mathcal{O}_{F}$
and $|\mathfrak{p}|$ donotes the norm of the prime ideal in\emph{
$\mathcal{O}_{F}$ }defined by $\mathfrak{p}$ (i.e., the cardinality
of the corresponding residue field $\mathcal{O}_{F}/\mathfrak{p}$
). The functional $\mathcal{\hat{D}}_{(\mathcal{X},\mathcal{L})}$
thus coincides with the functional $\mathcal{\hat{D}}_{\Z},$ defined
in formula \ref{eq:def of Ding Z}, up to an additive constant and
a factor of two. Note that when $F=\Q$ and $\mathcal{L}=-\mathcal{K}_{\mathcal{X}}$
we have $2\widehat{\deg}\pi_{*}(\mathcal{L}+\mathcal{K}_{\mathcal{X}/\text{Spec\ensuremath{\mathcal{O}_{F}}}})=-\log\int_{X}e^{-\phi}.$
Indeed, in this we can take $s_{r}=1\in H^{0}(\mathcal{X},\mathcal{O}_{\mathcal{X}}),$
which is globally non-vanishing, by Lemma \ref{lem:Stein}.
\begin{rem}
The functional $\mathcal{D}_{(\mathcal{X},\mathcal{L})}(\psi)$ is
the arithmetic analog of the degree of the Ding line bundle of a test
configuration $(\mathscr{X},\mathscr{L})$ for $(X,-K_{X})$ introduced
in \cite{ber0}. As shown in \cite{fu2} a Fano variety $X$ is K-semistable
iff the degree of the Ding line bundle is non-negative for any test
configuration $(\mathscr{X},\mathscr{L}).$
\end{rem}

Now consider the following invariant of the Fano variety $X_{F}:$
\[
\mathcal{D}(X_{F}):=\inf\left([F:\Q]^{-1}\mathcal{D}_{(\mathcal{X},\mathcal{L})}\right),
\]
 where the inf runs over all integral models $(\mathcal{X},\mathcal{L})$
of $(X,-K_{X})$ and metrics $\psi$ as above. We propose the following
generalization of Conjecture \ref{conj:height intro}:
\begin{conjecture}
\label{conj:arithm Ding}Let $X_{F}$ be a K-semistable Fano variety
defined over a number field $F.$ Then 
\[
\mathcal{D}(X_{F})\geq\mathcal{D}_{(\P_{\Z}^{n},-\mathcal{K}_{\P_{\Z}^{n}})}(\psi_{FS}),
\]
 where $\psi_{FS}$ denotes the volume-normalized Fubini-Study metric
$\psi_{FS}$ on $-K_{\P^{n}}.$ Equivalently, for any model $(\mathcal{X},\mathcal{L})$
and continuous psh metric $\psi$, normalized so that $\widehat{\deg}\pi_{*}(\mathcal{L}+\mathcal{K}_{\mathcal{X}/\text{Spec\ensuremath{\mathcal{O}_{F}}}})=0,$
\[
\frac{1}{[F:\Q]}(\mathcal{L},\psi)^{n+1}\leq(-\mathcal{K}_{\P_{\Z}^{n}},\psi_{FS})^{n+1}.
\]
Moreover, equality holds if and only if $(\mathcal{X},\mathcal{L})$
is isomorphic to $(\P_{\mathcal{O}_{F}}^{n},-\mathcal{K}_{\P_{\mathcal{O}_{F}}^{n}}+\pi^{*}M)$
for some line bundle $M\rightarrow\text{Spec \ensuremath{\mathcal{O}_{F}}}$
and $\psi$ coincides with $\psi_{FS},$ up to the action of an automorphism
of $\P^{n}$. 
\end{conjecture}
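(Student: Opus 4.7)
The plan is to reduce this conjecture to Conjecture \ref{conj:height intro} via a two-stage minimization: first over the metric $\psi$ for a fixed model $(\mathcal{X},\mathcal{L})$, then over models. For the metric minimization, the key observation is that $\mathcal{\hat{D}}_{(\mathcal{X},\mathcal{L})}(\psi)$ is, up to an additive constant depending on the model, essentially a sum over archimedean places $\sigma\colon F\hookrightarrow\C$ of Ding-type functionals of the form \ref{eq:def of Ding Z} at $X_\sigma$, divided by $[F:\Q]$. Indeed, rewriting the arithmetic degree using the formula \ref{eq:formula for arithm degree of direct} and applying the change-of-metric formula (Lemma \ref{lem:change of metric}) at each place, the dependence on $\psi$ separates into complex-analytic pieces. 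Since K-semistability of $X_F$ implies K-semistability of every $X_\sigma$, Theorem \ref{thm:arithm Vol and K semi st} yields that each piece is bounded from below, attaining its infimum at a K\"ahler--Einstein metric when $X_\sigma$ is K-polystable.

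For the minimization over models, one would invoke Odaka's minimization conjecture (settled under hypotheses close to global K-semistability in \cite{h-o}; compare also the function field analog \cite{b-x}): the arithmetic Ding functional should be minimized by a globally K-semistable integral model $(\mathcal{X}_0,-\mathcal{K}_{\mathcal{X}_0})$, provided such a model exists. Granted this, the problem reduces to verifying the height inequality for $(\mathcal{X}_0,-\mathcal{K}_{\mathcal{X}_0})$ endowed with the volume-normalized K\"ahler--Einstein metric at each archimedean place, which is precisely the content of Conjecture \ref{conj:height intro} (base-changed to $\mathcal{O}_F$, with an extra factor of $[F:\Q]$ absorbed by the normalization). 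The main obstacle is that Conjecture \ref{conj:height intro} itself is only known in the toric case (Theorem \ref{thm:main toric intro}) and, via \cite{a-b}, for diagonal Fano hypersurfaces; outside these settings the height inequality is genuinely open. An additional obstruction is the existence of a globally K-semistable model: not every $X_F$ need admit one, so one may have to work with infimizing sequences of models and control how the non-archimedean contributions to $\mathcal{\hat{D}}_{(\mathcal{X},\mathcal{L})}$ interact with the archimedean ones in the limit.

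For the equality characterization, I would argue that equality in the conjecture forces equality in each archimedean term, which by Lemma \ref{lem:product is maximal implies thm} (and its expected non-toric extensions via the gap hypothesis of Section \ref{subsec:gap}) forces $X_\sigma\simeq\P^n_\C$ with the Fubini--Study metric for every embedding $\sigma$. Standard descent then yields $X_F\simeq\P^n_F$, and the rigidity of normal Fano integral models of $\P^n_F$ implies that two such models differ only by a twist $\pi^*M$ for some line bundle $M\to\mathrm{Spec}\,\mathcal{O}_F$, as claimed. Overall, the conditional structure of the argument is clear, but unconditionally the proposal only delivers Conjecture \ref{conj:arithm Ding} for $X_F$ that are toric (satisfying the hypotheses of Theorem \ref{thm:main toric intro}) or diagonal hypersurfaces, and that admit a globally K-semistable integral model over $\mathcal{O}_F$.
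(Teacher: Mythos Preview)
The statement you are attempting to prove is a \emph{conjecture} in the paper, not a theorem; the paper does not provide a proof. After stating Conjecture~\ref{conj:arithm Ding}, the paper only records the invariance $\mathcal{D}_{(\mathcal{X},\mathcal{L})}=\mathcal{D}_{(\mathcal{X},\mathcal{L}+\pi^*M)}$ and expresses the \emph{expectation}, inspired by Odaka's minimization conjecture, that globally K-semistable models realize the infimum defining $\mathcal{D}(X_F)$. There is therefore no proof in the paper to compare your proposal against.

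That said, your strategy is precisely in line with the paper's own heuristic discussion: reduce the conjecture to Conjecture~\ref{conj:height intro} via (i) separating the archimedean dependence of $\mathcal{\hat D}_{(\mathcal{X},\mathcal{L})}$ place by place and (ii) invoking a minimization over models at a globally K-semistable one. You correctly flag the genuine obstructions: Conjecture~\ref{conj:height intro} is open outside the toric (Theorem~\ref{thm:main toric intro}) and diagonal-hypersurface cases, and a globally K-semistable model need not exist. Your equality argument is more speculative than you indicate: the step ``$X_\sigma\simeq\P^n_\C$ for all $\sigma$ implies $X_F\simeq\P^n_F$'' requires care (it is not automatic descent), and the assertion that normal Fano integral models of $\P^n_F$ differ only by a twist $\pi^*M$ is not established in the paper. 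In sum, your proposal is a reasonable conditional outline consistent with what the paper itself suggests, but it is not a proof, and the paper does not claim one.
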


Note that, in general, $\mathcal{D}_{(\mathcal{X},\mathcal{L})}(\psi)=\mathcal{D}_{(\mathcal{X},\mathcal{L}+\pi^{*}M)}(\psi)$
for any line bundle $M\rightarrow\text{Spec \ensuremath{\mathcal{O}_{F}.} }$
We expect - inspired by Odaka's conjecture discussed in Section \ref{subsec:The-arithmetic-K-energy}
- that any integral model $(\mathcal{X},\mathcal{L})$ which is globally
K-semistable realizes the infimum defining the invariant $\mathcal{D}(X_{F}).$

Next, given a polarized scheme $(\mathcal{X},\mathcal{L})$ over a
number field $F,$ we will, as in the case $F=\Q,$ denote by $\mathcal{M}_{(\mathcal{X},\mathcal{L})}(\psi)$
the arithmetic Mabuchi funtional defined by the intersection-theoretic
expression in formula \ref{eq:def of arithm Mab}. In general, the
following inequality between the arithmetic Mabuchi functional and
the arithmetic Ding functional holds, showing, in particular, that
Conjecture \ref{conj:arithm Ding} implies Conjecture \ref{conj:min of Odaka for Fano}
concerning Odaka's modular invariant. The inequality can be viewed
as an arithmetic analogy of the inequality for test configurations
in \cite[Lemma 3.10]{ber0}.
\begin{prop}
\label{prop:Mab greater than Ding}If $(\mathcal{X},\mathcal{L})$
is a normal polarized model of $(X,-K_{X})$ over $\text{Spec \ensuremath{\mathcal{O}_{F}} }$
which is $\Q-$Gorenstein, then
\[
\mathcal{M}_{(\mathcal{X},\mathcal{L})}(\psi)\geq\mathcal{D}_{(\mathcal{X},\mathcal{L})}(\psi)
\]
 with equality iff $\psi$ is a Kähler-Einstein metric and $\mathcal{L}$
is isomorphic to $-\mathcal{K}_{\mathcal{X}/\text{Spec\ensuremath{\mathcal{O}_{F}}}}\otimes\pi^{*}M$
for some line bundle $M$ over $\text{Spec}\ensuremath{\mathcal{O}_{F}.}$
\end{prop}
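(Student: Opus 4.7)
The plan is to generalize the argument of Proposition \ref{prop:inf arithm Mab vs Ding}, which establishes the identity (hence the inequality) in the special case $\mathcal{L}=-\mathcal{K}_{\mathcal{X}/\mathrm{Spec}\,\mathcal{O}_F}$, to an arbitrary polarized model. The key object is the $\Q$-line bundle $\mathcal{N}:=\mathcal{L}+\mathcal{K}_{\mathcal{X}/\mathrm{Spec}\,\mathcal{O}_F}$, whose restriction to each complex fiber $X_\sigma$ is trivial (since $L=-K_X$ there). Equip $\mathcal{N}$ at each archimedean place with the metric $\phi_{\mathcal{N}}$ obtained by combining $\psi$ with the metric on $\mathcal{K}$ induced by $MA(\psi)$; concretely, for any local section $s$ one has $|s|^{2/r}_{\phi_{\mathcal{N}}}\cdot MA(\psi)=|s|^{2/r}e^{-\psi}$ as measures on $X_\sigma$. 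Because $a=n$ in the Fano case, we may rewrite the arithmetic Mabuchi functional exactly as in \eqref{eq:decomp of arithm Mab in Fano case},
\[
\mathcal{M}_{(\mathcal{X},\mathcal{L})}(\psi)=-\frac{(\mathcal{L},\psi)^{n+1}}{(n+1)!}+\frac{1}{n!}\,\overline{\mathcal{N}}\cdot\overline{\mathcal{L}}^{n},
\]
the only difference being that $\mathcal{N}$ is no longer trivial. The $\overline{\mathcal{L}}^{n+1}$ contributions in $\mathcal{M}$ and in $\mathcal{D}_{(\mathcal{X},\mathcal{L})}$ cancel, reducing the proposition to $\overline{\mathcal{N}}\cdot\overline{\mathcal{L}}^{n}\ge (-K_X)^n\,\widehat{\deg}\,\pi_*\mathcal{N}$.

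Next, pick $r\in\Z_+$ such that $r\mathcal{N}$ is a bona fide line bundle, and pick any nonzero rational section $s_r$ of $\pi_*(r\mathcal{N})$, viewed equivalently as a global rational section of $r\mathcal{N}$ on $\mathcal{X}$. The standard sectional expression for an arithmetic intersection, combined with the identity $-\log|s_r|^{2}_{r\phi_{\mathcal{N}}}=r\log\!\bigl(MA(\psi)/|s_r|^{2/r}e^{-\psi}\bigr)$, gives
\[
\frac{1}{r}\overline{r\mathcal{N}}\cdot\overline{\mathcal{L}}^{n}=\frac{1}{r}\overline{\mathrm{div}(s_r)}\cdot\overline{\mathcal{L}}^{n}+\frac{1}{2}\sum_\sigma\int_{X_\sigma}\log\frac{MA(\psi_\sigma)}{|s_r|^{2/r}e^{-\psi_\sigma}}(dd^c\psi_\sigma)^n,
\]
while formula \eqref{eq:formula for arithm degree of direct} expresses $\widehat{\deg}\,\pi_*(r\mathcal{N})$ through the same $s_r$. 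Since $\mathrm{div}(s_r)$ is a vertical divisor on $\mathcal{X}$, the arithmetic projection formula supplies the matching identity
\[
\overline{\mathrm{div}(s_r)}\cdot\overline{\mathcal{L}}^{n}=(-K_X)^n\sum_{\mathfrak{p}}\mathrm{ord}_\mathfrak{p}(s_r)\log|\mathfrak{p}|,
\]
so the non-archimedean parts cancel and the difference $\overline{\mathcal{N}}\cdot\overline{\mathcal{L}}^n-(-K_X)^n\,\widehat{\deg}\,\pi_*\mathcal{N}$ collapses to
\[
\frac{n!\,\mathrm{vol}(L)}{2}\sum_\sigma\left[\int_{X_\sigma}\log\frac{MA(\psi_\sigma)}{|s_r|^{2/r}e^{-\psi_\sigma}}\,MA(\psi_\sigma)+\log\int_{X_\sigma}|s_r|^{2/r}e^{-\psi_\sigma}\right].
\]

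Each bracket is nonnegative by Jensen's inequality applied to the probability measure $MA(\psi_\sigma)$ and the positive measure $|s_r|^{2/r}e^{-\psi_\sigma}$ (equivalently, by the nonnegativity of the relative entropy of $MA(\psi_\sigma)$ with respect to the renormalization of $|s_r|^{2/r}e^{-\psi_\sigma}$ to a probability measure), with equality iff $MA(\psi_\sigma)\propto|s_r|^{2/r}e^{-\psi_\sigma}$. This proves $\mathcal{M}\ge\mathcal{D}$. For the equality clause, archimedean saturation at each $\sigma$ forces $\psi_\sigma$ to solve the Kähler-Einstein equation $MA(\psi_\sigma)\propto e^{-\psi_\sigma}$, in which case $|s_r|^{2/r}$ must be constant on every $X_\sigma$; the existence of a rational section of $r\mathcal{N}$ with this property then forces $r\mathcal{N}$ to be non-vanishing in codimension one away from full closed fibers, equivalently $\mathcal{N}\cong\pi^*M$ for some line bundle $M$ on $\mathrm{Spec}\,\mathcal{O}_F$, which is precisely the claimed form of $\mathcal{L}$.

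The main obstacle I anticipate is verifying the arithmetic projection formula for $\overline{\mathrm{div}(s_r)}\cdot\overline{\mathcal{L}}^n$ with the precise normalization of $\widehat{\deg}\,\pi_*$ used in \eqref{eq:formula for arithm degree of direct}, in particular handling components of $\mathrm{div}(s_r)$ that are proper subvarieties of closed fibers rather than full pullbacks $\mathcal{X}_\mathfrak{p}$, together with the associated residue-field-degree factors inside the $\log|\mathfrak{p}|$. Given that bookkeeping, the archimedean step is a direct generalization of the single Jensen application already used in Proposition \ref{prop:inf arithm Mab vs Ding}.
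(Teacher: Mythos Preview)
Your archimedean step is exactly the paper's: rewrite $\mathcal{M}_{(\mathcal{X},\mathcal{L})}$ using $\overline{\mathcal{N}}\cdot\overline{\mathcal{L}}^n$ with $\mathcal{N}=\mathcal{L}+\mathcal{K}$, expand against a section $s_r$, and apply Jensen's inequality to compare the entropy term with $-\log\int|s_r|^{2/r}e^{-\psi}$. The gap is in the non-archimedean step, and it is precisely the ``obstacle'' you flag at the end---but it is not bookkeeping. The displayed identity
\[
\overline{\mathrm{div}(s_r)}\cdot\overline{\mathcal{L}}^{n}=(-K_X)^n\sum_{\mathfrak{p}}\mathrm{ord}_\mathfrak{p}(s_r)\log|\mathfrak{p}|
\]
is \emph{false} in general: the divisor of $s_r$ on $\mathcal{X}$ is not the pullback $\sum_\mathfrak{p}\mathrm{ord}_\mathfrak{p}(s_r)\,\mathcal{X}_\mathfrak{p}$ of its divisor on $\text{Spec}\,\mathcal{O}_F$ unless the adjunction map $\pi^*\pi_*(r\mathcal{N})\to r\mathcal{N}$ is an isomorphism, i.e.\ unless $\mathcal{N}\cong\pi^*M$---which is exactly the equality condition you are trying to characterize, not something you can assume. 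So the non-archimedean parts do \emph{not} cancel.

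What the paper does is first twist by $\pi^*M$ so that one may choose $s$ \emph{regular}; then on $\mathcal{X}$ one has $\mathrm{div}(s)=\sum_\mathfrak{p}\bigl(\mathrm{ord}_\mathfrak{p}(s)\,\mathcal{X}_\mathfrak{p}+E'_\mathfrak{p}\bigr)$ with each $E'_\mathfrak{p}$ \emph{effective}, and relative ampleness of $\mathcal{L}$ gives $\mathcal{L}^n\cdot E'_\mathfrak{p}\geq 0$. This is a second, genuinely non-archimedean, contribution to the inequality $\mathcal{M}\geq\mathcal{D}$, on top of Jensen. Correspondingly, the equality clause has two parts: Jensen saturates iff $\psi$ is K\"ahler--Einstein, and the vertical term vanishes iff every $E'_\mathfrak{p}=0$, i.e.\ iff $\mathcal{L}+\mathcal{K}\cong\pi^*M$. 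Your attempt to deduce $\mathcal{N}\cong\pi^*M$ from the archimedean saturation alone cannot work, since the archimedean condition says nothing about the shape of the vertical divisor.
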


\begin{proof}
To simplify the notation we assume that $r=1$ (but the proof in the
general case is essentially the same). It follows directly from the
definitions that we need to prove that
\begin{equation}
\frac{1}{L^{n}}(\overline{\mathcal{L}}+\overline{\mathcal{K}})\cdot\overline{\mathcal{L}}^{n}-\widehat{\deg}\pi_{*}(\mathcal{L}+\mathcal{K}_{\mathcal{X}/\text{Spec\ensuremath{\mathcal{O}_{F}}}})\geq0\label{eq:pf Mab greather than Ding}
\end{equation}
 with equality iff the conditions in the proposition hold. First observe
that the left hand side above is invariant when $\mathcal{L}$ is
replaced by $\mathcal{L}+\pi^{*}M,$ where $M$ is any line bundle
over $\text{Spec \ensuremath{\mathcal{O}_{F}.} }$Hence, we may as
well assume that $\pi_{*}(\mathcal{L}+\mathcal{K}_{\mathcal{X}/\text{Spec\ensuremath{\mathcal{O}_{F}}}})$
admits a global regular section $s$ that is non-vanishing over the
generic fiber. Now, by the restriction formula for arithmetic intersection
numbers \cite[Prop 2.3.1]{b-g-s},
\begin{equation}
\frac{1}{L^{n}}(\overline{\mathcal{L}}+\overline{\mathcal{K}})\cdot\overline{\mathcal{L}}^{n}=\frac{1}{2}\int_{X(\C)}\log(\frac{MA(\psi)}{|s|^{2}e^{-\psi}})MA(\psi)+\frac{1}{L^{n}}(s=0)\cdot\overline{\mathcal{L}}^{n},\label{eq:restr formula}
\end{equation}
where $(s=0)$ denotes the subscheme of $\mathcal{X}$ cut out by
$s.$ By Jensen's inequality,
\begin{equation}
\int_{X(\C)}\log(\frac{MA(\psi)}{|s|^{2}e^{-\psi}})MA(\psi)\geq-\frac{1}{2}\sum_{\sigma}\log\int_{X_{\sigma}}|s|^{2}e^{-\psi}=\widehat{\deg}\pi_{*}(\mathcal{L}+\mathcal{K}_{\mathcal{X}/\text{Spec\ensuremath{\mathcal{O}_{F}}}})-\sum_{\mathfrak{p}}\text{ord}_{\mathfrak{p}}(s)\log|\mathfrak{p}|.\label{eq:entropy term}
\end{equation}
 Hence, decomposing the subscheme $(s=0)$ of $\mathcal{X}$ as a
sum of effective divisors $E_{\mathfrak{p}},$ where $E_{\mathfrak{p}}$
is supported on the fiber $\mathcal{X}_{\mathfrak{p}}$ of $\mathcal{X}$
over $\mathfrak{p},$ 
\[
\frac{1}{L^{n}}(\overline{\mathcal{L}}+\overline{\mathcal{K}})\cdot\overline{\mathcal{L}}^{n}\geq\frac{1}{L^{n}}(s=0)\cdot\overline{\mathcal{L}}^{n}-\sum_{\mathfrak{p}}\text{ord}_{\mathfrak{p}}(s)\log|\mathfrak{p}|=\left(\frac{1}{L^{n}}\mathcal{L}_{|\mathcal{X}_{\mathfrak{p}}}^{n}\cdot E_{\mathfrak{p}}-\sum_{\mathfrak{p}}\text{ord}_{\mathfrak{p}}(s)\right)\log|\mathfrak{p}|,
\]
 using, again, the restriction formula in the last equality. Since
$\text{ord}_{\mathfrak{p}}(s)\geq0,$ we can express $E_{\mathfrak{p}}=E'_{\mathfrak{p}}+\text{ord}_{\mathfrak{p}}(s)\mathcal{X}_{\mathfrak{p}}$
for an effective divisor $E'_{\mathfrak{p}},$ giving
\[
\frac{1}{L^{n}}(\overline{\mathcal{L}}+\overline{\mathcal{K}})\cdot\overline{\mathcal{L}}^{n}\geq\left(\frac{1}{L^{n}}\mathcal{L}_{|\mathcal{X}_{\mathfrak{p}}}^{n}\cdot E'_{\mathfrak{p}}\right)\log|\mathfrak{p}|\geq0.
\]
 Finally, equality holds in the inequality \ref{eq:entropy term}
iff $MA(\psi)$ is proportional to $|s|^{2}e^{-\psi}$ for all $X_{\sigma},$
i.e. iff $\psi$ is Kähler-Einstein. Moreover, since $\mathcal{L}$
is relatively ample the right hand side in the last inequality above
vanishes iff $E'_{\mathfrak{p}}$ is the zero-divisor for all $\mathfrak{p}$
i.e. iff ($s=0)$ is a linear combination of fibers $\mathcal{X}_{\mathfrak{p}}$
and thus $\mathcal{L}+\mathcal{K}$ is isomorphic to $\pi^{*}M$ for
some line bundle $M$ over $\text{Spec}\ensuremath{\mathcal{O}_{F}.}$
\end{proof}

\subsection{Comparison with bounds on Bost-Zhang's normalized heights}

The normalized arithmetic Ding functional $\mathcal{\hat{D}}_{(\mathcal{X},\mathcal{L})}$
is reminiscent of Bost's normalized height $h_{\text{norm }},$ introduced
in \cite{bo2} in the general setup of polarized variety $(X_{F},L_{F})$
defined over a number field $F:$ 
\[
h_{\text{norm }}(\mathcal{L},\psi):=\frac{(\mathcal{L},\psi)^{n+1}}{[F:\Q](n+1)(L_{F})^{n}}-\frac{1}{[F:\Q]N}\widehat{\deg}\pi_{*}\mathcal{X},
\]
 assuming that the rank $N$ of the vector bundle $\pi_{*}\mathcal{L}\rightarrow\text{Spec}\mathcal{O}_{F}$
is non-zero and $\pi_{*}(\mathcal{X},\mathcal{L})$ is endowed with
the $L^{2}-$norm induced by the continuous psh metrics $\psi_{\sigma}$
on $L_{\sigma}$ and the volume forms $MA(\psi_{\sigma})$ on $X_{\sigma}$
(defined by formula \ref{eq:def of MA}). When $L_{F}$ is very ample
it is shown in \cite{bo2} that the functional $h_{\text{norm }}(\mathcal{L},\cdot)$
is bounded from below iff the Chow point of $(X_{F},L_{F})$ is semistable
wrt the action of the group $GL(N,F)$ on the Chow variety (in the
sense of Geometric Invariant Theory). More precisely, it it shown
in \cite{bo2} that the semi-stability in question is equivalent to
a lower bound on Bost's intrinsic normalized height of $(X_{F},L_{F}):$
\[
\inf h_{\text{norm }}>-\infty
\]
where the infimum runs over all models $(\mathcal{X},\mathcal{L})$
and metrics $\psi$ as above. In fact, by \cite[Prop 2.1]{bo2} and
\cite[Thm 4.4]{zh1} the Chow-semistability in question is equivalent
to the following explicit lower bound: 
\begin{equation}
h_{\text{norm }}(\mathcal{L},\psi)\geq-\frac{1}{2}\sum_{n=1}^{N+1}\sum_{m=1}^{n}\frac{1}{m}-\frac{1}{2}\log N\label{eq:Zhangs lower bound}
\end{equation}
(it is moreover conjectured in \cite{zh1} that the first term in
the right hand side above may be replaced by $0$$).$ 

In this setup the role of the normalization $\widehat{\deg}\pi_{*}(\mathcal{L}+\mathcal{K}_{\mathcal{X}/\text{Spec\ensuremath{\mathcal{O}_{F}}}})=0$
in Conjecture \ref{conj:arithm Ding} is thus played by the normalization
$\widehat{\deg}\pi_{*}\mathcal{L}=0.$ However, in contrast to Conjecture
\ref{conj:arithm Ding} the lower bound \ref{eq:Zhangs lower bound}
on $h_{\text{norm }}(\mathcal{L},\psi)$ corresponds to a \emph{lower}
bound on $(\mathcal{L},\psi)^{n+1}$ for any normalized metric. Note
also that one virtue of the normalization condition in Conjecture
\ref{conj:arithm Ding} is that it is comparatively explicit, since
$\pi_{*}(\mathcal{L}+\mathcal{K}_{\mathcal{X}/\text{Spec\ensuremath{\mathcal{O}_{F}}}})$
has rank one (so that formula \ref{eq:formula for arithm degree of direct}
applies, showing that it is enough to assume that the volume forms
$|s_{r}|^{2/r}e^{-\psi_{\sigma}}$ on $X_{\sigma}$ are normalized).
Another advantage of this normalization condition is that it applies
to any continuous metric $\psi$ (at the price of replacing $(\mathcal{L},\psi)^{n+1}$
with the $\chi-$arithmetic volume of $\mathcal{L},$ as in Theorem
\ref{thm:arithm Vol and K semi st}).

Finally, we recall that when $\mathcal{L}$ is replaced by $k\mathcal{L}$
for a large positive integer $k$ it follows from \cite[Thm 3.7]{o}
that there exists constants $a$ and $b$ (depending only on $(X_{F},L_{F})$)
such that $a>0$ 
\begin{equation}
\mathcal{M}_{(\mathcal{X},\mathcal{L})}(\psi)/L^{n}=h_{\text{norm }}(k\mathcal{L},\psi)-a\log N_{k}+b+o(1),\label{eq:Od asym}
\end{equation}
as $k\rightarrow\infty,$ where $N_{k}$ denotes the rank of $H^{0}(\mathcal{X},k\mathcal{L})$
which diverges as $k\rightarrow\infty.$ Unfortunately, the diverging
term $a\log N_{k}$ makes it impossible to infer lower bounds on $\mathcal{M}_{(\mathcal{X},\mathcal{L})}(\psi)$
from lower bounds on $h_{\text{norm }}(k\mathcal{L}).$ Since $\mathcal{M}_{(\mathcal{X},\mathcal{L})}(\psi)$
coincides with $\mathcal{D}_{(\mathcal{X},\mathcal{L})}(\psi)$ when
$\mathcal{L}$ equals $-\mathcal{K}_{\mathcal{X}/\text{Spec\ensuremath{\mathcal{O}_{F}}}}$this
means that Conjecture \ref{conj:arithm Ding} can not be deduced from
bounds of the form \ref{eq:Zhangs lower bound} by letting $k$ (and
hence $N)$ tend to infinity. 

\subsection{\label{subsec:Comparison-with-Odaka's Falting}Comparison with Odaka's
and Faltings' modular heights }

Finally, let us compare our normalizations of the arithmetic Mabuchi
functional with those of Odaka \cite{od2} and Faltings \cite{fa2}.
First of all our multiplicative normalization for the arithmetic Mabuchi
functional $\mathcal{M}_{(\mathcal{X},\mathcal{L})}$ (formula \ref{eq:def of arithm Mab intro})
are made so that $\pm\mathcal{M}_{(\mathcal{X},\pm K_{\mathcal{X}})}=(\pm\mathcal{K}_{\mathcal{X}})^{n+1}/(n+1).$
Moreover, as discussed in Section \ref{subsec:Odaka's-modular-height},
we are employing the metric on $-K_{X}$ induced by the \emph{normalized}
volume form $\omega^{n}/L^{n}$ of the Kähler form $\omega$ defined
by a given metric $\psi$ on $\mathcal{L}$ with positive curvature
(i.e. $\omega=dd^{c}\psi).$ Comparing with Odaka's arithmetic Mabuchi
functional, that we shall denote by $\mathcal{M}_{(\mathcal{X},\mathcal{L})}^{(O)}(\psi),$
thus yields

\begin{equation}
\frac{1}{(n+1)!L^{n}}\mathcal{M}_{(\mathcal{X},\mathcal{L})}^{(O)}=\mathcal{M}_{(\mathcal{X},\mathcal{L})}+\frac{1}{2}\frac{L^{n}}{n!}\log(L^{n}/n!).\label{eq:Mab O in terms of Mab}
\end{equation}
 In the case that $\mathcal{X}$ is an abelian variety it was shown
in \cite{od2} that the infimum of Odaka's arithmetic Mabuchi functional
over all metrics on $\mathcal{L}$ with positive curvature coincides
with Faltings' (modular) height \cite{fa2}, up to a multiplicative
and an additive constant depending on $L^{n}.$ Here we note that
our normalizations are consistent with those of Faltings: 
\begin{prop}
Let $\mathcal{X}$ be a projective and flat scheme over $\Z$ and
assume that $\mathcal{K_{\mathcal{X}}}$ is trivial. For any relatively
ample line bundle $\mathcal{L}$ over \emph{$\mathcal{X}$}
\begin{equation}
\inf_{\psi}\frac{1}{L^{n}/n!}\mathcal{M}_{(\mathcal{X},\mathcal{L})}(\psi)=-\frac{1}{2[\F:\Q]}\log\frac{1}{2^{n}}\left|\int_{X(\C)}\Omega\wedge\bar{\Omega}\right|,\label{eq:inf Mab is Falt}
\end{equation}
 where $\Omega$ is a generator of $H^{0}(\mathcal{X},\mathcal{K}_{\mathcal{X}})$
and the inf ranges over all psh metrics $\psi$ on $\mathcal{L}$
and $V:=L^{n}/n!.$ 
\end{prop}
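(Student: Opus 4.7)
The plan is to reduce the infimum to a Jensen-type relative-entropy estimate and close by Calabi--Yau. Since $\mathcal{K}_{\mathcal{X}}$ is trivial, the complex fiber $X$ has trivial canonical bundle; in particular $K_{X}\cdot L^{n-1}=0$, so the coefficient $a$ in \eqref{eq:def of arithm Mab} vanishes and
\[
\mathcal{M}_{(\mathcal{X},\mathcal{L})}(\psi)=\tfrac{1}{n!}\,\overline{\mathcal{K}}_{\mathcal{X}}\cdot\overline{\mathcal{L}}^{n},
\]
where $\overline{\mathcal{K}}_{\mathcal{X}}$ carries the metric induced by $MA(\psi)$. I would then use $\Omega$ to trivialize $\mathcal{K}_{\mathcal{X}}\simeq\mathcal{O}_{\mathcal{X}}$ and invoke the trivial-bundle intersection formula \eqref{eq:arithm inters form for trivial}, taking $\phi_{0}:=-\log\|\Omega\|_{\mathcal{K}_{\mathcal{X}}}^{2}$ as a function on $X(\C)$, to get
\[
\overline{\mathcal{K}}_{\mathcal{X}}\cdot\overline{\mathcal{L}}^{n}=\tfrac{1}{2}\int_{X(\C)}\phi_{0}\,(dd^{c}\psi)^{n}.
\]

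The next step is to identify $\phi_{0}$ as an explicit log-density. The metric on $\mathcal{K}_{\mathcal{X}}$ is dual to the metric on $-K_{X}$ whose associated volume form is $MA(\psi)$, so writing $\Omega=f\,dz_{1}\wedge\cdots\wedge dz_{n}$ in local holomorphic coordinates and translating between the trivialization $\partial/\partial z_{1}\wedge\cdots\wedge\partial/\partial z_{n}$ of $-K_{X}$ and the dual of $\Omega$ yields
\[
MA(\psi)=e^{\phi_{0}}\,\nu_{\Omega},\qquad \nu_{\Omega}:=|f|^{2}\,dV_{\mathrm{Eucl}},
\]
the positive measure $\nu_{\Omega}$ being coordinate-invariant. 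A direct wedge-product calculation gives $|\nu_{\Omega}|:=\int_{X(\C)}\nu_{\Omega}=2^{-n}\,\bigl|\int_{X(\C)}\Omega\wedge\bar{\Omega}\bigr|$. Substituting $\phi_{0}=\log(MA(\psi)/\nu_{\Omega})$ together with $(dd^{c}\psi)^{n}=n!\,V\cdot MA(\psi)$ then produces
\[
\frac{\mathcal{M}_{(\mathcal{X},\mathcal{L})}(\psi)}{V}=\tfrac{1}{2}\,\text{Ent}_{\hat{\nu}_{\Omega}}\!\bigl(MA(\psi)\bigr)-\tfrac{1}{2}\log|\nu_{\Omega}|,
\]
where $\hat{\nu}_{\Omega}:=\nu_{\Omega}/|\nu_{\Omega}|$ is the normalized probability measure.

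Finally, Jensen's inequality (non-negativity of the relative entropy) gives $\mathcal{M}_{(\mathcal{X},\mathcal{L})}(\psi)/V\geq -\tfrac{1}{2}\log|\nu_{\Omega}|$, with equality iff $MA(\psi)=\hat{\nu}_{\Omega}$. A minimizer of this equation is provided by Yau's theorem when $X$ is smooth, and by Eyssidieux--Guedj--Zeriahi in the (Gorenstein) singular case, since $\hat{\nu}_{\Omega}$ is a probability measure with bounded positive density on $X_{\mathrm{reg}}$. Substituting the expression for $|\nu_{\Omega}|$ and noting that $[F:\Q]=1$ for $\mathcal{X}$ defined over $\Z$ then yields the stated identity. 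The main obstacle will be the careful bookkeeping of the correspondence between metrics on $\pm K_{X}$ and volume forms, so that the factor $2^{-n}$ emerges correctly from the wedge-product normalization; the remainder of the argument is a direct Jensen-plus-Calabi--Yau computation.
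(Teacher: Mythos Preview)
Your proof is correct and follows essentially the same route as the paper: use that $a=0$ (since $K_X$ is trivial) together with the trivial-bundle intersection formula to express $\mathcal{M}_{(\mathcal{X},\mathcal{L})}(\psi)/V$ as $\tfrac{1}{2}$ times the relative entropy of $MA(\psi)$ against the probability measure $\hat\nu_\Omega$, minus $\tfrac{1}{2}\log|\nu_\Omega|$; then apply Jensen for the lower bound and Calabi--Yau to produce a minimizer. The paper invokes the restriction formula \eqref{eq:restr formula} rather than \eqref{eq:arithm inters form for trivial} directly, but this amounts to the same computation, and your explicit tracking of the $2^{-n}$ factor via the wedge-product identity is exactly the bookkeeping the paper leaves implicit.
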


\begin{proof}
This is essentially equivalent to \cite[Thm 2.11]{od2}, using the
relation \ref{eq:Mab O in terms of Mab}. Anyhow, in order to verify
that all normalizations are consistent we provide a simple direct
proof. Assume, to simplify the notation, that $\F=\Q.$ Recall that
Faltings' modular height \cite{fa2} is defined as the arithmetic
degree of $\pi_{*}(\mathcal{X},K_{\mathcal{X}}),$ with respect to
the $L^{2}-$metric on $H^{0}(X,K_{X})$ defined by $\left\Vert \Omega\right\Vert ^{2}:=\frac{1}{2^{n}}\left|\int_{X(\C)}\Omega\wedge\bar{\Omega}\right|.$
This is precisely the right hand side in formula \ref{eq:inf Mab is Falt}.
As for the left hans side it is is given by 
\[
\int_{X}\log\left(\frac{(dd^{c}\psi)^{n}/Vn!}{\frac{i^{n^{2}/2}}{2^{n}}\Omega\wedge\bar{\Omega}/\left\Vert \Omega\right\Vert ^{2}}\right)\frac{(dd^{c}\psi)^{n}}{Vn!}=\int_{X}\log\left(\frac{(dd^{c}\psi)^{n}/Vn!}{\frac{i^{n^{2}/2}}{2^{n}}\Omega\wedge\bar{\Omega}/\left\Vert \Omega\right\Vert ^{2}}\right)\frac{(dd^{c}\psi)^{n}}{Vn!}-\log\left\Vert \Omega\right\Vert ^{2}.
\]
(as follows readily from the definitions, just as in formula \ref{eq:restr formula}).
Now, by Jensen's inequality this expression is minimal precisely when
the two probability measures $(dd^{c}\psi)^{n}/Vn!$ and $2^{-n}i^{n^{2}/2}\Omega\wedge\bar{\Omega}/\left\Vert \Omega\right\Vert ^{2}$
coincide, which, equivalently, means that $dd^{c}\psi$ is a Kähler-Einstein
metric. By the Calabi-Yau theorem such a metric exists for any given
ample $L,$ which concludes the proof.
\end{proof}
The previous proposition has the following consequence, when combined
with well-known properties of Faltings' modular height of abelian
varieties (cf. the discussion in relation to \cite[Thm 2.11]{od2}
and \cite[Section 2.3.2]{od2}). Consider a polarized abelian variety
$(X_{\F_{0}},L_{\F_{0}})$ defined over a given number field $\F_{0}.$
Then the infimum of $\text{vol}(L)^{-1}\mathcal{M}_{(\mathcal{X},\mathcal{L})}$
over all metrics, finite field extensions $\F,$ models over $\mathcal{O}_{\F}$
and positively curved metrics on $L\rightarrow X_{\F}(\C)$ is attained
at any semi-stable reduction of the Néron\emph{ }model $\mathcal{X}$
of $X_{\F},$ when $L$ is endowed with a Kähler-Einstein metric.
Moreover, in the particular case of elliptic curves it was observed
in \cite[page 29]{del} that the minimal value of the aforementioned
infimum over all $X_{\F}$ is attained at the semistable reduction
of the Néron model $\mathcal{X}_{0}$ of any elliptic curve with vanishing
$j-$invariant ($\mathcal{X}_{0}$ is uniquely determined for any
sufficently large field extension). Thus the role of $\mathcal{X}_{0}$
among all models of elliptic curves, is somewhat analogous to the
role of $\P_{\Z}^{n}$ in Conjectures \ref{conj:height intro}, \ref{conj:min of Odaka for Fano}.
However, it should be stressed that in the setup of Fano varieties
the choice of multiplicative normalization is crucial. Indeed, while
$\P_{\Z}^{n}$ minimizes $\mathcal{M}_{(\mathcal{X},-\mathcal{K}_{\mathcal{X}})}(\psi_{\text{KE}})$
over the canonical toric integral models of all K-semistable toric
Fano varieties $X$ (assuming that $n\leq6)$ it does\emph{ }not\emph{
}minimize $\text{vol}(-K_{X})^{-1}\mathcal{M}_{(\mathcal{X},-\mathcal{K}_{\mathcal{X}})}(\psi_{\text{KE}}).$
In fact, for all we know it could actually be the case that $\text{vol}(-K_{X})^{-1}\mathcal{M}_{(\mathcal{X},-\mathcal{K}_{\mathcal{X}})}(\psi_{\text{KE}})$
is\emph{ maximal} on $\P_{\Z}^{n}.$ For example, this turns out to
be the case in the more general setup of Fano orbifolds (not assumed
toric) when $X$ has relative dimension one (a proof will appear in
a separate publication).


\begin{thebibliography}{100}
\bibitem{a-b} R.Andreasson; R.J. Berman: Sharp bounds on the height
of K-semistable Fano varieties II, The log case. arXiv:2312.05064,
2023 - arxiv.org

\bibitem{ar}Araujo et al: The Calabi problem for Fano threefolds.
https://www.maths.ed.ac.uk/cheltsov/pdf/Fanos.pdf 

\bibitem{a-k-m}Artstein, S., Klartag, M., Milman, V.: The Santaló
point of a function and a functional form of Santaló inequality. Mathematika
51, 33--48 (2004)

\bibitem{bbgz}Berman, R.J; Boucksom, S; Guedj,V; Zeriahi: A variational
approach to complex Monge-Ampère equations. Publications math. de
l'IHÉS (2012): 1-67 , November 14, 2012

\bibitem{berm6}Berman, R.J: A thermodynamical formalism for Monge-Ampère
equations, Moser-Trudinger inequalities and Kähler-Einstein metrics\emph{.}
Advances in Math. 1254. Volume: 248. 2013

\bibitem{ber0}R. J. Berman: K-polystability of Q-Fano varieties admitting
Kähler-Einstein metrics. Invent. Math. 203 (2016), 973--1025

\bibitem{ber1}R.J. Berman: On K-stability, height bounds and the
Manin-Peyre conjecture. Preprint at https://arxiv.org/abs/2305.07272.

\bibitem{ber-ber}R. J. Berman and B. Berndtsson: Real Monge-Ampère
equations and Kähler-Ricci solitons on toric log Fano varieties. Ann.
Fac. Sci. Toulouse Math. (6) 22 (2013), 649--711

\bibitem{bbegz}R.J.Berman\emph{; }S. Boucksom; Eyssidieux, P: , V.
Guedj, A. Zeriahi: Kähler-Einstein metrics and the Kähler-Ricci flow
on log Fano varieties. J. reine angew. Math. 751 (2019), 27--89 

\bibitem{ber-ber2}R. J. Berman and B. Berndtsson, The volume of Kähler-Einstein
Fano varieties and convex bodies, J. Reine Angew. Math. 723 (2017),
127--152.

\bibitem{b-b}Berman, R.J.; Boucksom, S: Growth of balls of holomorphic
sections and energy at equilibrium. Invent. Math. Vol. 181, Issue
2 (2010), p. 337

\bibitem{ber-f}Berman, R.J; Freixas i Montplet: G: An arithmetic
Hilbert-Samuel theorem for singular hermitian line bundles and cusp
forms. Compositio Mathematica. Vol. 150/Issue 10 (2014)

\bibitem{bdl1}R J.Berman; T. Darvas; C.H. Lu: Convexity of the extended
K-energy and the long time behavior of the Calabi flow. Geom. and
Topol.21(2017), no. 5, 2945--2988

\bibitem{bern} Berndtsson, B: Complex integrals and Kuperberg's proof
of the Bourgain-Milman theorem Advances in Mathematics. Vol. 388,
p. 107927, 2021. 

\bibitem{bchm}C. Birkar, P. Cascini, C. D. Hacon; J. McKernan: Existence
of minimal models for varieties of log general type, J. Am. Math.
Soc., 23 (2010), 405--468.

\bibitem{b-x}Harold Blum and Chenyang Xu: Uniqueness of K-polystable
degenerations of Fano varieties: Annals of Math., Vol. 190, No. 2
(September 2019), pp. 609-65

\bibitem{bo1}J-B. Bost, Semistability and height of cycles, Invent.
Math. vol. 118, 223-253 (1994). 

\bibitem{bo2}J-B. Bost, Intrinsic heights of stable varieties and
abelian varieties, Duke Math. vol. 82, 21-70 (1996).

\bibitem{b-g-s}Bost, J.-B.; Gillet, H.; Soul\'{ }e, C.: Heights of
projective varieties and positive Green forms. J. Amer. Math. Soc.
7 (1994), no. 4, 903--102

\bibitem{b-h-j}S. Boucksom, T. Hisamoto and M. Jonsson. Uniform K-stability,
Duistermaat-Heckman measures and singularities of pairs. Ann. Inst.
Fourier 67 (2017), 743--841

\bibitem{b-e}S. Boucksom, D. Eriksson, Spaces of norms, determinant
of cohomology and Fekete points in non-Archimedean geometry. Advances
in Math. Vol. 378, 12 (2021)

\bibitem{b-g-p-s} Burgos Gil, José Ignacio ; Philippon, Patrice ;
Sombra, Martín: Arithmetic geometry of toric varieties. Metrics, measures
and heights. Astérisque, no. 360 (2014) , 228 p.

\bibitem{b-n-p-s}J. I. Burgos Gil, A. Moriwaki, P. Philippon \& M.
Sombra: Arithmetic positivity on toric varieties. J. Algebr. Geom.
25 (2016), no. 2, p. 201-272

\bibitem{b-k-s}Károly Böröczky, Jr., János Kollár and Tamás Szamuely
(editors). Higher dimensional varieties and rational points. Papers
from the Summer School and Conference held in Budapest, September
3--21, 2001. Edited by Bolyai Society Mathematical Studies, 12. Springer-Verlag,
Berlin

\bibitem{c-m}J.Cassaigne;V.Maillot. Hauter des hypersurfaces et fonctions
zeta d\textquoteright Igusa. Journal of Number Thoery, 83:226--255,
2000.

\bibitem{ch}H. Chen: Differentiability of the arithmetic volume function.
J. of the London Mathematical Society, Volume 84, Issue 2, October
2011, Pages 365--384,

\bibitem{ch0}Chen, X.X: On the lower bound of the Mabuchi energy
and its application , Int. Math. Res.Not. 2000, no. 12, 607-623

\bibitem{c-d-s}X.X Chen, S. Donaldson, S. Sun. Kähler-Einstein metrics
on Fano manifolds, I, II, III. J. Amer. Math. Soc. 28 (2015).

\bibitem{c-l}Chambert-Loir, A. Lectures on height zeta functions:
at the con- fluence of algebraic geometry, algebraic number theory,
and analysis. In Algebraic and analytic aspects of zeta functions
and L-functions, vol. 21 of MSJ Mem. Math. Soc. Japan, Tokyo, 2010,
pp. 17--49

\bibitem{c-t}A. Chambert-Loir and A. Thuillier, \textquoteleft Mesures
de Mahler et\'{ }equidistribution logarithmique\textquoteright , Ann.
Inst. Fourier (Grenoble) 59 (2009) 977--1014.

\bibitem{c-p}Codogni, G., Patakfalvi, Z. Positivity of the CM line
bundle for families of K-stable klt Fano varieties. Invent. math.
223, 811--894 (2021).

\bibitem{c-g-s}D Coman, V Guedj, S Sahin: Toric pluripotential theory.
Annales Polonici Mathematici 123 (2019), 215-242

\bibitem{c-l-s}Cox, D. A.; Little, J. B.; Schenck, H. K.: Toric varieties.
Graduate Studies in Mathematics, 124. American Mathematical Society,
Providence, RI (201

\bibitem{de}O. Debarre, Fano varieties, in: Higher dimensional varieties
and rational points (Budapest 2001), Bolyai Soc. Math. Stud. 12, Springer-Verlag,
Berlin (2003), 93--13

\bibitem{del}P. Deligne. Preuve des conjectures de Tate et de Shafarevitch
(d\textquoteright après G. Faltings). In: Astérisque (1985). Seminar
Bourbaki, Vol. 1983/84, pp. 25--41 

\bibitem{di}W.-Y. Ding, Remarks on the existence problem of positive
Kähler--Einstein metrics, Math. Ann. 282 (1988), 463--471.

\bibitem{d-t}W.-Y. Ding and G. Tian, Kähler--Einstein metrics and
the generalized Futaki invariant, Invent. Math. 110 (1992), no. 2,
315--335

\bibitem{do1}Donaldson, S.K. Scalar curvature and stability of toric
varieties. J. Diff. Geom. 62 (2002), 289-349

\bibitem{do2}Donaldson, S.K.: Scalar curvature and projective embeddings.
II. Q. J. Math. 56 (2005), no. 3, 345--356.

\bibitem{e-j0}A.-S. Elsenhans, J. Jahnel: On the Smallest Point on
a Diagonal Quartic Threefold. J. Ramanujan Math. Soc. 22 (2007), 189--204

\bibitem{fa00}Faltings, G: Endlichkeitssätze f ür abelsche Varietäten
über Zahlkörpern, Invent. Math (1983)

\bibitem{fa0}Faltings, G Calculus on arithmetic surfaces, Ann. of
Math. 119 (1984), 387-424.

\bibitem{fa}Faltings, G.: Diophantine approximation on abelian varieties,
Ann. of Math. 119\textquoteright{} (1991) 549- 576

\bibitem{fa2}Faltings, G.: Endlichkeitssatze fur abelsche Varietaten
iiber Zahlkorpern. Invent. Math. 73 (1983)349-366.

\bibitem{f-m-t}J. Franke, Y. I. Manin and Y. Tschinkel -- \textquotedblleft Rational
points of bounded height on Fano varieties\textquotedblright , Invent.
Math. 95 (1989), no. 2, p. 421--435

\bibitem{fu}K. Fujita. Optimal bounds for the volumes of Kähler-Einstein
Fano Manifolds. Amer. J. Math. 140 (2018), 391--414

\bibitem{fu2}Fujita, K: A valuative criterion for uniform K-stability
of Q-Fano varieties. J. Reine Angew. Math. 751 (2019), 309--338. 

\bibitem{fu3}K. Fujita, K-stability of log Fano hyperplane arrangements,
arXiv:1709.08213 (2017)

\bibitem{h-o}Hattori, Y. Odaka: Minimization of Arakelov K-energy
for many cases. Preprint at https://arxiv.org/abs/2211.03415.

\bibitem{ko0}Kollar, J: Singularities of pairs. Algebraic geometry---Santa
Cruz 1995, 221--287.

\bibitem{ku}Kuperberg, G: From the Mahler conjecture to Gauss linking
integrals. Geometric and Functional Analysis. 18 (3): 870--892. 

\bibitem{g-s}H. Gillet and C. Soulé, Characteristic classes for algebraic
vector bundles with Hermitian metric. II, Ann. of Math. (2) 131 (1990),
205--238.

\bibitem{g-s2}H. Gillet and C. Soulé: An arithmetic Riemann-Roch
theorem, Invent. Math. 110 (1992), 473-54

\bibitem{k-k}C. Kaiser;K. K\"{ }ohler : A fixed point formula of
Lefschetz type in Arakelov geometry III: representations of Chevalley
schemes and heights of flag varieties, Invent. Math. 147 (2002), 633-669.

\bibitem{ko}J. Koll\'{ }ar: Singularities of the minimal model program,
Cambridge Tracts in Mathematics, vol. 200, Cambridge University Press,
Cambridge, 2013, With the collaboration of S\'{ }andor Kov\'{ }acs.

\bibitem{li}C. Li. Yau--Tian--Donaldson correspondence for K-semistable
Fano manifolds. J. Reine Angew. Math. 733 (2017), 55--85.

\bibitem{li1}Li, C: G-uniform stability and Kähler-Einstein metrics
on Fano varieties. Inventiones (to appear). ArXiv: 1907.09399, 2019

\bibitem{l-w-x0}C. Li, X. Wang, and C. Xu, Quasi-projectivity of
the moduli space of smooth Kähler-Einstein Fano manifolds, Ann. Sci.\'{ }Ec.
Norm. Sup\'{ }er. (4) 51 no. 3 (2018), 739--772

\bibitem{l-x}C. Li;C. Xu: Special test configuration and K-stability
of Fano varieties. Ann. of Math.180(2014), no. 1, 197--232

\bibitem{l-w-x}Chi Li, Xiaowei Wang and Chenyang Xu: Algebraicity
of the metric tangent cones and equivariant K-stability. J. Amer.
Math. Soc. 34 (2021), 1175-1214 

\bibitem{liu}Y Liu: The volume of singular Kähler-Einstein Fano varieties,
Compos. Math. 154 (2018), no. 6, 1131--1158. 

\bibitem{l-z}Y. Liu and Z. Zhuang: Characterization of projective
spaces by Seshadri constants. Math. Z. 289.1--2 (2018), pp. 25--38.

\bibitem{l-x-z}Y Liu, C Xu, Z Zhuang: Finite generation for valuations
computing stability thresholds and applications to K-stability. Annals
of Math (to appear). Preprint at arXiv:2102.09405 (2021)

\bibitem{l-m}T Loher, D Masser: Uniformly counting points of bounded
height. Acta Arithmetica, 2004

\bibitem{mab}Mabuchi, T: K-energy maps integrating Futaki invariants.
Tohoku Math. J. (2) 38 (1986),no. 4, 575--593

\bibitem{mah}Mahler, K., EinÜbertragungsprinzip für konvexe körper.
\v{ }Casopis Pest Mat. Fys., Vol. 68, (1939), 93--102.

\bibitem{ma2}Maillot, V.: Un calcul de Schubert arithm\'{ }etique.
Duke Math. J. 80, 195--221 (1995)

\bibitem{ma}V. Maillot, Géométrie d\textquoteright Arakelov des variétés
toriques et fibrés en droites intégrables, Mém. Soc. Math. France,
vol. 80, Soc. Math. France, 2000.

\bibitem{man}Y. Manin, New dimensions in geometry, Workshop at Bonn
1984 (Bonn, 1984), 59-101. Lecture Notes in Mathematics vol. 1111,
Springer (1985

\bibitem{m-s}J Moraga, H Süß: Bounding toric singularities with normalized
volume. arXiv:2111.01738, 2021

\bibitem{Mo}Moriwaki, A.: The continuity of Deligne\textquoteright s
pairing. Internat. Math. Res. Notices 19, 1057-- 1066 (1999)

\bibitem{ob} Mikkel Øbro, An algorithm for the classification of
smooth fano polytopes, preprint, arXiv:0704.0049, 2007. Database at
http://www.grdb.co.uk/forms/toricsmooth

\bibitem{na}Nakashima, T; Takeda, Y: Chern classes of vector bundles
on arithmetic varieties. Pacific J. Math. 176 (1996), no. 1, 205--216.

\bibitem{od0}Odaka, Y: The GIT stability of polarized varieties via
discrepancy\textquotedblright , Ann. Math. 177 (2013), no. 2, p. 645-66

\bibitem{od1}Odaka, Y: A generalization of the Ross--Thomas slope
theory. Osaka Journal of Mathematics, 2013

\bibitem{od1b}Odaka, Y: On the moduli of Kähler-Einstein Fano manifolds.arXiv
1211.4833. (2012) 

\bibitem{o}Odaka, Y: Canonical Kähler metrics and arithmetics: Generalizing
Faltings heights. Kyoto J. Math. 58(2) (2018): 243-288

\bibitem{od2}Odaka, Y: On the K-stability and moduli of varieties
- their connection with Kähler-Einstein metrics - Sugaku 72 (2020),
no.4. Published by Math Society of Japan.

\bibitem{pa}Parshin, A.N.: The Bogomolov-Yau-Miyaoka inequality for
arithmetical surfaces and its applications. In Seminaire de theorie
des hombres de Paris. Progr. Maths. 75 1988 Basel Boston: Birkhfiuser,
pp. 299 312

\bibitem{pey}Peyre, E: Hauteurs et mesures de Tamagawa sur les variétés
de Fano. Duke Math. J. 79(1): 101-218 (July 1995).

\bibitem{pey2}{]} E. Peyre, Points de hauteur bornée, topologie adélique
et mesures de Tamagawa, in: Les XXIIèmes Journées Arithmetiques, Lille,
2001, J. Théor. Nombres Bordeaux 15 (1) (2003) 319--349

\bibitem{r-t}Julius Ross and Richard Thomas. A study of the Hilbert-Mumford
criterion for the stability of projective varieties. J. Algebraic
Geom., 16(2):201--255, 2007.

\bibitem{r-l-v}R. Rumely, C. F. Lau and R. Varley: Existence of the
sectional capacity. Mem. Amer. Math. Soc. 145 (2000) viii+130.

\bibitem{sou0}C. Soulé: A vanishing theorem on arithmetic surfaces,
Inventiones Math., Vol. 116, 1994, pp. 544-899.

\bibitem{sou}C. Soulé: Chapter I: Arithmetic Intersection. In book:
Arakelov Geometry and Diophantine Applications (pp.9-36).

\bibitem{sou00}C. Soule, D. Abramovich, J.-F. Burnol, and J. Kramer,
Lectures on Arakelov geometry, Cambridge Stud. Adv. Math., vol. 33,
Cambridge Univ. Press, Cambridge, 1992

\bibitem{s-s}Spotti, C; Sun, S: Explicit Gromov-Hausdorff compactifications
of moduli spaces of Kähler-Einstein Fano manifolds. Pure Appl. Math.
Q. 13 (2017), no. 3, 477--515. 

\bibitem{ta1}H. Tamvakis : Schubert calculus on the arithmetic Grassmannian,
Duke Math. J. 98 (1999), 421--443.

\bibitem{ta2} H. Tamvakis : Arakelov theory of the Lagrangian Grassmannian,
J. reine angew. Math. 516 (1999),207--223. 

\bibitem{ta3} H. Tamvakis : Height formulas for homogeneous varieties,
Michigan Math. J. 48 (2000), 593--610.

\bibitem{ti0}G. Tian Transcendental Methods in Algebraic Geometry,
Lecture Notes in Math., vol. 1646, Cetraro, 1994 (1996), pp. 143-185

\bibitem{ti}Tian, G: Kähler-Einstein metrics with positive scalar
curvature, Invent. Math.130(1997),no. 1, 1--37

\bibitem{ti-ty}S.R. Tims, J.A. Tyrrell, Approximate evaluation of
Euler's constant, Math. Gaz. 55 (1971), no. 391, 65-67

\bibitem{ts}Tschinkel, Y.: Algebraic varieties with many rational
points, Arithmetic Geometry, Clay Math. Proc., vol. 8. Amer. Math.
Soc. (2009)

\bibitem{v}Vojta, P: Diophantine inequalities and Arakelov theory
In: S. Lang, Introduction to Arakelov Theory, Springer, 1988, pp.
155--178. 

\bibitem{w-z}Wang (X.), Zhu (X.). --- Kähler-Ricci solitons on toric
manifolds with positive first Chern class, Advances in Mathematics
188, p. 87-103 (2004).

\bibitem{w}Wang, X.; Height and GIT weight. Math. Res. Lett. 19 (2012),
no. 04, 909--926

\bibitem{x}Xu, C: K-stability of Fano varieties: an algebro-geometric
approach. EMS Surv. Math. Sci. 8 (2021), no. 1-2, 265--354.

\bibitem{Zh0}Zhang, S: Positive line bundles on arithmetic varieties.
J. Amer. Math. Soc. 8 (1995), 187-221 

\bibitem{zh0b}S.-W. Zhang, Small points and adelic metrics, Journal
of Algebraic Geometry 4(2) (1995).

\bibitem{zh1}S. Zhang, Heights and reduction of semi-stable varieties,
Compositio Math. vol. 104, 77-105(1996

\bibitem{zhu}Ziquan Zhuang: Optimal destabilizing centers and equivariant
K-stability. Inventiones (to appear). https://arxiv.org/abs/2004.09413
\end{thebibliography}
\end{document}